\let\cal=\mathcal
\def\N{{\mathbb N}}
\def\Z{{\mathbb Z}}
\def\R{{\mathbb R}}
\def\P{{\mathbb P}}
\def\E{{\mathbb E}}
\def\eps{\epsilon}
\newtheorem{thm}{Theorem}[section]
\newtheorem{cor}[thm]{Corollary}
\newtheorem{lem}[thm]{Lemma}
\newtheorem{prop}[thm]{Proposition}
\theoremstyle{definition}
\newtheorem{de}[thm]{Definition}
\theoremstyle{remark}
\newtheorem{rem}[thm]{Remark}
\newtheorem{exam}[thm]{Example}
\numberwithin{equation}{section}
\newcommand{\rmd}{{\rm d}}
\begin{document}

\title[Wasserstein convergence rates in the quenched invariance principle]{Quenched invariance principle with a rate for random dynamical systems} 

\author{Zhenxin Liu}
\address{Z. Liu: School of Mathematical Sciences, Dalian University of Technology, Dalian
116024, P. R. China}
\email{zxliu@dlut.edu.cn}

\author{Benoit Saussol}
\address{B. Saussol: Aix Marseille Universit\'e, CNRS, I2M, 13009 Marseille, France}
\email{benoit.saussol@univ-amu.fr}

\author{Sandro Vaienti}
\address{S. Vaienti: Universit\'e de Toulon,  Aix Marseille Universit\'e, CNRS, CPT, 13009 Marseille, France.}
\email{vaienti@cpt.univ-mrs.fr}

\author{Zhe Wang}
\address{Z. Wang: School of Mathematical Sciences, Dalian University of Technology, Dalian
116024, P. R. China}
 \email{zwangmath@hotmail.com}

\date{June 15, 2025}

\subjclass[2010]{37H30, 60F17, 37A50, 60B10}

\keywords{Quenched invariance principles, random Young towers, Wasserstein distance, random dynamical systems}

\begin{abstract}
In this paper, we consider the quenched invariance principle for random Young towers driven by an ergodic system. In particular, we obtain the Wassertein convergence rate in the quenched invariance principle.
As a key ingredient,  we derive a new martingale-coboundary decomposition for the random tower map, which provides a good control over sums of squares of the approximating martingale. We apply our results to a class of random dynamical systems that admit a random Young tower, such as independent and identically distributed (i.i.d.) translations of Viana maps, intermittent  maps of the interval and
small random perturbations of Anosov maps with an ergodic driving system.
\end{abstract}

\maketitle

\section{Introduction}
\setcounter{equation}{0}

In this paper, we consider statistical properties for a class of random dynamical systems. In such systems, the evolution can be seen as a random composition of maps acting on the same space $X$, where maps are chosen according to a stationary process. Formally, a random dynamical system can be described by a skew product $T:\Omega\times X\to \Omega\times X$ given by
\[
T(\omega,x):=(\sigma \omega, f_\omega(x)),
\]
where $f_\omega:X\to X$ is a family of measurable maps and $\sigma:\Omega\to \Omega$ is a measure-preserving transformation on a probability space $(\Omega, \cal F, \P)$. Each $f_\omega$ is often called the fiber map and $\sigma$ is called the base map or the driving system. The random orbits are obtained by the iteration of the maps $f_\omega^n=f_{\sigma^{n-1}\omega}\circ \cdots\circ f_{\sigma\omega}\circ f_\omega$. We refer to \cite{A98,K86} for a systematic introduction to these systems, where in \cite{K86}, Kifer considered the i.i.d. case, i.e. the maps are chosen independently with identical distributions and in \cite{A98}, Arnold considered the general case, iterating a stationary stochastic sequence of maps.

Over the past few decades, there has been a remarkable interest in studying statistical limit theorems for random dynamical systems
\cite{ANV15,ABRV23,BBR19,BBMD02,CT23,Du15,H22,LV18,Su22}. Among these results, a random Young tower (RYT), a random extension of the Young tower \cite{Y98,Y99}, is an important tool for studying limit theorems, such as the almost sure mixing rates and the quenched almost sure invariance principle, for random dynamical systems with weak hyperbolicity. The RYT was first constructed in \cite{BBMD02} to obtain an exponential almost sure mixing rate for
i.i.d. translations of unimodal maps. It was then extended to a more general RYT in
\cite{Du15} and applied to a larger class of unimodal maps. In \cite{BBR19}, it was adapted to i.i.d.
perturbations of the intermittent maps introduced in \cite{LSV}  to obtain a polynomial almost sure mixing rate under additional assumptions.
More recently, Alves et al \cite{ABRV23} dropped the i.i.d. assumption and studied the RYT driven by an ergodic automorphism, proving quenched exponential correlations decay for tower maps admitting exponential tails.

Regarding quenched limit theorems, Su \cite{Su22} obtained the quenched almost sure invariance principle (QASIP) for random Young towers in the i.i.d. case. Hafouta~\cite{H22} proved the Berry-Esseen theorem, the local central limit theorem, and several versions of large deviation principles for i.i.d. random non-uniformly expanding or hyperbolic maps with exponential first return times.

In the context of general ergodic driving systems, numerous significant contributions have been made \cite{AFGV21,AFGV23a,AFGV23b,B99,DFGV18,DFGV18b,DFGV20,DH21}. Buzzi \cite{B99} used the Birkhoff cones technique to obtain the quenched exponential decay of correlation for random Lasota-Yorke maps. Under the similar setting, Dragi\v{c}evi\'{c} et al \cite{DFGV18} obtained the QASIP for random piecewise expanding map, asking the family of maps uniformly good. In \cite{DFGV18b,DFGV20,DH21}, the authors extended Gou\"ezel's spectral approach to obtain the (vector-valued) QASIP for certain class of random dynamical systems. Notably, these results we mentioned above do not ask any mixing assumption on the driving system.

In the present paper, we consider the quenched invariance principle for random Young towers driven by an ergodic system. In addition, we obtain the convergence rate in the Wasserstein distance.  For $p\ge 1$, we denote by $\cal W_p(P,Q)$ the Wasserstein distance between the distributions $P$ and $Q$ on a Polish space $(\cal X,d)$ (see \cite[Definition~6.1]{V09}):
\[
\cal W_p(P,Q)= \inf \{ [E {d(X,Y)}^p]^{1/p}; \hbox{law} (X)=P, \hbox{law} (Y)=Q \}.
\]
We refer to \cite[Chapter~6]{V09} for further details on the Wasserstein distance.

Other early works on the convergence rates in the weak invariance principle for deterministic dynamical systems go back to Antoniou and Melbourne \cite{A18,AM19}. More recently, Liu and Wang obtained the Wasserstein convergence rates in the invariance principle for deterministic systems \cite{LW24} and sequential dynamical systems \cite{LWnon}. Paviato \cite{P24} obtained the convergence rates in the multidimensional weak invariance principle for discrete- and continuous-time dynamical systems. Subsequently, Fleming-V\'azquez \cite{F25} derived the Wasserstein convergence rates in the multidimensional weak invariance principle for more general nonuniformly hyperbolic systems.

In random dynamical systems, to estimate the convergence rate in the quenched invariance principle,
we redefine a self-normalized continuous process $\overline W_{n}^{\omega}$ as in \eqref{wnt} and
obtain the Wasserstein convergence rate $O(n^{-\frac{1}{4}+\delta})$, where $\delta$ depends on the tails of the return times. When the return time
has a  (stretched) exponential tail, $\delta$ can be arbitrarily small.
We point out that the convergence rate we obtain is essentially optimal under the methods used, as it is well known that one cannot get a better result than $O(n^{-\frac{1}{4}})$ by means of the Skorokhod embedding theorem; see Remark~\ref{wass} for some details.

To derive the convergence rate, we employ techniques developed for stationary systems, particularly the martingale approximation method and the martingale version of the Skorokhod embedding theorem. The key ingredient is a secondary martingale-coboundary decomposition for the random tower map, which provides a good control over the sums of squares of the approximating martingale. To the best of our knowledge, this approach was first proposed in \cite{KKM18} to apply to families of dynamical systems. Then Paviato \cite{P24} generalized it to suspension flows. Here, we adopt a different method, following the ideas in \cite{BW71} and \cite{T05}, to construct the secondary martingale-coboundary decomposition.
Moreover, using the secondary martingale-coboundary decomposition, we obtain the QASIP for random Young towers with an ergodic driving system as an additional product.

The remainder of this paper is organized as follows. In Section 2, we introduce the random Young tower and recall some of its properties. In Section 3, we state main results of this paper.
In Section 4, we introduce the primary martingale-coboundary decomposition and construct a secondary martingale-coboundary decomposition.
In Section 5, we provide several technical lemmas. Based on these preparations, we proof the main results in Section 6.  In the last section, we present some examples to illustrate our results.

Throughout the paper, \\
(1) $1_A$ denotes the indicator function of measurable set $A$. \\
(2) We write $C_a$ to denote constants depending on $a$ and it may change from line to line.  \\
(3) $a_n=O_a(b_n)$ and $a_n\ll_a b_n$ mean that there exists a constant $C_a>0$ such that $a_n\le C_a b_n$ for all $n\ge 1$.\\
(4) $\E_{\mu_\omega}$ means the expectation with respect to $\mu_\omega$; $\E$ means the expectation of $\P$; $E$ means the expectation on an abstract probability space.\\
(5) We use $\rightarrow_{w}$ to denote the weak convergence in the sense of probability measures. \\
(6) $C[0,1]$ is the space of all continuous functions on $[0,1]$ equipped with the supremum distance $d$, that is
\[
d(x,y):=\sup_{t\in [0,1]}|x(t)-y(t)|, \quad x,y\in C[0,1].
\]
(7) $\P_X$ denotes the law/distribution of random variable $X$ and $X=_d Y$ means $X, Y$ sharing the same distribution.\\
(8) We use the notation $\mathcal{W}_p(X,Y)$ to mean $\mathcal{W}_p(\P_X, \P_Y)$ for the sake of simplicity.\\

\section{The setup}

\subsection{Random Young towers}
Let $(\Omega, \cal F, \P)$ be a probability space and $\sigma:\Omega\to \Omega$ an invertible ergodic measure-preserving transformation. Let $(X,\cal B)$ be a measurable space and $\Lambda\subset X$ a measurable set with finite measure $m$. Let $f_\omega:X\to X$ be a family of non-singular transformations. We say that $f_\omega$ admits a random Young tower if for almost every (a.e.) $\omega\in \Omega$ there exists a countable partition $\{\Lambda_j(\omega)\}_j$ of $\Lambda$ and a measurable return-time function $R_\omega:\Lambda\to \N$ such that $R_\omega$ is constant on each
$\Lambda_j(\omega)$ and $f_{\omega}^{R_\omega}(x)=f_{\sigma^{R_\omega(x)-1}\omega}\circ \cdots\circ f_{\sigma\omega}\circ f_\omega(x)\in \Lambda$ for $\P$-a.e. $\omega\in \Omega$ and $m$-a.e. $x\in \Lambda$.

For a.e. $\omega\in \Omega$, define a random tower by
\[
\Delta_\omega:=\Big\{(x,l)\in \Lambda\times \Z_+\big| x\in \bigcup_j\Lambda_j(\sigma^{-l}\omega), l\in\Z_+, 0\le l\le R_{\sigma^{-l}\omega}(x)-1   \Big\}.
\]
Denote by $\Delta_{\omega,l}:=\{(x,l)\in \Delta_\omega\}$ the $l$th level of the tower for $l\ge 1$, which is a copy of
$\{x\in \Lambda | R_{\sigma^{-l}\omega}(x)>l\}$ and $\Delta_{\omega,0}:=\Lambda$.
The random tower map $F_\omega: \Delta_{\omega}\to\Delta_{\sigma\omega}$ is given by
\begin{align*}
  F_\omega(x,l):=
  \begin{cases}
  (x,l+1), &  \hbox {if}\quad l+1<R_{\sigma^{-l}\omega}(x),\\
  (f_{\sigma^{-l}\omega}^{l+1}, 0),  &\hbox{if} \quad l+1=R_{\sigma^{-l}\omega}(x).
  \end{cases}
\end{align*}
The collection $\Delta:=\bigcup_{\omega\in \Omega}\{\omega\}\times \Delta_\omega$ is called a random Young tower. The fibred system
$\{F_\omega\}_{\omega\in \Omega}$ is called the random tower map. The skew product $F:\Delta\to \Delta$ is defined by
$F(\omega,x):=(\sigma\omega, F_\omega x)$.

Notice that $\{\Lambda_j(\sigma^{-l}\omega)\}_j$ induces a partition $\mathcal P_{\omega,l}$ on the $l$th level of the tower $\Delta_\omega$:
\[
\mathcal P_{\omega,l}:=\big\{\Lambda_j(\sigma^{-l}\omega)\big|R_{\omega}|_{\Lambda_j(\sigma^{-l}\omega)}\ge l+1, l\in\Z_+ \big\}.
\]
We also let $\mathcal P_{\omega}$ be the corresponding partitions of $\Delta_\omega$.

Next, we extend $R_\omega$ to $\Delta_\omega$ (still denoted by $R_\omega$) by setting
$R_\omega(x,l)=R_{\sigma^{-l}\omega}(x)-l$. Namely, $R_\omega$ denotes the first return time to the base of the tower $\Delta_{\sigma^{R_\omega}\omega}$.
Also, the reference measure $m$ and the $\sigma$-algebra on
$\Lambda$ naturally lift to $\Delta_\omega$. By abuse of notations we call the lifted measure $m$ and the lifted $\sigma$-algebra $\mathcal B_\omega$. Let $R_\omega^1(x)=R_\omega(x)$ and for $n\ge 2$, $R_\omega^n(x)=R_{\sigma^{R_\omega^{n-1}}\omega}(F_\omega^{n-1}(x))+R_\omega^{n-1}(x)$.
Then we define the separation time $s_\omega:\Delta_\omega\times\Delta_\omega\to \Z_+\cup\{\infty\}$ for a.e. $\omega\in \Omega$ by
\[
s_\omega(x,y):=\inf\{n\ge 0:F_\omega^{R_{\omega}^{n}}(x) \hbox{~and~} F_\omega^{R_{\omega}^{n}}(y) \hbox{~lie~in~distinct~elements~ of~} \mathcal P_{\sigma^{R_{\omega}^{n}}\omega}\}.
\]

We assume that the random Young tower satisfies the following properties.\\
(P1) {\bf Markov:} For each $\Lambda_j(\omega)$, the map $F_\omega^{R_\omega}|_{\Lambda_j(\omega)}:\Lambda_j(\omega)\to \Lambda$
is a bijection.\\
(P2) {\bf Bounded distortion:} There are constants $C>0$ and $\beta\in (0,1)$ such that for a.e. $\omega\in \Omega$ and each $\Lambda_j(\omega)$ the map $F_\omega^{R_\omega}|_{\Lambda_j(\omega)}$ and its inverse are non-singular w.r.t. $m$, and the corresponding Jacobian $JF_\omega^{R_\omega}|_{\Lambda_j(\omega)}$ is positive and for each $x,y\in \Lambda_j(\omega)$, we have
\[
\Big|\frac{JF_\omega^{R_\omega}(x)}{JF_\omega^{R_\omega}(y)}-1\Big|\le C\beta^{s_{\sigma^{R_\omega}\omega}(F_\omega^{R_\omega}(x,0), F_\omega^{R_\omega}(y,0))}.
\]
(P3) {\bf Weak expansion:} $\mathcal P_\omega$ is a generating partition for $F_\omega$, i.e. diameters of the partitions
$\bigvee_{j=0}^{n}F_{\omega}^{-j}\mathcal P_{\sigma^{j}\omega}$ tend to $0$ as $n\to \infty$.\\
(P4) {\bf Aperiodicity:} There are $N\in \N$, $\{\eps_i>0 | i=1,2,\ldots,N\}$ and $\{t_i\in \Z_+|i=1,2,\ldots,N\}$ with g.c.d.$\{t_i\}=1$ such that for a.e. $\omega\in \Omega$, all $1\le i\le N$,
\[
m\{x\in \Lambda|R_\omega(x)=t_i\}>\eps_i.
\]
(P5) {\bf Return-time asymptotics:} There are constants $C>0$, $a>1$, $b\ge 0$, $u>0$, $v>0$, a full measure subset $\Omega_1\subset\Omega$, and a random variable $n_1:\Omega_1\to \N$ such that
\begin{align*}
\begin{cases}
  m\{x\in \Lambda|R_\omega(x)>n\}\le C\frac{(\log n)^b}{n^a} \hbox{~whenever~} n\ge n_1(\omega),\\
  \P\{n_1(\omega)>n\}\le Ce^{-un^v}.
  \end{cases}
\end{align*}
(P6) {\bf Finiteness:} There exists a constant $M>0$ such that $m(\Delta_\omega)\le M$ for all $\omega\in \Omega$.\\
(P7) {\bf Annealed return-time asymptotics:} There are constants $C>0$, $\hat b\ge 0$, and $a>1$ such that
$\int_\Omega m\{x\in \Lambda|R_\omega(x)=n\}\rmd\P\le C\frac{(\log n)^{\hat b}}{n^{a+1}}$.

In addition, we also need to assume the quenched decay of correlation. We first introduce some function spaces. Below we let constants $u>0$, $v>0$, $a>1$, $b\ge 0$, $0<\beta<1$ be as in (P2) and (P4) above and set
\begin{align*}
\mathcal F_{\beta}^{+}:=\Big\{\phi:\Delta\to \R\big|&\exists C'_\phi>0, \forall I_\omega\in \mathcal P_\omega, \hbox{either~}
\phi_\omega|_{I_\omega}\equiv 0, \\
&\hbox{or~} \phi_\omega|_{I_\omega}>0 \hbox{~and~} \Big|\log\frac{\phi_\omega(x)}{\phi_\omega(y)}\Big|\le  C'_\phi
\beta^{s_\omega(x,y)}, \forall x,y\in I_\omega    \Big\},
\end{align*}
where $C'_\phi$ is called a Lipschitz constant for $\phi$.

Let $K:\Omega\to \R_+$ be a random variable with $\inf_{\omega\in\Omega} K_\omega>0$ and
\begin{align}\label{k}
\P\{\omega|K_\omega>n\}\le e^{-un^v}.
\end{align}
Define the space of bounded random Lipschitz functions as
\begin{align*}
\mathcal F_{\beta}^{K}:=\{&\phi:\Delta\to \R\big|\hbox{there~exist~constants~} M_\phi \hbox{~and~} C_\phi>0 \hbox{~such~that~for~a.e.~} \omega\in \Omega, \\
&\sup_{x\in \Delta_\omega}|\phi_\omega(x)|\le  M_\phi, \hbox{~and~} |\phi_\omega(x)-\phi_\omega(y)|\le  C_\phi K_\omega\beta^{s_\omega(x,y)}, \forall x,y\in \Delta_\omega\},
\end{align*}
where $C_\phi$ is also called a Lipschitz constant for $\phi$.

Throughout the paper, we define $\phi_\omega(\cdot):=\phi(\omega,\cdot)$ for any function $\phi:\bigcup_{\omega\in \Omega}\{\omega\}\times \Delta_\omega\to \R$.
\begin{prop}\label{suph}
For a.e. $\omega\in \Omega$, there is a family of $\{\mu_{\sigma^k \omega}\}$, which is equivariant  and absolutely continuous w.r.t. $m$, namely, $(F_{\sigma^k \omega})_*\mu_{\sigma^k \omega}=\mu_{\sigma^{k+1}\omega}$ with
$\mu_{\omega}=h_\omega m$. Moreover, $h_\omega\in \mathcal F_{\beta}^{+}\cap \mathcal F_{\beta}^{K_\omega}$ and
$\operatorname*{ess\,sup}_{\omega\in \Omega, x\in \Delta_\omega} h_\omega(x)<\infty$.
\end{prop}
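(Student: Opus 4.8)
The plan is to obtain the equivariant densities as a limit of transfer-operator iterates and to read off the regularity and the uniform bound from the layered structure of the tower, following the standard random Young-tower machinery of \cite{BBMD02,Du15,ABRV23}.

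For a.e.\ $\omega$ let $\mathcal L_\omega\colon L^1(\Delta_\omega,m)\to L^1(\Delta_{\sigma\omega},m)$ be the transfer (Perron--Frobenius) operator of $F_\omega$ with respect to $m$, characterized by $\int_{\Delta_{\sigma\omega}}(\mathcal L_\omega\phi)\,\psi\,\rmd m=\int_{\Delta_\omega}\phi\,(\psi\circ F_\omega)\,\rmd m$. A family $\{\mu_{\sigma^k\omega}\}$ as in the statement, with $\mu_\omega=h_\omega m$, is precisely a family of nonnegative densities with $\mathcal L_\omega h_\omega=h_{\sigma\omega}$ a.e. I would build it by a Birkhoff cones argument: using the Markov property (P1) and the bounded distortion (P2) one checks in the usual way that, for a suitable constant $L=L(C,\beta)$, the cone $\mathcal C_\omega:=\{0\le\phi\in\mathcal F_\beta^{+}:\ C'_\phi\le L\}$ is mapped by $\mathcal L_\omega$ into $\mathcal C_{\sigma\omega}$; the aperiodicity (P4), the generating property (P3) and the finiteness (P6) then ensure, by the classical Young-tower analysis, that some iterate of the transfer cocycle contracts the projective (Hilbert) metric of the cone. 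Hence $\mathcal L_{\sigma^{-n}\omega}^{\,n}(1_\Lambda/m(\Lambda))$ converges, and its limit $h_\omega\in\mathcal C_\omega\subset\mathcal F_\beta^{+}$, with an $\omega$-free Lipschitz constant, is the desired equivariant density, unique up to a multiplicative factor. Since $(F_\omega)_*$ preserves total mass and $\sigma$ is ergodic, that factor is $\omega$-independent and we normalize each $\mu_\omega$ to a probability measure.

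Granting for a moment a uniform bound $\sup_x h_\omega(x)\le M$ for a.e.\ $\omega$, the membership $h_\omega\in\mathcal F_\beta^{K_\omega}$ is immediate: for $x,y$ in one element of $\mathcal P_\omega$, $|\log(h_\omega(x)/h_\omega(y))|\le L\beta^{s_\omega(x,y)}$ gives $|h_\omega(x)-h_\omega(y)|=h_\omega(x)|1-h_\omega(y)/h_\omega(x)|\le M(e^{L}-1)\beta^{s_\omega(x,y)}$, while for $x,y$ in distinct elements $s_\omega(x,y)=0$ and $|h_\omega(x)-h_\omega(y)|\le 2M\le(2M/\inf_\omega K_\omega)K_\omega$; thus $h_\omega\in\mathcal F_\beta^{K_\omega}$ with $M_\phi,C_\phi$ independent of $\omega$. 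The uniform bound itself is the real content, and I would get it from the tower geometry. On the climbing part $F_\omega$ has a single inverse branch and Jacobian $1$ with respect to $m$, so $\mathcal L_\omega h_\omega=h_{\sigma\omega}$ forces $h_{\sigma\omega}(x,l)=h_\omega(x,l-1)$ for $l\ge 1$, hence $h_\omega(x,l)=h_{\sigma^{-l}\omega}(x,0)$ by iteration. It therefore suffices to bound $h_\omega|_{\Delta_{\omega,0}}=h_\omega|_\Lambda$ uniformly. By Kac's formula $\mu_\omega(\,\cdot\cap\Lambda)/\mu_\omega(\Lambda)$ is the invariant measure of the random return map $F_\omega^{R_\omega}\colon\Lambda\to\Lambda$, which is Gibbs--Markov by (P1)--(P2), so its density lies between two $\omega$-free constants by the standard Gibbs--Markov estimate; since $\mu_\omega(\Lambda)\le\mu_\omega(\Delta_\omega)=1$ this yields $h_\omega|_\Lambda\le M$, and then $h_\omega(x,l)=h_{\sigma^{-l}\omega}(x,0)\le M$ for all $l$.

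The main obstacle I anticipate is uniformity in $\omega$ throughout, rather than mere a.e.\ finiteness with an $\omega$-dependent constant: one must see that the cone, the number of steps needed for a projective-metric contraction, and the Gibbs--Markov bounds on the return-map density are all $\omega$-independent. This is exactly where the hypotheses enter essentially: (P1)--(P2) provide distortion control with $\omega$-free constants, (P4) gives $\omega$-uniform aperiodicity, (P6) keeps $m(\Delta_\omega)$ and the sums $\sum_{l\ge0}m\{R_{\sigma^{-l}\omega}>l\}$ bounded uniformly, and (P5), (P7) guarantee the towers are genuinely finite with the return-time tails effective past the random time $n_1(\omega)$, whose exponential tail --- along with that of $K_\omega$ in \eqref{k} --- is what lets one pass from pathwise to uniform estimates. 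The remaining steps are the by-now-standard random Young-tower bookkeeping as in \cite{BBMD02,Du15,ABRV23}.
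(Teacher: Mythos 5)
The paper does not actually prove this proposition: its ``proof'' is a one-line citation of \cite[Theorem~4.1]{BBR19} and \cite[Theorem~2.1]{ABRV23}, and your sketch reproduces the standard construction carried out in those references --- the equivariant density as a limit of transfer-operator iterates in a distortion cone, the identity $h_\omega(x,l)=h_{\sigma^{-l}\omega}(x,0)$ coming from the unit Jacobian on the climbing part, and a uniform log-bound for the density on the base $\Lambda$ yielding the essential supremum bound. Your outline is therefore correct in substance and takes essentially the same route as the cited proofs, deferring the same ``standard bookkeeping'' that the paper defers to the literature.
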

\begin{proof}
See for example \cite[Theorem~4.1]{BBR19} and \cite[Theorem~2.1]{ABRV23}.
\end{proof}

(P8) {\bf Quenched decay of correlation:} For any small $\delta\in (0,a-1)$, there exists a full measure set $\Omega_0\subset \Omega$ and an integrable random variable $C_\omega$ on $\Omega_0$ s.t. for every $\phi\in L^{\infty}(\Delta, \mu)$, $\psi\in \mathcal F_{\beta}^{K}$, there is a constant
$C_{\phi,\psi}$ s.t. for every $\omega\in \Omega_0$,
\begin{align}\label{dec}
\Big|\int(\phi_{\sigma^n \omega}\circ F_{\omega}^{n})\psi_\omega \rmd m-\int\phi_{\sigma^n \omega}\rmd\mu_{\sigma^n \omega}\int \psi_\omega \rmd m  \Big|\le C_\omega C_{\phi,\psi}n^{-(a-1-\delta)}.
\end{align}

\begin{rem}\label{com}
We stress that (P8) is crucial for Proposition~\ref{ann-decay}. When the driving system is a Bernoulli scheme, by \cite[Theorem~4.2]{BBR19}, if the RYT satisfies the properties (P1)--(P7), then (P8) holds. In particular, if the return time of the RYT has an exponential tail, by \cite[Theorem~1.2.6]{Du15}, (P8) holds under the properties (P1)--(P5), without (P6)--(P7). When the driving system is an ergodic automorphism, under the properties (P1)--(P4), and a condition that $m\{x\in \Lambda|R_\omega(x)>n\}\le Ce^{-\theta n}$, Alves et al \cite{ABRV23} proved quenched exponential correlations decay for tower maps admitting exponential tails, but there is no regularity information on $C_\omega$.
\end{rem}

\subsection{Preliminaries}

\begin{de}[Random transfer operators]
For a.e. $\omega\in \Omega$, $P_\omega$ is called a random transfer operator for $F_\omega:\Delta_\omega\to \Delta_{\sigma\omega}$ if for any $\phi_{\sigma\omega}\in L^\infty(\Delta_{\sigma\omega}, \mu_{\sigma\omega})$, $\psi_\omega\in L^1(\Delta_\omega, \mu_\omega)$,
\begin{align}\label{tran}
\int \psi_\omega\phi_{\sigma\omega}\circ F_\omega \rmd\mu_\omega=\int P_\omega(\psi_\omega)\phi_{\sigma\omega}\rmd\mu_{
\sigma\omega}.
\end{align}
\end{de}
In addition, the following results hold for a.e. $\omega\in \Omega$.\\
(i) If $\psi\in L^p(\Delta, \mu)$ for $1\le p\le \infty$, then
\begin{align}\label{con}
\|P_\omega\psi_\omega\|_{L^p(\mu_{\sigma\omega})}\le \|\psi_\omega\|_{L^p(\mu_{\omega})}.
\end{align}
(ii) If $\psi\in L^1(\Delta, \mu)$, then
\begin{align}\label{cvm}
\E_{\mu_\omega}[\psi_{\sigma^i\omega}\circ F_\omega^i|(F_\omega^{i+1})^{-1}\mathcal B_{\sigma^{i+1}\omega}]
=[P_{\sigma^i\omega}(\psi_{\sigma^i\omega})]\circ F_\omega^{i+1} \quad\hbox{~in~} L^1(\mu_\omega).
\end{align}
(iii) If $\phi,\psi\in L^2(\Delta, \mu)$, then
\[
P_{\omega}^{i+k}(\psi_{\sigma^i\omega}\circ F_\omega^{i}\cdot \phi_\omega)=P_{\sigma^i\omega}^{k}(\psi_{\sigma^i\omega}\cdot P_\omega^i(\phi_\omega)) \quad\hbox{~in~} L^1(\mu_{\sigma^{i+k}\omega}),
\]
where $P_\omega^i:=P_{\sigma^{i-1}\omega}\circ\cdots\circ P_{\sigma\omega}\circ P_\omega$.

\begin{prop}\label{ann-decay}
Consider the RYT satisfying (P1)-(P8) with $a>1$. Let $\phi\in \mathcal F_{\beta}^{K}$. Then for any small $\delta\in (0,a-1)$, there is a constant $C_{h,\phi}>0$ such that
\[
\E\int\big|P_{\omega}^{n}(\phi_\omega-\int\phi_\omega \rmd\mu_{\omega})\big|\rmd\mu_{\sigma^n \omega}\le C_{h,\phi}n^{-(a-1-\delta)}.
\]
\end{prop}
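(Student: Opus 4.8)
The plan is to deduce the annealed $L^1$ decay estimate from the quenched decay of correlation (P8) by an averaging argument over $\omega$, after first reducing the $L^1$-norm of the transfer operator image to a supremum of correlations against bounded test functions. Write $\phi_\omega^\circ := \phi_\omega - \int \phi_\omega\,\rmd\mu_\omega$, so that $\int P_\omega^n(\phi_\omega^\circ)\,\rmd\mu_{\sigma^n\omega} = 0$. The first step is the standard duality trick: for fixed $\omega$, choose $\psi \in L^\infty(\Delta,\mu)$ with $\|\psi_{\sigma^n\omega}\|_\infty \le 1$ realizing (up to $\varepsilon$) the sign of $P_\omega^n(\phi_\omega^\circ)$, so that
\[
\int \big| P_\omega^n(\phi_\omega^\circ)\big|\,\rmd\mu_{\sigma^n\omega}
\le \int P_\omega^n(\phi_\omega^\circ)\,\psi_{\sigma^n\omega}\,\rmd\mu_{\sigma^n\omega} + \varepsilon
= \int \phi_\omega^\circ \cdot (\psi_{\sigma^n\omega}\circ F_\omega^n)\,\rmd\mu_\omega + \varepsilon,
\]
using the defining relation \eqref{tran} of the iterated transfer operator. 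The right-hand integral is exactly a correlation of the form appearing in (P8), now with $\phi := \psi \in L^\infty$ as the ``future'' observable and $\psi := \phi^\circ_\omega h_\omega$ (relative to $m$) — or more precisely, after rewriting $\rmd\mu_\omega = h_\omega\,\rmd m$, it matches the left-hand side of \eqref{dec} with the roles chosen so that the $\mathcal F_\beta^K$ observable is built from $\phi$ and $h$. Since $\phi \in \mathcal F_\beta^K$ and, by Proposition~\ref{suph}, $h_\omega \in \mathcal F_\beta^+ \cap \mathcal F_\beta^{K_\omega}$ with $\operatorname*{ess\,sup} h_\omega < \infty$, the product $\phi_\omega h_\omega$ (minus its mean) again lies in $\mathcal F_\beta^{K'}$ for a suitably enlarged random Lipschitz weight $K'_\omega \le C(K_\omega + K_\omega^2)$ still satisfying a (stretched) exponential tail bound of the form \eqref{k}.

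The second step is to apply (P8): for $\omega$ in the full-measure set $\Omega_0$ there is an integrable random variable $C_\omega$ and a constant $C_{\phi,h}$ (uniform in $\omega$, depending only on $M_\phi, C_\phi$ and the constants from Proposition~\ref{suph}) such that
\[
\Big| \int \phi_\omega^\circ \cdot (\psi_{\sigma^n\omega}\circ F_\omega^n)\,\rmd\mu_\omega \Big|
\le C_\omega\, C_{\phi,h}\, n^{-(a-1-\delta)}.
\]
Letting $\varepsilon \to 0$ and integrating over $\omega$ with respect to $\P$ gives
\[
\E \int \big| P_\omega^n(\phi_\omega^\circ)\big|\,\rmd\mu_{\sigma^n\omega}
\le C_{\phi,h}\, n^{-(a-1-\delta)} \, \E[C_\omega]
=: C_{h,\phi}\, n^{-(a-1-\delta)},
\]
which is the claimed bound, since $\E[C_\omega] < \infty$ by the integrability asserted in (P8). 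One should check that the constant $C_{\phi,\psi}$ in \eqref{dec}, when $\psi$ ranges over the $\varepsilon$-optimal test functions with $L^\infty$-norm at most $1$, can be taken uniform — this is where the structure of (P8) matters, as the constant there depends on the $\mathcal F_\beta^K$ observable (which is $\phi_\omega h_\omega$, fixed) but only on the $L^\infty$-norm of the other observable (which is bounded by $1$). So the dependence on $n$ and the uniformity are both as needed.

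The main obstacle, and the place requiring genuine care, is the bookkeeping around the roles of the two observables in (P8) and the measurability/integrability of the constant. Concretely: (i) one must verify that the $\varepsilon$-optimal sign function $\psi_{\sigma^n\omega}$ can be chosen so that the pair $(\phi,\psi) \mapsto C_{\phi,\psi}$ stays controlled — this is routine because (P8) only constrains the $L^\infty$ norm of that slot, but it must be stated carefully to avoid a circular dependence of the constant on $n$; (ii) one must confirm that $\phi_\omega h_\omega - \int \phi_\omega h_\omega\,\rmd m$, or whichever combination is needed to cast the correlation into the exact form of \eqref{dec}, genuinely belongs to the class $\mathcal F_\beta^K$ with a weight satisfying \eqref{k} — this uses that products and differences of $\mathcal F_\beta^{K}$ functions stay in such a class (the separation time $s_\omega$ behaves well under products since $\beta^{s_\omega} \le 1$), together with the uniform essential bound on $h_\omega$ from Proposition~\ref{suph}; and (iii) the integrability $\E[C_\omega] < \infty$ is exactly the hypothesis in (P8), so no extra work is needed there, but it is the reason (P8) is stated with an \emph{integrable} random constant rather than merely an a.s.\ finite one — a point worth flagging, as Remark~\ref{com} notes that in the ergodic-automorphism case of \cite{ABRV23} no such regularity of $C_\omega$ is available, which is precisely why (P8) is imposed as a standing assumption here.
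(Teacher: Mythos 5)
Your proposal is correct and follows essentially the same route as the paper: duality via \eqref{tran}, rewriting $\rmd\mu_\omega=h_\omega\,\rmd m$ so that (P8) applies with the $\mathcal F_\beta^K$ slot occupied by $(\phi_\omega-\int\phi_\omega\,\rmd\mu_\omega)h_\omega$ and the $L^\infty$ slot by the sign of $P_\omega^n(\phi_\omega-\int\phi_\omega\,\rmd\mu_\omega)$, then using $\operatorname*{ess\,sup}h<\infty$ and $\E[C_\omega]<\infty$. The only cosmetic difference is that the exact sign function is itself an admissible $L^\infty$ test function, so the $\varepsilon$-approximation step is unnecessary.
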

\begin{proof}
For $\psi\in L^{\infty}(\Delta,\mu)$, by \eqref{tran}, we have
\begin{align*}
&\E\Big|\int P_{\omega}^{n}(\phi_\omega-\int\phi_\omega \rmd\mu_{\omega})\psi_{\sigma^n \omega}\rmd\mu_{\sigma^n \omega}\Big|
=\E\Big|\int (\phi_\omega-\int\phi_\omega \rmd\mu_{\omega})\psi_{\sigma^n \omega}\circ F_{\omega}^{n}\rmd\mu_{\omega}\Big|\\
&=\E\Big|\int (\phi_\omega-\int\phi_\omega\rmd\mu_{\omega})h_\omega\psi_{\sigma^n \omega}\circ F_{\omega}^{n}\rmd m\Big|.
\end{align*}
Then it follows from Propositions~\ref{suph} and \eqref{dec} that
\begin{align*}
&\E\Big|\int P_{\omega}^{n}(\phi_\omega-\int\phi_\omega \rmd\mu_{\omega})\psi_{\sigma^n \omega}\rmd\mu_{\sigma^n \omega}\Big|\\
\le &\operatorname*{ess\,sup}_{\omega\in \Omega, x\in \Delta_\omega} h_\omega(x)\E C_\omega C_{\phi,\psi}n^{-(a-1-\delta)}\\
=&C_{h,\phi,\psi}n^{-(a-1-\delta)}.
\end{align*}
Take $\psi_{\sigma^n \omega}=\hbox{sgn~} P_{\omega}^{n}(\phi_\omega-\int\phi_\omega \rmd\mu_{\omega})$, then we have
\[
\E\int\big|P_{\omega}^{n}(\phi_\omega-\int\phi_\omega \rmd\mu_{\omega})\big|\rmd\mu_{\sigma^n \omega}\le C_{h,\phi}n^{-(a-1-\delta)}.
\]
\end{proof}

\begin{prop}[Regularities]\label{reg}
Suppose that $\phi\in \mathcal F_{\beta}^{K}$ with a Lipschitz constant $C_\phi$. Define $\Phi_n(\omega,\cdot)=(P_\omega^n\phi_\omega)(\cdot)$ for any $n\in \N$. Then $\Phi_n\in \mathcal F_{\beta}^{K\circ \sigma^{-n}+C_{h,F}}$ with a Lipschitz constant $C_\phi$ and a constant $C_{h,F}$  depending on $h$ and $F$.
\end{prop}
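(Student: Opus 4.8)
The plan is to pass to the reference transfer operator of $F_\omega$ with respect to $m$, write everything through explicit inverse branches, and then exploit two structural features of a Young tower: moving up the tower has unit Jacobian and transports the density rigidly, so no regularity is lost there, whereas each completed return contracts the separation-time weight by $\beta$ and produces only density/distortion corrections that carry \emph{no} factor of $K$ and sum to a geometric series in $\beta$. Throughout, the fibre of $\Phi_n$ over $\omega$ is read as $P_{\sigma^{-n}\omega}^n\phi_{\sigma^{-n}\omega}$, a function on $\Delta_\omega$ (so the seminorm weight that naturally appears is $K_{\sigma^{-n}\omega}=(K\circ\sigma^{-n})_\omega$); it is convenient to argue for $P_\omega^n$ and then re-index $\omega\mapsto\sigma^{-n}\omega$ at the end. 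The boundedness clause is immediate: $\mu$ being $F$-equivariant gives $P_\omega1=1$, and $P_\omega$ is positive, so $|P_\omega^n\psi_\omega|\le P_\omega^n|\psi_\omega|\le\|\psi_\omega\|_\infty$, whence $\|\Phi_n\|_\infty\le M_\phi$ uniformly in $\omega,n$.

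By Proposition~\ref{suph}, $\mu_\omega=h_\omega m$ with $h\in\mathcal F_\beta^{+}$, $h$ bounded above, and $P_\omega^n\psi_\omega=h_{\sigma^n\omega}^{-1}\hat P_\omega^n(h_\omega\psi_\omega)$, where $\hat P_\omega$ is the transfer operator of $F_\omega$ with respect to $m$. Hence, for $\mu_{\sigma^n\omega}$-a.e.\ $y$,
\[
(P_\omega^n\psi_\omega)(y)=\sum_{x\in F_\omega^{-n}y}g_\omega^{(n)}(x)\,\psi_\omega(x),\qquad g_\omega^{(n)}(x):=\frac{h_\omega(x)}{h_{\sigma^n\omega}(F_\omega^n x)\,JF_\omega^n(x)},\qquad \sum_{x\in F_\omega^{-n}y}g_\omega^{(n)}(x)=1.
\]
Using $\hat P_\omega^n h_\omega=h_{\sigma^n\omega}$ one checks that for $y=(z,\ell)$ with $\ell\ge n$ there is a single preimage, $JF_\omega^n\equiv1$, and $g_\omega^{(n)}\equiv1$, so $(P_\omega^n\psi_\omega)(z,\ell)=\psi_\omega(z,\ell-n)$; for $\ell<n$ the last $\ell$ steps are vertical and $(P_\omega^n\psi_\omega)(z,\ell)=(P_\omega^{n-\ell}\psi_\omega)(z,0)$. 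Thus only $P_\omega^m$ restricted to a base $\Delta_{\sigma^m\omega,0}$ needs to be controlled; there, preimages are organised branch-by-branch by the (random) first-return partition, and one pairs the preimage $x$ of $y$ with the preimage $x'$ of $y'$ lying in the same element of $\bigvee_{j<m}F_\omega^{-j}\mathcal P_{\sigma^j\omega}$. Along such a pair the orbit performs the same number $r$ of completed returns, and $s_\omega(x,x')=r+s_{\sigma^m\omega}(F_\omega^m x,F_\omega^m x')$ (vertical motion leaves the separation time unchanged, each completed return lowers it by one).

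For $y,y'$ in the same element of $\mathcal P_{\sigma^m\omega}$ on the base, put $s:=s_{\sigma^m\omega}(y,y')$ and split
\[
(P_\omega^m\psi_\omega)(y)-(P_\omega^m\psi_\omega)(y')=\sum_{x}g_\omega^{(m)}(x')\big(\psi_\omega(x)-\psi_\omega(x')\big)+\sum_{x}\big(g_\omega^{(m)}(x)-g_\omega^{(m)}(x')\big)\psi_\omega(x).
\]
In the first sum $|\psi_\omega(x)-\psi_\omega(x')|\le C_\phi K_\omega\beta^{r+s}\le C_\phi K_\omega\beta^{s}$ and the weights sum to $1$, so this term is $\le C_\phi K_\omega\beta^{s}$: the Lipschitz constant stays $C_\phi$ and the seminorm weight stays $K_\omega$. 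In the second sum, compare the weights multiplicatively: writing $g_\omega^{(m)}(x)/g_\omega^{(m)}(x')$ as the product of $h_\omega(x)/h_\omega(x')$, $h_{\sigma^m\omega}(y')/h_{\sigma^m\omega}(y)$ and $JF_\omega^m(x')/JF_\omega^m(x)$, the $h$-ratios are $1+O(\beta^{\bullet})$ by the \emph{multiplicative} regularity $h\in\mathcal F_\beta^{+}$ (which gives, with no $K$, $|h_\omega(x)/h_\omega(x')-1|\le(e^{C_h'}-1)\beta^{s_\omega(x,x')}$), and the Jacobian ratio is handled by bounded distortion (P2) applied at each of the $r$ completed returns (the relevant separation times there being at least $s$, and their $\beta$-powers summing geometrically); this gives $|g_\omega^{(m)}(x)/g_\omega^{(m)}(x')-1|\le C_{h,F}\beta^{s}$ for a constant $C_{h,F}$ depending only on $\beta$, the distortion constant in (P2), $C_h'$ and $\|h\|_\infty$. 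Hence the second sum is $\le\|\psi_\omega\|_\infty C_{h,F}\beta^{s}\le M_\phi C_{h,F}\beta^{s}$, which contributes an additive $C_{h,F}$ to the seminorm weight (absorbing the harmless factor $M_\phi/C_\phi$). Combining the two sums with the boundedness clause, and re-indexing $\omega\mapsto\sigma^{-n}\omega$, gives $\Phi_n\in\mathcal F_\beta^{K\circ\sigma^{-n}+C_{h,F}}$ with Lipschitz constant $C_\phi$.

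The main obstacle is the bookkeeping forced by the \emph{random} structure: the first-return map sends the base into a fibre $\Delta_{\sigma^{R_\omega}\omega,0}$ that depends on the branch, so the weights $g_\omega^{(m)}$ live on different targets and cannot be compared all at once; the argument must stay strictly branch-by-branch, track the separation-time shift through every excursion, and invoke $\sum g_\omega^{(m)}=1$ only when it is genuinely the sum over all preimages of a single point. The second delicate point, needed for a single $C_{h,F}$ valid for all $\omega$ and all $n$, is that the $\mathcal F_\beta^{+}$-Lipschitz constant of $h$ and the distortion constant in (P2) are uniform in $\omega$, and that the geometric series in $\beta$ converges regardless of how many returns occur among the $n$ steps.
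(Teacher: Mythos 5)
The paper does not prove this proposition itself — it defers to \cite[Lemma~4.4]{Su22}, whose proof is exactly the inverse-branch/distortion argument you give (reduction to the base via the unit-Jacobian vertical dynamics, pairing of preimages through $m$-cylinders, splitting into a $\psi(x)-\psi(x')$ sum controlled by $\sum g=1$ and a weight-ratio sum controlled by $h\in\mathcal F_\beta^{+}$ plus (P2) summed geometrically over returns) — so your proposal matches the intended approach and is correct. Two small points: (i) when $y,y'$ lie in different elements of $\mathcal P_\omega$ (e.g.\ at different levels), so $s_\omega(y,y')=0$ and no canonical pairing of preimages exists, you should simply observe that $\Phi_n(y)$ and $\Phi_n(y')$ are convex combinations of values of $\phi$ on the single fibre $\Delta_{\sigma^{-n}\omega}$, whence $|\Phi_n(y)-\Phi_n(y')|\le C_\phi K_{\sigma^{-n}\omega}\beta^{0}$ by the hypothesis on $\phi$ itself; (ii) your second sum is bounded by $M_\phi C_{h,F}\beta^{s}$ rather than $C_\phi C_{h,F}\beta^{s}$, so the constant you obtain after ``absorbing $M_\phi/C_\phi$'' depends on $\phi$ and not only on $h$ and $F$ — this is harmless for every use made of the proposition in the paper, but it is strictly a (cosmetic) deviation from the stated form that is worth flagging rather than hiding in the word ``absorbing''.
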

\begin{proof}
See \cite[Lemma~4.4]{Su22} for details.
\end{proof}

\section{Statement of main results}
In this section, we state our main results. We first suppose that  $\phi\in \mathcal F_{\beta}^{K}$ and
$\int \phi_\omega \rmd\mu_\omega=0$. For a.e. $\omega\in \Omega$, denote the Birkhoff sum
$S_n^\omega:=\sum_{i=0}^{n-1}\phi_{\sigma^i\omega}\circ F_{\omega}^{i}$, and the variance
\[\Sigma_n^2(\omega):=\int \big(\sum_{i=0}^{n-1}\phi_{\sigma^i\omega}\circ F_{\omega}^{i}\big)^2 \rmd\mu_\omega.\]
We consider a continuous process $W_{n}^{\omega}(t)\in C[0,1]$ defined by
\[
W_{n}^{\omega}(t)=\frac{1}{\sqrt n}\Big[\sum_{i=0}^{[nt]-1}\phi_{\sigma^i\omega}\circ F_{\omega}^{i}+(nt-[nt])\phi_{\sigma^{[nt]}\omega}\circ F_{\omega}^{[nt]}\Big],\quad t\in[0,1].
\]

\begin{thm}\label{thnon1}
Consider the RYT satisfying (P1)-(P8) with $a>5$. Let $\delta>0$ be small s.t. $q=\frac{a-1-\delta}{\delta+1}\ge4$. Suppose that $\phi\in \mathcal F_{\beta}^{K}$ and
$\int \phi_\omega \rmd\mu_\omega=0$.  Then the following results hold. \\
$(1)$ There is a constant $\Sigma^2\ge 0$ such that
$\lim_{n\to \infty}\frac{1}{n}\Sigma_n^2(\omega)=\Sigma^2$ for a.e. $\omega\in \Omega$.\\
$(2)$ If $\Sigma^2=0$,  then $\phi$ is a coboundary, that is, there exists $\chi\in L^q(\Delta,\mu)$ such that for a.e. $\omega\in \Omega$,
\[\phi_{\omega}=\chi_{\sigma\omega}\circ F_\omega-\chi_\omega\quad \hbox{a.s.}\]
$(3)$ If $\Sigma^2>0$, then for a.e. $\omega\in \Omega$, $W_n^\omega\to_w W$ in $C[0,1]$, where $W$ is a Brownian motion with variance $\Sigma^2$.
\end{thm}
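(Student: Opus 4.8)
The plan is to establish all three parts through a martingale approximation for the Birkhoff sums $S_n^\omega$, and then invoke the Skorokhod embedding theorem together with standard convergence criteria. First I would set up the \emph{primary} martingale-coboundary decomposition: using the random transfer operators $P_\omega$ and the regularity estimates of Proposition~\ref{reg}, write $\phi_\omega = m_\omega + \chi_{\sigma\omega}\circ F_\omega - \chi_\omega$, where $\chi_\omega = \sum_{k\ge 1} P_{\sigma^{-k}\omega}^k(\phi_{\sigma^{-k}\omega})\circ$ (appropriately composed with $F$), the sum converging in $L^q$ thanks to Proposition~\ref{ann-decay} and the hypothesis $q=\frac{a-1-\delta}{\delta+1}\ge 4$ (which forces $a>5$), and $m_\omega$ satisfies the reverse-martingale property $\E_{\mu_\omega}[m_{\sigma^i\omega}\circ F_\omega^i \mid (F_\omega^{i+1})^{-1}\mathcal B_{\sigma^{i+1}\omega}]=0$ via \eqref{cvm}. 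This realizes $S_n^\omega$ as a martingale $M_n^\omega=\sum_{i=0}^{n-1} m_{\sigma^i\omega}\circ F_\omega^i$ plus a telescoping coboundary $\chi_{\sigma^n\omega}\circ F_\omega^n - \chi_\omega$ of order $o(\sqrt n)$ in $L^q$.

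Next I would prove part~(1). The quenched variance growth $\frac1n\Sigma_n^2(\omega)\to\Sigma^2$ should follow from the $L^2$ (indeed $L^q$) control of the decomposition: $\Sigma_n^2(\omega)=\E_{\mu_\omega}[(M_n^\omega)^2]+O(\sqrt n)$-type error terms, and $\E_{\mu_\omega}[(M_n^\omega)^2]=\sum_{i=0}^{n-1}\E_{\mu_{\sigma^i\omega}}[m_{\sigma^i\omega}^2]$ by orthogonality of martingale increments. The sequence $\E_{\mu_{\sigma^i\omega}}[m_{\sigma^i\omega}^2]=g(\sigma^i\omega)$ is a Birkhoff average of the $L^1(\P)$ function $g$ along the ergodic map $\sigma$, so Birkhoff's ergodic theorem gives the a.e.\ limit $\Sigma^2=\E[g]=\int_\Omega\E_{\mu_\omega}[m_\omega^2]\,\rmd\P$. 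Part~(2) is then immediate: if $\Sigma^2=0$ then $g=0$ $\P$-a.e., so $m_\omega=0$ $\mu_\omega$-a.e.\ for a.e.\ $\omega$, and the primary decomposition collapses to $\phi_\omega=\chi_{\sigma\omega}\circ F_\omega-\chi_\omega$ with $\chi\in L^q(\Delta,\mu)$ as constructed.

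For part~(3), with $\Sigma^2>0$ I would embed the martingale $M_n^\omega$ into a Brownian motion $B$ via the martingale version of the Skorokhod embedding (Strassen), obtaining stopping times $\tau_i$ with $M_n^\omega=_d B(T_n^\omega)$, $T_n^\omega=\sum_{i=1}^n\tau_i$, and $\E[\tau_i\mid\mathcal F_{i-1}]=\E_{\mu_{\sigma^{i-1}\omega}}[m_{\sigma^{i-1}\omega}^2\mid\cdots]$. The key point is $\frac1n T_n^\omega\to\Sigma^2$ a.s.; one shows $\frac1n\E[T_n^\omega\mid\text{past}]\to\Sigma^2$ by the same Birkhoff argument, and controls the fluctuation using the $L^{q/2}\subset L^2$ bound on the conditional second moments together with a martingale SLLN (this is where $q\ge 4$ is used). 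Combined with the coboundary being $o(\sqrt n)$ in probability (Markov's inequality on the $L^q$ bound, plus Borel–Cantelli along a subsequence and a maximal inequality to fill gaps), Donsker's invariance principle for Brownian motion with a time change converging to $\Sigma^2 t$ yields $W_n^\omega\to_w W$ in $C[0,1]$.

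The main obstacle I anticipate is the \emph{a.e.-$\omega$} (quenched) control, as opposed to annealed: Proposition~\ref{ann-decay} gives decay only after integrating over $\Omega$, so passing to a.e.-$\omega$ statements requires a Borel–Cantelli / Gal–Koksma type argument exploiting the polynomial rate $n^{-(a-1-\delta)}$ with $a>5$, which is exactly why the series defining $\chi_\omega$ converges in $L^q$ for a.e.\ $\omega$ and why $q\ge 4$ appears. A secondary difficulty is verifying that the negligible coboundary terms $\chi_{\sigma^n\omega}\circ F_\omega^n$ do not disturb the supremum norm on $[0,1]$; this needs a maximal inequality (Doob's for the martingale part, plus a uniform bound on $\max_{k\le n}|\chi_{\sigma^k\omega}\circ F_\omega^k|/\sqrt n\to 0$, obtained from $\chi\in L^q$ and a Borel–Cantelli argument along dyadic blocks). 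The self-normalized version announced in the introduction (with rate $O(n^{-1/4+\delta})$) will require the \emph{secondary} martingale-coboundary decomposition to control $\sum m_{\sigma^i\omega}^2$ quantitatively, but for the qualitative Theorem~\ref{thnon1} the primary decomposition plus ergodicity suffices.
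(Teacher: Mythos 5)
Parts (1) and (2) of your proposal are essentially the paper's argument: the primary decomposition $\phi_\omega=\psi_\omega+\chi_{\sigma\omega}\circ F_\omega-\chi_\omega$ with $\chi,\psi\in L^q$, orthogonality of the reverse martingale increments, and Birkhoff's theorem on $(\Omega,\sigma,\P)$ applied to $\omega\mapsto\int\psi_\omega^2\,\rmd\mu_\omega$ give (1), and $\Sigma^2=0$ forces $\psi=0$, giving (2). Your route to (3) via Skorokhod embedding is also legitimate in principle (the paper instead uses the martingale CLT for arrays from Billingsley, Theorems 18.1--18.2, in its Appendix), but the choice of embedding versus array CLT is not where the difficulty lies.

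The genuine gap is in your treatment of the conditional variances in part (3). Whichever route you take, you must show that
\[
\frac1n\sum_{i=0}^{[nt]-1}\E_{\mu_\omega}\bigl[\psi_{\sigma^i\omega}^2\circ F_\omega^i\,\big|\,(F_\omega^{i+1})^{-1}\mathcal B_{\sigma^{i+1}\omega}\bigr]
=\frac1n\sum_{i=0}^{[nt]-1}\bigl[P_{\sigma^i\omega}(\psi_{\sigma^i\omega}^2)\bigr]\circ F_\omega^{i+1}
\longrightarrow t\Sigma^2
\]
in $\mu_\omega$-probability. Your ``same Birkhoff argument'' only controls the \emph{unconditional} part $\frac1n\sum_i\int\psi_{\sigma^i\omega}^2\,\rmd\mu_{\sigma^i\omega}$, which is a genuine Birkhoff average over the ergodic base $\sigma$. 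The remaining fluctuation is exactly $\frac1n\sum_{i=0}^{[nt]-1}\breve\phi_{\sigma^i\omega}\circ F_\omega^i$ with $\breve\phi_\omega=[P_\omega(\psi_\omega^2-\int\psi_\omega^2\,\rmd\mu_\omega)]\circ F_\omega$; this is a Birkhoff sum along the fiber of the skew product $F$, not along $\sigma$, it is \emph{not} a (reverse) martingale with respect to the natural filtration, and its almost sure or in-probability smallness does not follow from ergodicity of $\sigma$ (ergodicity of $F$ with respect to $\mu$ is neither assumed nor proved in the paper). Your martingale SLLN handles the fluctuation of $T_n^\omega$ around $V_{n,n}^\omega=\sum_i\E[\tau_i\mid\mathcal F_{i-1}]$, but not the fluctuation of $V_{n,n}^\omega$ around its mean. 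This is precisely why the paper constructs the \emph{secondary} martingale-coboundary decomposition for $\breve\phi$ (Lemmas 4.2--4.3) and deduces $\|\sum_{i<n}\breve\phi_{\sigma^i\omega}\circ F_\omega^i\|_{L^{q/2}(\mu_\omega)}=O_\omega(n^{1/2})$ (Corollary 5.5), which is then used in the proof of Theorem 3.1(3). Your closing assertion that ``for the qualitative Theorem the primary decomposition plus ergodicity suffices'' is therefore the step that fails: either supply an ergodicity argument for the skew product $F$ on $(\Delta,\mu)$, or carry out the secondary decomposition already at this stage.
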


Furthermore, we aim to obtain the Wasserstein convergence rate in the quenched invariance principle for the random Young tower. Notice that we do not ask any regularity assumption on $\phi$ as a function of  $\omega,$  and any mixing assumption on the driving system $\sigma$. So we
cannot obtain a convergence rate on $\lim_{n\to \infty}\frac{1}{n}\Sigma_n^2(\omega)=\Sigma^2$. Hence we redefine a self-normalized continuous process.

For a.e. $\omega\in \Omega$ and every $t\in[0,1]$, set
\begin{align}\label{Nt}
N_n^\omega(t):=\min\{1\le k\le n: t\Sigma_n^2(\omega)\le\Sigma_{k}^2(\omega)\}.
\end{align}
Define a continuous process $\overline W_{n}^{\omega}(t)\in C[0,1]$ by
\begin{equation}\label{wnt}
\overline W_{n}^{\omega}(t):=\frac{1}{\Sigma_n(\omega)}\Big[\sum_{i=0}^{N_n^\omega-1}\phi_{\sigma^i\omega}\circ F_{\omega}^{i}+\frac{t\Sigma_n^2-\Sigma_{N_n^{\omega}-1}^2}{\Sigma_{N_n^{\omega}}^2-
\Sigma_{N_n^\omega-1}^2}\phi_{\sigma^{N_n^\omega}\omega}\circ F_{\omega}^{N_n^\omega}\Big],\quad t\in[0,1].
\end{equation}
%
\begin{thm}\label{thnon2}
Consider the RYT satisfying (P1)-(P8) with $a>5$. Let $\delta>0$ be small s.t. $q=\frac{a-1-\delta}{\delta+1}\ge4$. Suppose that $\phi\in \mathcal F_{\beta}^{K}$ and
$\int \phi_\omega \rmd\mu_\omega=0$. Then for a.e. $\omega\in \Omega$, there exists a constant $C=C_{\omega,q,\phi}>0$ such that
$\mathcal{W}_{\frac{q}{4}}(\overline W_{n}^{\omega},B)\leq C n^{-\frac{1}{4}+\frac{1}{2q}}$
for all $n\geq 1$, where $B$ is a standard Brownian motion.
\end{thm}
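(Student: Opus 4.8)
The plan is to run a martingale approximation followed by the martingale version of the Skorokhod embedding, using the \emph{secondary} martingale–coboundary decomposition of Section~4 to pin down the quadratic variation of the approximating martingale, since that is precisely the quantity the Skorokhod time change must match.

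\textbf{Step 1 (reduction to the martingale).} I would start from the primary decomposition $\phi_\omega=\psi_\omega+\chi_{\sigma\omega}\circ F_\omega-\chi_\omega$ of Section~4, with $P_\omega\psi_\omega=0$ and $\psi,\chi\in L^{q}(\Delta,\mu)$, so that $S_n^\omega=M_n^\omega+\chi_{\sigma^n\omega}\circ F_\omega^n-\chi_\omega$, where $M_n^\omega=\sum_{i=0}^{n-1}\psi_{\sigma^i\omega}\circ F_\omega^i$ is a (reverse) martingale for the decreasing filtration $\mathcal F_i=(F_\omega^i)^{-1}\mathcal B_{\sigma^i\omega}$. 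By Theorem~\ref{thnon1}(1) one has $\tfrac1n\Sigma_n^2(\omega)\to\Sigma^2$, and in the nondegenerate case $\Sigma^2>0$ this gives $\Sigma_n(\omega)\asymp_\omega n^{1/2}$. Since $\phi$ is bounded and $\chi\in L^{q}$, the crude maximal bound $\max_{k\le n}|\chi_{\sigma^k\omega}\circ F_\omega^k|\le\big(\sum_{k\le n}|\chi_{\sigma^k\omega}\circ F_\omega^k|^{q}\big)^{1/q}$ together with Birkhoff's theorem gives $\big\|\max_{k\le n}|\chi_{\sigma^k\omega}\circ F_\omega^k|\big\|_{L^{q/4}(\mu_\omega)}=O_\omega(n^{1/q})$; hence the coboundary correction and the interpolation term in \eqref{wnt} contribute at most $O_\omega(n^{1/q-1/2})$ to $\mathcal W_{q/4}$, which is already within the asserted rate. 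It therefore suffices to couple $\Sigma_n(\omega)^{-1}M_{N_n^\omega(t)}$, suitably interpolated, with a standard Brownian motion.

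\textbf{Step 2 (Skorokhod embedding and the variance estimate).} Applying the martingale Skorokhod embedding to $M^\omega$ (in the form valid for a reverse martingale, or after a time reversal), on an enlarged probability space there are a Brownian motion $B$ and times $0=\tau_0\le\tau_1\le\cdots$ with $M_k^\omega=B(\tau_k)$, with $E[\tau_{i+1}-\tau_i\mid\mathcal F_i]=V_i$ the conditional variance of the $i$-th increment, and with $\tau_{i+1}-\tau_i\in L^{q/2}(\mu_\omega)$ because $\psi\in L^{q}$. The core estimate is a bound on $\big\|\sup_{k\le n}|\tau_k-\Sigma_k^2|\big\|_{L^{q/2}(\mu_\omega)}$: I would write $\tau_k-\Sigma_k^2=\sum_{i<k}\big((\tau_{i+1}-\tau_i)-V_i\big)+\big(\sum_{i<k}V_i-\Sigma_k^2\big)$, control the first sum (a martingale with $L^{q/2}$ increments) by the Doob and Burkholder inequalities, and for the second use \eqref{cvm} to see that each $V_i$ is, up to reindexing, of the form $[P_{\sigma^j\omega}(\psi_{\sigma^j\omega}^2)]\circ F_\omega^{j+1}$, so that $V_i-\int V_i\,d\mu_\omega=g_{\sigma^j\omega}\circ F_\omega^{j+1}$ with $g_\omega:=P_\omega(\psi_\omega^2)-\int\psi_\omega^2\,d\mu_\omega$ of zero mean; the \emph{secondary} martingale–coboundary decomposition of $g$ then replaces $\sum_{i<k}(V_i-\int V_i)$ by a further martingale plus a coboundary controlled uniformly in $k$. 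Finally $\sum_{i<k}\int V_i\,d\mu_\omega=\sum_i\int\psi_{\sigma^i\omega}^2\,d\mu_{\sigma^i\omega}$ agrees with $\Sigma_k^2$ up to $O_\omega(1)$ by orthogonality of the martingale increments together with summability of the $\chi$-cross terms (Proposition~\ref{ann-decay}, whence the hypothesis $a>5$). The upshot is $\big\|\sup_{k\le n}|\tau_k-\Sigma_k^2|\big\|_{L^{q/2}(\mu_\omega)}=O_\omega(n^{1/2+\varepsilon})$ with $\varepsilon$ of order $1/q$, the extra $\varepsilon$ coming from the truncation needed to make the secondary decomposition work (see below).

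\textbf{Step 3 (assembling the Wasserstein bound) and the main obstacle.} The choice of $N_n^\omega$ in \eqref{Nt} forces $|\Sigma_{N_n^\omega(t)}^2-t\Sigma_n^2|\le\max_{k\le n}(\Sigma_k^2-\Sigma_{k-1}^2)=O_\omega(1)$ since $\phi$ is bounded, so Step~2 yields $\|\eta_n\|_{L^{q/2}(\mu_\omega)}=O_\omega(n^{1/2+\varepsilon})$ for $\eta_n:=\sup_t|\tau_{N_n^\omega(t)}-t\Sigma_n^2|$. Writing $B^*(t):=\Sigma_n(\omega)^{-1}B(t\Sigma_n^2)$, a standard Brownian motion by scaling, one has $\mathcal W_{q/4}(\overline W_n^\omega,B)^{q/4}\le E\big[\sup_t|\overline W_n^\omega(t)-B^*(t)|^{q/4}\big]$, whose leading term (after Step~1) is $\Sigma_n(\omega)^{-q/4}\,E\big[\sup_t|B(\tau_{N_n^\omega(t)})-B(t\Sigma_n^2)|^{q/4}\big]$; bounding this by $E[\Omega(\eta_n)^{q/4}]$ with $\Omega$ the modulus of continuity of $B$ on $[0,O_\omega(n)]$ and handling the dependence between $B$ and $\eta_n$ by a dyadic peeling in the scale of $\eta_n$ — using the Lévy modulus on each layer and the $L^{q/2}$-moment bound on $\eta_n$ for the layers of small probability — gives $O_\omega(n^{-1/4+1/(2q)})$ for this term; collecting with the $O_\omega(n^{1/q-1/2})$ of Step~1 proves the theorem. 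The two points that carry the weight are: first, \emph{constructing} the secondary decomposition of $g=P(\psi^2)-\int\psi^2$, which is only in $L^{q/2}$ and unbounded, so the transfer-operator series does not converge for free and one must truncate $\psi^2$ to a bounded observable before invoking the quenched decay of correlations (P8)/Proposition~\ref{ann-decay}, optimizing the truncation level — this is the source of the loss $n^{1/(2q)}$ over the Skorokhod-optimal $n^{-1/4}$; and second, the coupling estimate of Step~3, where the Brownian modulus of continuity must be evaluated at the \emph{random}, $B$-correlated gap $\eta_n$, which has only polynomial moments, so the naive bound fails and the peeling argument (following the ideas of \cite{BW71,T05}) is essential.
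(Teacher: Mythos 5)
Your high-level architecture — primary martingale--coboundary decomposition, Skorokhod embedding, secondary decomposition to control the quadratic variation, Brownian H\"older continuity to convert a time discrepancy into a spatial one — is exactly the paper's. But two of the load-bearing steps are handled differently in a way that leaves genuine gaps.

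\textbf{The secondary decomposition.} You propose to build the decomposition of $g_\omega=P_\omega(\psi_\omega^2)-\int\psi_\omega^2\,\rmd\mu_\omega$ by truncating $\psi^2$ to a bounded observable before invoking (P8), optimizing the truncation level, and you attribute the loss $n^{1/(2q)}$ to this truncation. The paper does not truncate. It expands $\psi_\omega^2=\phi_\omega^2-\chi_{\sigma\omega}^2\circ F_\omega+\chi_\omega^2+2\phi_\omega\chi_\omega-2\psi_\omega\,\chi_{\sigma\omega}\circ F_\omega$; the last term is annihilated by $P_\omega$, the $\chi^2$ terms telescope under $\sum_k P^k$, and the remaining observables $\phi_\omega^2$ and $\phi_\omega P^i_{\sigma^{-i}\omega}\phi_{\sigma^{-i}\omega}$ are bounded and Lipschitz (via Proposition~\ref{reg}), so Proposition~\ref{ann-decay} applies directly and gives $\|\sum_{k\le n}P^k\breve\phi\|_{L^{q/2}}=O(1)$; the decomposition itself then comes from a weak-compactness (Butzer--Westphal) argument, not a convergent series. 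The resulting maximal bound (Corollary~\ref{smb}) is a clean $O(n^{1/2})$ with \emph{no} loss. Your truncation route is not obviously salvageable — the truncation error is only small in $L^{q/2}$ and (P8) needs a Lipschitz observable, so you would have to quantify $P^k$ on an unbounded remainder — and in any case it is not where the $1/(2q)$ comes from.

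\textbf{The time reversal.} The filtration $\{(F_\omega^i)^{-1}\mathcal B_{\sigma^i\omega}\}_i$ is \emph{decreasing} in $i$, so the Skorokhod embedding cannot be applied to the forward sums $M_k^\omega=\sum_{i<k}\psi_{\sigma^i\omega}\circ F_\omega^i$ as you write ($M_k^\omega=B(\tau_k)$); it must be applied, for each fixed $n$, to the reversed row $\sum_{j\le i}\psi_{\sigma^{n-j}\omega}\circ F_\omega^{n-j}$, which is a forward martingale for the increasing filtration $\{(F_\omega^{n-i})^{-1}\mathcal B_{\sigma^{n-i}\omega}\}_i$. The embedded process is then time-changed by the reversed conditional variances $V_{n,k}^\omega$, and one still has to match this parameterization with the forward one ($\eta_k^2$, resp.\ $\Sigma_k^2$) used in $\overline W_n^\omega$. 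In the paper this is the content of Lemma~\ref{wnxn} together with Proposition~\ref{Zn}: the index discrepancy between the two time changes is $O(n^{1/2})$ in $L^{q/2}$, and bounding the martingale increments over a random window of that length costs the block-maxima estimate $\|Z_n^\omega\|_{L^q}\le Cn^{\frac14+\frac1{2q}}$ — \emph{this} is the actual source of the exponent $-\frac14+\frac1{2q}$ (the same mechanism appears in Lemma~\ref{wmn} when comparing $N_n^\omega$ with $r_n^\omega$). Your phrase ``in the form valid for a reverse martingale, or after a time reversal'' conceals this entire step; without it the claimed rate is not established. (A minor additional point: your assertion that $\sum_{i<k}\int V_i\,\rmd\mu_\omega$ agrees with $\Sigma_k^2$ up to $O_\omega(1)$ is too strong; the cross term $\int\chi_\omega\,M_k^\omega\,\rmd\mu_\omega$ only gives $O(n^{\frac12+\frac1q})$ as in Lemma~\ref{var}, though this happens not to affect your target exponent.)
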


\begin{rem}\label{wass}
(1) Since $\mathcal{W}_{p}$ $\le \mathcal{W}_{q}$ for $p\le q$, Theorem~\ref{thnon2} provides an estimate for $\mathcal{W}_{p}(\overline W_{n}^\omega,B)$ for all $1\le p\le q/4$, $q\ge4$.

(2) The L\'{e}vy-Prokhorov distance $\pi(\mu,\nu)$, between two probability measures $\mu$ and $\nu$, is defined by
\begin{equation*}
\pi(\mu,\nu):=\inf\{\eps> 0: \mu(A)\le \nu(A^{\eps})+\eps \quad \hbox{for ~all ~closed ~sets~} A\in \mathcal{B}\}.
\end{equation*}
Here $A^{\eps}$ denotes the $\eps$-neighborhood of $A$. The quenched WIP in Theorem~\ref{thnon1}(3) is equivalent to $\lim_{n\to\infty}\pi(W_n^\omega, W)=0$ for a.e. $w\in \Omega$.

Our result also implies a convergence rate $\pi(\overline W_n^\omega, B)=O(n^{-\frac{q-2}{4(q+4)}})$ with respect to the L\'{e}vy-Prokhorov distance. Indeed, for two given probability measures $\mu$ and $\nu$, we have, for
$p\ge 1$, $\pi(\mu,\nu)\le \mathcal{W}_{p}(\mu,\nu)^{\frac{p}{p+1}}$.

(3)The exact convergence rate $r(q)=n^{-\frac{1}{4}+\frac{1}{2q}}$ can potentially be improved with more careful arguments. However, we emphasize that the main feature of the convergence rate we obtain, specifically $\inf r=n^{-\frac{1}{4}}$, is essentially optimal under the methods used. This result adopts the martingale version of Skorokhod embedding theorem (see Lemma~\ref{xnw}). As indicated  in \cite{B73,S72}, this method cannot yield a better result than $O(n^{-\frac{1}{4}})$.
\end{rem}

\section{Martingale-coboundary decompositions}
In this section, we introduce two types of martingale-coboundary decompositions for the RYT. In Section~4.1, we construct a reverse martingale difference sequence for the observable $\phi$. In Section~4.2, to control the sum of the square of the approximating martingale, we derive a secondary martingale-coboundary decomposition for the observable $\breve \phi$, which will be defined later.

\subsection{The primary martingale-coboundary decomposition}
\begin{lem}\label{pmd}
Consider the RYT satisfying (P1)-(P8) with $a>5$. Let $\delta>0$ be small s.t. $q=\frac{a-1-\delta}{\delta+1}\ge4$.
We suppose that $\phi\in \mathcal F_{\beta}^{K}$ and $\int\phi_\omega \rmd\mu_{\omega}=0$.
Let
\[
\chi_\omega:=\sum_{i\ge 1}P_{\sigma^{-i}\omega}^{i}(\phi_{\sigma^{-i}\omega}), \quad \chi(\omega, \cdot):=\chi_\omega(\cdot),
\]
\[
\psi_\omega:=\phi_{\omega}-\chi_{\sigma\omega}\circ F_\omega+\chi_\omega,\quad \psi(\omega,\cdot):=\psi_\omega(\cdot).
\]
Then $\chi,\psi\in L^q(\Delta,\mu)$, and for a.e. $\omega\in \Omega$,
\begin{align*}
\psi_\omega\in  {\rm ker} P_\omega, \quad
\sum_{i=0}^{n-1}\phi_{\sigma^i \omega}\circ F_\omega^i=\sum_{i=0}^{n-1}\psi_{\sigma^{i} \omega}\circ F_\omega^{i}+
\chi_{\sigma^n\omega}\circ F_\omega^n-\chi_\omega,
\end{align*}
namely, this is a martingale-coboundary decomposition where $\{\psi_{\sigma^{i} \omega}\circ F_\omega^{i}\}_{i\ge 0}$ is a reverse martingale difference sequence w.r.t. $\{(F_\omega^{i})^{-1}\mathcal B_{\sigma^{i} \omega}\}_{i\ge 0}$.
\end{lem}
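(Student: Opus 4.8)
The plan is to establish the martingale-coboundary decomposition following the classical Gordin-type approach, but carefully tracking the $L^q$ integrability using the quenched decay estimate from Proposition~\ref{ann-decay}. First I would verify that the series defining $\chi_\omega$ converges in $L^q(\Delta,\mu)$. The key point is that $\phi\in \mathcal F_\beta^K$ with $\int \phi_\omega\,\rmd\mu_\omega=0$, so each term $P_{\sigma^{-i}\omega}^i(\phi_{\sigma^{-i}\omega})$ is the transfer operator applied to a mean-zero Lipschitz observable. By Proposition~\ref{reg}, $\Phi_i:=P_{\cdot}^i\phi_{\cdot}$ remains in a Lipschitz class $\mathcal F_\beta^{K\circ\sigma^{-i}+C_{h,F}}$ with a uniform Lipschitz constant $C_\phi$; combined with the quenched decay of correlation this gives pointwise smallness, but for $L^q$ convergence I would interpolate: the $L^\infty$ bound on $\Phi_i$ (uniform in $i$, from Proposition~\ref{reg} and the exponential tail \eqref{k} of $K$) together with the $L^1$ decay $\E\int|P_\omega^n(\phi_\omega-\int\phi_\omega\rmd\mu_\omega)|\,\rmd\mu_{\sigma^n\omega}\ll n^{-(a-1-\delta)}$ from Proposition~\ref{ann-decay} yields, via $\|g\|_q^q\le \|g\|_\infty^{q-1}\|g\|_1$, that $\E\int|\Phi_i|^q\,\rmd\mu_{\sigma^i\omega}\ll i^{-(a-1-\delta)}$. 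Since $q=\frac{a-1-\delta}{\delta+1}$, we have $a-1-\delta=q(\delta+1)=q\delta+q > 1$ (indeed $q\ge 4$ forces $a-1-\delta$ large), so $\sum_i (i^{-(a-1-\delta)})^{1/q}=\sum_i i^{-(\delta+1)}<\infty$, giving absolute convergence of the series in $L^q(\Delta,\mu)$ by the triangle inequality in $L^q$. Hence $\chi\in L^q(\Delta,\mu)$, and then $\psi=\phi-\chi\circ F+\chi\in L^q(\Delta,\mu)$ as well since $\chi\circ F$ has the same $L^q$ norm as $\chi$ (the skew product $F$ preserves $\mu$).

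Next I would check the algebraic identities. The telescoping identity
\[
\sum_{i=0}^{n-1}\phi_{\sigma^i\omega}\circ F_\omega^i=\sum_{i=0}^{n-1}\psi_{\sigma^i\omega}\circ F_\omega^i+\chi_{\sigma^n\omega}\circ F_\omega^n-\chi_\omega
\]
is immediate from the definition $\psi_\omega=\phi_\omega-\chi_{\sigma\omega}\circ F_\omega+\chi_\omega$ by summing and collapsing the coboundary terms (using $\chi_{\sigma^{i+1}\omega}\circ F_\omega^{i+1}=(\chi_{\sigma\omega}\circ F_\omega)\circ F_\omega^i$ fibrewise, i.e. $\chi_{\sigma^{i}\omega}\circ F_{\sigma\omega}^{\,\cdot}$ composed appropriately — more precisely $(\chi_{\sigma(\sigma^i\omega)}\circ F_{\sigma^i\omega})\circ F_\omega^i=\chi_{\sigma^{i+1}\omega}\circ F_\omega^{i+1}$). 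The substantive claim is $\psi_\omega\in\ker P_\omega$. Applying $P_\omega$ and using that $P_\omega$ is linear: $P_\omega\psi_\omega=P_\omega\phi_\omega-P_\omega(\chi_{\sigma\omega}\circ F_\omega)+P_\omega\chi_\omega$. By the defining property \eqref{tran} of the transfer operator, $P_\omega(\chi_{\sigma\omega}\circ F_\omega)=\chi_{\sigma\omega}\cdot P_\omega(1)$; but $\mu_\omega$ is $F_\omega$-equivariant so $P_\omega 1=1$ (this follows from $(F_\omega)_*\mu_\omega=\mu_{\sigma\omega}$ in Proposition~\ref{suph}), hence $P_\omega(\chi_{\sigma\omega}\circ F_\omega)=\chi_{\sigma\omega}$. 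On the other hand, $\chi_\omega=\sum_{i\ge 1}P_{\sigma^{-i}\omega}^i(\phi_{\sigma^{-i}\omega})$, and writing $P_{\sigma^{-i}\omega}^i=P_{\sigma^{-1}\omega}\circ P_{\sigma^{-i}\omega}^{i-1}\cdots$ one gets $P_\omega\chi_\omega = \sum_{i\ge 1}P_\omega P_{\sigma^{-i}\omega}^i(\phi_{\sigma^{-i}\omega})=\sum_{i\ge 1}P_{\sigma^{-i}\omega}^{i+1}(\phi_{\sigma^{-i}\omega})=\sum_{j\ge 2}P_{\sigma^{-j+1}\sigma\omega\ldots}$; reindexing $j=i+1$ this equals $\chi_{\sigma\omega}-P_{\omega}(\phi_\omega)$ (the $j=1$ term of $\chi_{\sigma\omega}$ being $P_\omega\phi_\omega$). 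Therefore $P_\omega\chi_\omega=\chi_{\sigma\omega}-P_\omega\phi_\omega$, and substituting gives $P_\omega\psi_\omega=P_\omega\phi_\omega-\chi_{\sigma\omega}+(\chi_{\sigma\omega}-P_\omega\phi_\omega)=0$. One must be slightly careful that this manipulation of the infinite series under $P_\omega$ is justified, which it is because the series converges in $L^1$ (indeed $L^q$) and $P_\omega$ is bounded on $L^1$ by \eqref{con}.

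Finally, the reverse-martingale property: I would show $\E_{\mu_\omega}[\psi_{\sigma^i\omega}\circ F_\omega^i \mid (F_\omega^{i+1})^{-1}\mathcal B_{\sigma^{i+1}\omega}]=0$. By \eqref{cvm} applied with $\psi$ replaced by $\psi$, this conditional expectation equals $[P_{\sigma^i\omega}(\psi_{\sigma^i\omega})]\circ F_\omega^{i+1}$, which vanishes since $\psi_{\sigma^i\omega}\in\ker P_{\sigma^i\omega}$ (applying the $\ker P$ claim at base point $\sigma^i\omega$). The $\sigma$-algebras $(F_\omega^i)^{-1}\mathcal B_{\sigma^i\omega}$ are decreasing in $i$ because $F_\omega^{i+1}=F_{\sigma^i\omega}\circ F_\omega^i$ so $(F_\omega^{i+1})^{-1}\mathcal B_{\sigma^{i+1}\omega}\subset (F_\omega^i)^{-1}\mathcal B_{\sigma^i\omega}$, which is precisely the setting for a reverse martingale difference sequence. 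I expect the main obstacle to be the $L^q$ convergence of the series for $\chi$: one needs the interpolation argument to convert the $L^1$-decay of correlations from Proposition~\ref{ann-decay} into an $L^q$-summable bound, and this is exactly where the hypothesis $a>5$ (equivalently $q\ge 4$, so that $a-1-\delta=q(\delta+1)$ makes $\sum i^{-(\delta+1)}$ converge) is used. The algebraic steps are routine once the series is known to live in $L^q$.
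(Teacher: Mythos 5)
Your proposal is correct and follows essentially the same route as the paper: the $L^q$ bound on $\chi$ via the interpolation $\|g\|_q^q\le\|g\|_\infty^{q-1}\|g\|_1$ combined with the $L^1$ decay of Proposition~\ref{ann-decay} (yielding $\sum_i i^{-(a-1-\delta)/q}=\sum_i i^{-(\delta+1)}<\infty$), the telescoping identity, the series reindexing to get $P_\omega\chi_\omega=\chi_{\sigma\omega}-P_\omega\phi_\omega$, and the reverse-martingale property via \eqref{cvm}. The only cosmetic difference is that the paper obtains the uniform $L^\infty$ bound on $P^i\phi$ directly from the contraction \eqref{con} rather than from Proposition~\ref{reg}.
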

\begin{proof}
Since $\phi\in L^\infty(\Delta,\mu)$, $\|P_\omega\phi_\omega\|_{L^\infty(\mu_{\sigma\omega})}\le \|\phi_\omega\|_{L^\infty(\mu_{\omega})}\le M_\phi$.
By Proposition~\ref{ann-decay}, we have
\begin{align*}
&\big\|\chi\big\|_{L^q(\Delta,\mu)}\le
\sum_{i\ge 1}\big\|P_{\sigma^{-i}\omega}^{i}(\phi_{\sigma^{-i}\omega})\big\|_{L^q(\Delta,\mu)}\\
&\le \sum_{i\ge 1}\Big(\E\int|P_{\sigma^{-i}\omega}^{i}(\phi_{\sigma^{-i}\omega})|\cdot |P_{\sigma^{-i}\omega}^{i}(\phi_{\sigma^{-i}\omega})|^{q-1}\rmd\mu_{\omega}\Big)^{1/q}\\
&\le \sum_{i\ge 1}\Big(\E\int|P_{\sigma^{-i}\omega}^{i}(\phi_{\sigma^{-i}\omega})|\rmd\mu_{\omega}\Big)^{1/q}
M_\phi^{\frac{q-1}{q}}\\
&\ll M_\phi^{\frac{q-1}{q}}\sum_{i\ge 1}i^{-\frac{a-1-\delta}{q}}<\infty.
\end{align*}
Then $\psi\in L^q(\Delta,\mu)$ follows from $\chi\in L^q(\Delta,\mu)$ and $\phi\in L^\infty(\Delta,\mu)$.

Moreover, for a.e. $\omega\in \Omega$,
\begin{align*}
P_\omega\psi_\omega&=P_\omega\phi_{\omega}-P_\omega(\chi_{\sigma\omega}\circ F_\omega)+P_\omega\chi_\omega
=P_\omega\phi_{\omega}-\chi_{\sigma\omega}+P_\omega\chi_\omega\\
&=P_\omega\phi_{\omega}-\sum_{i\ge 1}P_{\sigma^{-i}\sigma\omega}^{i}(\phi_{\sigma^{-i}\sigma\omega})+
\sum_{i\ge 1}P_{\sigma^{-i}\omega}^{i+1}(\phi_{\sigma^{-i}\omega})=0.
\end{align*}
By \eqref{cvm}, $\E_{\mu_\omega}[\psi_{\sigma^i\omega}\circ F_\omega^i|(F_\omega^{i+1})^{-1}\mathcal B_{\sigma^{i+1}\omega}]
=[P_{\sigma^i\omega}(\psi_{\sigma^i\omega})]\circ F_\omega^{i+1}=0$. So $\{\psi_{\sigma^{i} \omega}\circ F_\omega^{i}\}_{i\ge 0}$ is a reverse martingale difference sequence w.r.t. $\{(F_\omega^{i})^{-1}\mathcal B_{\sigma^{i} \omega}\}_{i\ge 0}$.
\end{proof}

\subsection{The secondary martingale-coboundary decomposition}

In this subsection, we first define $\breve{\phi}_\omega:=\big[P_\omega(\psi_{\omega}^{2}-\int\psi_{\omega}^{2}\rmd\mu_\omega)\big]\circ F_\omega$ and
$\breve{\phi}(\omega, \cdot):=\breve{\phi}_\omega(\cdot).$


To obtain the quenched invariance principle and estimate the corresponding convergence rate, we need to control the Birkhoff sums of $\breve{\phi}_\omega$. A martingale-coboundary decomposition for $\breve{\phi}_\omega$ is therefore highly useful; this is the focus of the current subsection.

\begin{lem}\label{smd}
Let $p\ge 2$. Suppose that $\breve{\phi}\in L^p(\Delta, \mu)$ with $\int \breve{\phi}_\omega \rmd\mu_\omega=0$, and that
\[
\Big\|\sum_{k=1}^{n}P_{\sigma^{-k}\omega}^{k}\breve{\phi}_{\sigma^{-k}\omega}\Big\|_{L^p(\Delta,\mu)}=O(1) \hbox{~as~} n\to\infty.
\]
Then there exist $\breve{\psi}$, $\breve{\chi}\in L^p(\Delta, \mu)$ s.t. for a.e. $\omega\in \Omega$,
\[
\breve{\psi}_\omega=\breve{\phi}_{\omega}-\breve{\chi}_{\sigma\omega}\circ F_\omega+\breve{\chi}_\omega,\quad \breve{\psi}_\omega\in {\rm ker} P_\omega.
\]
\end{lem}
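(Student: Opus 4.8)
The plan is to construct $\breve{\chi}_\omega$ as a telescoping sum built from the pulled-back transfer operator iterates of $\breve{\phi}$, exactly paralleling the primary decomposition in Lemma~\ref{pmd}, but now using the $L^p$-summability hypothesis in place of Proposition~\ref{ann-decay}. Concretely, I would set
\[
\breve{\chi}_\omega:=\sum_{k\ge 1}P_{\sigma^{-k}\omega}^{k}\breve{\phi}_{\sigma^{-k}\omega},
\qquad
\breve{\psi}_\omega:=\breve{\phi}_\omega-\breve{\chi}_{\sigma\omega}\circ F_\omega+\breve{\chi}_\omega,
\]
whenever the series converges. The point of the hypothesis is precisely that the partial sums $\breve{\chi}_\omega^{(n)}:=\sum_{k=1}^{n}P_{\sigma^{-k}\omega}^{k}\breve{\phi}_{\sigma^{-k}\omega}$ are bounded in $L^p(\Delta,\mu)$, so I cannot get norm convergence for free from absolute summability of the terms; instead I would argue convergence in $L^p$ by a different route.

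\textbf{Step 1 (convergence of the defining series).} I would show that $(\breve{\chi}_\omega^{(n)})_n$ converges in $L^p(\Delta,\mu)$. The $\omega$-fibred structure gives the key identity $\breve{\chi}_{\sigma\omega}^{(n)}\circ F_\omega = P_\omega\big(\breve{\chi}_\omega^{(n)}\big)\circ F_\omega\cdot(\ldots)$ — more precisely, using \eqref{cvm}, $P_\omega(\breve{\chi}_\omega^{(n)})=\sum_{k=1}^{n}P_{\sigma^{-k}\omega}^{k+1}\breve{\phi}_{\sigma^{-k}\omega}=\breve{\chi}_{\sigma\omega}^{(n+1)}-P_\omega\breve{\phi}_\omega$. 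The cleanest way to get genuine convergence is to exploit the martingale/orthogonality structure: writing $\breve{\chi}_\omega^{(n)}=\sum_{k=1}^n g_k$ with $g_k:=P_{\sigma^{-k}\omega}^{k}\breve{\phi}_{\sigma^{-k}\omega}$, one checks that $P_\omega$-images make the increments $g_{k}-(\text{part of }g_{k+1}\text{ lifted})$ a reverse-martingale-type difference, so $\|\breve{\chi}_\omega^{(n)}\|_{L^2}^2$ is essentially a sum of squares; boundedness of the partial sums in $L^p\supseteq L^2$ then forces $\sum_k\|g_k\|_{L^2}^2<\infty$, hence $L^2$-convergence, and a uniform-integrability/interpolation argument upgrades this to $L^p$-convergence using the $O(1)$ bound. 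This is the step I expect to be the main obstacle: extracting actual convergence (not just boundedness) of the partial sums, since $L^p$ balls are not compact; the resolution is to locate the hidden orthogonality coming from $\ker P_\omega$ and the Markov structure of the tower.

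\textbf{Step 2 (identification of $\breve{\psi}_\omega\in\ker P_\omega$).} Once $\breve{\chi}\in L^p(\Delta,\mu)$ is well defined, I apply $P_\omega$ to the definition of $\breve{\psi}_\omega$ and use $P_\omega(\breve{\chi}_{\sigma\omega}\circ F_\omega)=\breve{\chi}_{\sigma\omega}$ together with the telescoping
\[
P_\omega\breve{\phi}_\omega+P_\omega\breve{\chi}_\omega
=P_\omega\breve{\phi}_\omega+\sum_{k\ge 1}P_{\sigma^{-k}\omega}^{k+1}\breve{\phi}_{\sigma^{-k}\omega}
=\sum_{j\ge 1}P_{\sigma^{-j}\sigma\omega}^{j}\breve{\phi}_{\sigma^{-j}\sigma\omega}
=\breve{\chi}_{\sigma\omega},
\]
so $P_\omega\breve{\psi}_\omega=0$, verbatim as in the proof of Lemma~\ref{pmd}. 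Finally I record that $\breve{\psi}=\breve{\phi}-\breve{\chi}\circ F+\breve{\chi}\in L^p(\Delta,\mu)$ since $\breve{\phi}\in L^p$ by hypothesis and $\breve{\chi}\in L^p$ by Step~1 (the composition with $F$ is an $L^p$-isometry for the fibrewise-invariant measure). The mean-zero condition on $\breve{\phi}$ is what guarantees each $P_{\sigma^{-k}\omega}^{k}\breve{\phi}_{\sigma^{-k}\omega}$ is itself mean-zero, keeping the series formally consistent. That completes the construction.
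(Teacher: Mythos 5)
Your Step 2 is fine and matches the paper's identification of $\ker P_\omega$, but Step 1 contains a genuine gap that the paper's proof is specifically designed to avoid. You propose to define $\breve{\chi}_\omega$ as the strong $L^p$-limit of the partial sums $\breve{\chi}_\omega^{(n)}=\sum_{k=1}^{n}P_{\sigma^{-k}\omega}^{k}\breve{\phi}_{\sigma^{-k}\omega}$, and to get convergence from a claimed ``hidden orthogonality'' making $\|\breve{\chi}_\omega^{(n)}\|_{L^2}^2$ essentially a sum of squares. This orthogonality does not exist: the natural martingale structure here is that $(P_\omega^n\phi_\omega)\circ F_\omega^n=\E_{\mu_\omega}[\phi_\omega\,|\,(F_\omega^n)^{-1}\mathcal B_{\sigma^n\omega}]$ is a \emph{reverse martingale in $n$} (a sequence of conditional expectations with respect to a decreasing filtration), not a sequence of martingale \emph{differences}; the inner products $\langle P_{\sigma^{-j}\omega}^{j}\breve{\phi}_{\sigma^{-j}\omega},P_{\sigma^{-k}\omega}^{k}\breve{\phi}_{\sigma^{-k}\omega}\rangle$ have no reason to vanish for $j\ne k$. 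More fundamentally, boundedness of the partial sums of $\sum_k P^k\breve\phi$ in a reflexive $L^p$ does not imply their strong convergence (already for a contraction $T$ on Hilbert space, $\sup_n\|\sum_{k\le n}T^k\phi\|<\infty$ gives $\phi=(I-T)\chi$ but $\sum_{k\le n}T^k\phi=\chi-T^{n+1}\chi$ need not converge). The secondary observable $\breve\phi$ is exactly the situation where one cannot expect absolute or strong summability, which is why the hypothesis is only an $O(1)$ bound. Your proposed interpolation upgrade from $L^2$ to $L^p$ convergence is also not available from boundedness in $L^p$ alone.

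The paper's resolution (following Butzer--Westphal) is to pass to the Ces\`aro-type averages $\sum_{k=1}^{n}(1-\tfrac{k}{n})P_{\sigma^{-k}\omega}^{k}\breve{\phi}_{\sigma^{-k}\omega}$, which are bounded in $L^p$ by the hypothesis and the identity $\sum_{k=1}^{n}(n-k)P_{\sigma^{-k}\omega}^{k}\breve{\phi}_{\sigma^{-k}\omega}=\sum_{j=1}^{n-1}\sum_{k=1}^{j}P_{\sigma^{-k}\omega}^{k}\breve{\phi}_{\sigma^{-k}\omega}$, and then to extract a \emph{weakly} convergent subsequence using reflexivity of $L^p$. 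The weak limit $\breve\chi$ is then shown, by testing against $f\in L^{q}$ and using that the difference of the shifted averages equals $P_\omega\breve\phi_\omega-\frac1n\sum_{k=0}^{n-1}P_{\sigma^{-k}\omega}^{k+1}\breve{\phi}_{\sigma^{-k}\omega}$, to satisfy $\breve{\phi}_{\omega}=\breve{\chi}_{\sigma\omega}\circ F_\omega-(P_\omega\breve{\chi}_{\omega})\circ F_\omega$ a.e., after which your Step 2 algebra goes through. If you want to salvage your approach, you must replace strong convergence of the partial sums by this weak-compactness argument; as written, Step 1 would fail.
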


\begin{proof}
We first claim that if
\[
\Big\|\sum_{k=1}^{n}P_{\sigma^{-k}\omega}^{k}\breve{\phi}_{\sigma^{-k}\omega}\Big\|_{L^p(\Delta,\mu)}=O(1) \hbox{~as~} n\to\infty,
\]
then there exists $\breve{\chi}\in L^p(\Delta, \mu)$ with $\breve{\chi}_\omega(\cdot)=\breve{\chi}(\omega, \cdot)$, s.t. for a.e. $\omega\in \Omega$,
\[
\breve{\phi}_{\omega}=\breve{\chi}_{\sigma\omega}\circ F_\omega-(P _\omega\breve{\chi}_{\omega})\circ F_\omega.
\]
Let $\breve{\psi}_{\omega}=\breve{\chi}_{\omega}-(P_\omega\breve{\chi}_{\omega})\circ F_\omega$ and
$\breve{\psi}(\omega, \cdot)=\breve{\psi}_\omega(\cdot)$. Then for a.e. $\omega\in \Omega$,
$P_\omega\breve{\psi}_\omega=0$ and
\begin{align*}
\breve{\phi}_{\omega}
&=\breve{\chi}_{\sigma\omega}\circ F_\omega-(P _\omega\breve{\chi}_{\omega})\circ F_\omega\\
&=\breve{\chi}_{\sigma\omega}\circ F_\omega+\breve{\psi}_{\omega}-\breve{\chi}_{\omega},
\end{align*}
i.e. $\breve{\psi}_\omega=\breve{\phi}_{\omega}-\breve{\chi}_{\sigma\omega}\circ F_\omega+\breve{\chi}_\omega$. Also, since $\breve{\chi}, \breve{\phi}\in L^p(\Delta, \mu)$, we have $\breve{\psi}\in L^p(\Delta, \mu)$. Hence the result holds.

Next, we aim to proof the claim, generalizing the ideas in~\cite{BW71}. In view of the identity
\[
\sum_{k=1}^{n}(n-k)P_{\sigma^{-k}\omega}^{k}\breve{\phi}_{\sigma^{-k}\omega}=
\sum_{j=1}^{n-1}\sum_{k=1}^{j}P_{\sigma^{-k}\omega}^{k}\breve{\phi}_{\sigma^{-k}\omega}
\]
and the present assumption, we have
\begin{align*}
&\Big\|\sum_{k=1}^{n}(1-\frac{k}{n})P_{\sigma^{-k}\omega}^{k}\breve{\phi}_{\sigma^{-k}\omega}\Big\|_{L^p(\Delta, \mu)}
=\Big\|\frac{1}{n}\sum_{j=1}^{n-1}\sum_{k=1}^{j}P_{\sigma^{-k}\omega}^{k}\breve{\phi}_{\sigma^{-k}\omega}\Big\|_{L^p(\Delta, \mu)}\\
&\le \frac{1}{n}\sum_{j=1}^{n-1}\Big\|\sum_{k=1}^{j}P_{\sigma^{-k}\omega}^{k}\breve{\phi}_{\sigma^{-k}\omega}\Big\|_{L^p(\Delta, \mu)}=O(1) \hbox{~as~} n\to \infty.
\end{align*}
Since in a reflexible Banach space, bounded sequences possess a weakly convergent sequence, it follows that there exists a sequence, still denoted by $n$, and
$\breve{\chi}\in L^p(\Delta, \mu)$ with $\breve{\chi}(\omega, \cdot)=\breve{\chi}_\omega(\cdot)$ s.t.
\[
\lim_{n\to\infty}\E\int f_\omega \cdot \sum_{k=1}^{n}(1-\frac{k}{n})P_{\sigma^{-k}\omega}^{k}\breve{\phi}_{\sigma^{-k}\omega}\rmd\mu_\omega=\E\int f_\omega \breve{\chi}_\omega \rmd\mu_\omega,
\]
for each $f\in L^q(\Delta, \mu)$ with $f(\omega, \cdot)=f_\omega(\cdot)$ and $\frac{1}{p}+\frac{1}{q}=1$.

Similarly,
\[
\lim_{n\to\infty}\E\int f_{\sigma\omega} \Big[\sum_{k=1}^{n}(1-\frac{k}{n})P_{\sigma^{-k}\sigma\omega}^{k}\breve{\phi}_{\sigma^{-k}\sigma\omega}-
\sum_{k=1}^{n}(1-\frac{k}{n})P_{\sigma^{-k}\omega}^{k+1}\breve{\phi}_{\sigma^{-k}\omega}\Big]\rmd\mu_{\sigma\omega}
=\E\int f_{\sigma\omega}(\breve{\chi}_{\sigma\omega}-P_\omega\breve{\chi}_{\omega} )\rmd\mu_{\sigma\omega}.
\]
Since
$\lim_{n\to\infty}\frac{1}{n}\big\|\sum_{k=1}^{n}P_{\sigma^{-k}\omega}^{k}\breve{\phi}_{\sigma^{-k}\omega}\big\|_{L^p(\Delta, \mu)}=0$, and
\begin{align*}
&\sum_{k=1}^{n}(1-\frac{k}{n})P_{\sigma^{-k}\sigma\omega}^{k}\breve{\phi}_{\sigma^{-k}\sigma\omega}-
\sum_{k=1}^{n}(1-\frac{k}{n})P_{\sigma^{-k}\omega}^{k+1}\breve{\phi}_{\sigma^{-k}\omega}\\
&=P_\omega\breve{\phi}_{\omega}-\frac{1}{n}\sum_{k=0}^{n-1}P_{\sigma^{-k}\omega}^{k+1}\breve{\phi}_{\sigma^{-k}\omega},
\end{align*}
we have
\begin{align*}
&\lim_{n\to\infty}\E\int f_{\sigma\omega} \Big[\sum_{k=1}^{n}(1-\frac{k}{n})P_{\sigma^{-k}\sigma\omega}^{k}\breve{\phi}_{\sigma^{-k}\sigma\omega}-
\sum_{k=1}^{n}(1-\frac{k}{n})P_{\sigma^{-k}\omega}^{k+1}\breve{\phi}_{\sigma^{-k}\omega}\Big]\rmd\mu_{\sigma\omega}\\
=&\lim_{n\to\infty}\E\int f_{\sigma\omega} \Big[P_\omega\breve{\phi}_{\omega}-\frac{1}{n}\sum_{k=0}^{n-1}P_{\sigma^{-k}\omega}^{k+1}\breve{\phi}_{\sigma^{-k}\omega}\Big]
\rmd\mu_{\sigma\omega}\\
=&\E\int f_{\sigma\omega} P_\omega\breve{\phi}_{\omega} \rmd\mu_{\sigma\omega}.
\end{align*}
Hence
\[
\E\int f_{\sigma\omega}(\breve{\chi}_{\sigma\omega}-P_\omega\breve{\chi}_{\omega} )\rmd\mu_{\sigma\omega}
=\E\int f_{\sigma\omega} P_\omega\breve{\phi}_{\omega}\rmd\mu_{\sigma\omega}
\]
for each $f\in L^q(\Delta, \mu)$, i.e.
\[
\E\int f_{\sigma\omega}\circ F_\omega(\breve{\chi}_{\sigma\omega}-P_\omega\breve{\chi}_{\omega} )\circ F_\omega \rmd\mu_{\omega}
=\E\int f_{\sigma\omega}\circ F_\omega \breve{\phi}_{\omega}\rmd\mu_{\omega}.
\]
So for a.e. $\omega\in \Omega$,
$\breve{\phi}_{\omega}=\breve{\chi}_{\sigma\omega}\circ F_\omega-(P_\omega\breve{\chi}_{\omega})\circ F_\omega$. The claim holds.
\end{proof}

In our setting, we consider the RYT satisfying (P1)-(P8) with $a>5$, and let $\delta>0$ be small s.t. $q=\frac{a-1-\delta}{\delta+1}\ge4$.
Suppose that $\phi\in \mathcal F_{\beta}^{K}$ and $\int \phi_\omega \rmd\mu_\omega=0$.
Then $\breve{\phi}\in L^{q/2}(\Delta, \mu)$ and $\int \breve{\phi}_\omega \rmd\mu_\omega=0$. If we have
\[
\Big\|\sum_{k=1}^{n}P_{\sigma^{-k}\omega}^{k}\breve{\phi}_{\sigma^{-k}\omega}\Big\|_{L^{q/2}(\Delta,\mu)}=O(1) \quad\hbox{~as~} n\to\infty,
\]
then by Lemma~\ref{smd}, there exist $\breve{\psi}$, $\breve{\chi}\in L^{q/2}(\Delta, \mu)$ s.t. for a.e. $\omega\in \Omega$,
\[\breve{\psi}_\omega=\breve{\phi}_{\omega}-\breve{\chi}_{\sigma\omega}\circ F_\omega+\breve{\chi}_\omega, \quad
\breve{\psi}_\omega\in {\rm ker} P_\omega.\]
We refer it as a {\em secondary} martingale-coboundary decomposition for $\breve{\phi}$.

\begin{lem}
\[
\Big\|\sum_{k=1}^{n}P_{\sigma^{-k}\omega}^{k}\breve{\phi}_{\sigma^{-k}\omega}\Big\|_{L^{q/2}(\Delta,\mu)}=O(1) \quad\hbox{~as~} n\to\infty.
\]
\end{lem}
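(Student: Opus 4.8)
The plan is to uncover a hidden telescoping inside $\breve\phi$ and then reduce the bound to an annealed decay estimate for a family of elementary observables.

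First I would record the standard transfer-operator identities (valid since each $\mu_\omega$ is a probability measure, cf.\ the proof of Lemma~\ref{pmd}): $P_\omega 1=1$, $\int P_\omega f\,\rmd\mu_{\sigma\omega}=\int f\,\rmd\mu_\omega$, $P_\omega(g\circ F_\omega)=g$ and $P_\omega\big(f\cdot(g\circ F_\omega)\big)=g\,P_\omega f$. Writing $r_\omega:=\phi_\omega+\chi_\omega$, one has $\chi_{\sigma\omega}=P_\omega\phi_\omega+P_\omega\chi_\omega=P_\omega r_\omega$, so Lemma~\ref{pmd} reads $\psi_\omega=r_\omega-(P_\omega r_\omega)\circ F_\omega$; squaring and applying $P_\omega$, a short computation with the above identities gives $P_\omega(\psi_\omega^2)=P_\omega(r_\omega^2)-(P_\omega r_\omega)^2$. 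Setting $G_\omega:=r_\omega^2-\int r_\omega^2\,\rmd\mu_\omega$ and $H_\omega:=\chi_\omega^2-\int\chi_\omega^2\,\rmd\mu_\omega$, and using $P_\omega r_\omega=\chi_{\sigma\omega}$ together with $\int\psi_\omega^2\,\rmd\mu_\omega=\int r_\omega^2\,\rmd\mu_\omega-\int\chi_{\sigma\omega}^2\,\rmd\mu_{\sigma\omega}$ (obtained by integrating the previous identity), I arrive at the key identity
\[
\breve\phi_\omega=(P_\omega G_\omega)\circ F_\omega-H_{\sigma\omega}\circ F_\omega .
\]
Writing $G_\omega=G'_\omega+H_\omega$ with $G'_\omega:=\phi_\omega^2+2\phi_\omega\chi_\omega-\int(\phi_\omega^2+2\phi_\omega\chi_\omega)\,\rmd\mu_\omega$, and applying $P^k_{\sigma^{-k}\omega}$ to $\breve\phi_{\sigma^{-k}\omega}$ while using $P_\omega(g\circ F_\omega)=g$ at each step, one gets $P^k_{\sigma^{-k}\omega}\breve\phi_{\sigma^{-k}\omega}=P^k_{\sigma^{-k}\omega}G'_{\sigma^{-k}\omega}+P^k_{\sigma^{-k}\omega}H_{\sigma^{-k}\omega}-P^{k-1}_{\sigma^{-k+1}\omega}H_{\sigma^{-k+1}\omega}$; the $H$-terms telescope, so
\[
\sum_{k=1}^{n}P^k_{\sigma^{-k}\omega}\breve\phi_{\sigma^{-k}\omega}=\sum_{k=1}^{n}P^k_{\sigma^{-k}\omega}G'_{\sigma^{-k}\omega}+P^n_{\sigma^{-n}\omega}H_{\sigma^{-n}\omega}-H_\omega .
\]
By Lemma~\ref{pmd}, $\chi\in L^q(\Delta,\mu)$, hence $\|H\|_{L^{q/2}(\Delta,\mu)}\le 2\|\chi\|_{L^q(\Delta,\mu)}^2<\infty$, and by the $L^{q/2}$ contraction \eqref{con} and the $\sigma$-invariance of $\P$, $\|P^n_{\sigma^{-n}\omega}H_{\sigma^{-n}\omega}\|_{L^{q/2}(\Delta,\mu)}\le\|H\|_{L^{q/2}(\Delta,\mu)}$ uniformly in $n$. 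So it remains to prove $\big\|\sum_{k=1}^{n}P^k_{\sigma^{-k}\omega}G'_{\sigma^{-k}\omega}\big\|_{L^{q/2}(\Delta,\mu)}=O(1)$.

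For this I would expand $G'$ using the functions $\Phi_i$ of Proposition~\ref{reg}, $\Phi_{i,\omega}=P^i_{\sigma^{-i}\omega}\phi_{\sigma^{-i}\omega}$ (so $\chi=\sum_{i\ge1}\Phi_i$ in $L^q$), together with $\Phi_0:=\phi$, $c_0:=1$ and $c_i:=2$ for $i\ge1$: then $G'=\sum_{i\ge0}c_i\big(\phi\Phi_i-\int\phi\Phi_i\,\rmd\mu_\omega\big)$, the series converging in $L^{q/2}(\Delta,\mu)$. Each $\phi\Phi_i$ is uniformly bounded, $\|\phi_\omega\Phi_{i,\omega}\|_\infty\le M_\phi^2$, and uniformly regular, $\phi\Phi_i\in\mathcal F_\beta^{K+K\circ\sigma^{-i}+C_{h,F}}$ with Lipschitz constant $M_\phi C_\phi$, the weight having (stretched) exponential tails with parameters independent of $i$ by $\sigma$-invariance of $\P$ and a union bound. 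Since $\int\phi_\omega\,\rmd\mu_\omega=0$, Proposition~\ref{ann-decay} gives $\|\Phi_i\|_{L^1(\Delta,\mu)}\le C(i\vee1)^{-(a-1-\delta)}$, hence $\big\|\phi\Phi_i-\int\phi\Phi_i\,\rmd\mu_\omega\big\|_{L^1(\Delta,\mu)}\le C(i\vee1)^{-(a-1-\delta)}$; combining this with Proposition~\ref{ann-decay} applied to $\phi\Phi_i$ and with $\|P_\omega g\|_\infty\le\|g\|_\infty$,
\[
\big\|P^k_{\sigma^{-k}\omega}\big(\phi_{\sigma^{-k}\omega}\Phi_{i,\sigma^{-k}\omega}-\textstyle\int\phi_{\sigma^{-k}\omega}\Phi_{i,\sigma^{-k}\omega}\,\rmd\mu_{\sigma^{-k}\omega}\big)\big\|_{L^1(\Delta,\mu)}\le C\big(\max(k,i\vee1)\big)^{-(a-1-\delta)} .
\]
Interpolating via $\|f\|_{L^{q/2}}\le\|f\|_\infty^{1-2/q}\|f\|_{L^1}^{2/q}$ (legitimate since $\mu$ is a probability measure on $\Delta$) and using $2(a-1-\delta)/q=2(\delta+1)$, the same quantity in $L^{q/2}(\Delta,\mu)$ is $\le C\big(\max(k,i\vee1)\big)^{-2(\delta+1)}$, whence
\[
\Big\|\sum_{k=1}^{n}P^k_{\sigma^{-k}\omega}G'_{\sigma^{-k}\omega}\Big\|_{L^{q/2}(\Delta,\mu)}\le 2C\sum_{k\ge1}\sum_{i\ge0}\big(\max(k,i\vee1)\big)^{-2(\delta+1)}<\infty ,
\]
the double series converging because $\sum_{M\ge1}(2M-1)M^{-2(\delta+1)}<\infty$ as $2(\delta+1)>2$; the bound is uniform in $n$.

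The step I expect to be the main obstacle is the uniformity in $i$ of the constant in Proposition~\ref{ann-decay} applied to $\phi\Phi_i$: one must check that the decay-of-correlations constant $C_{\phi,\psi}$ of (P8), and hence $C_{h,\phi\Phi_i}$, depends on $\psi\in\mathcal F_\beta^K$ only through $M_\psi$, its Lipschitz constant, and the tail parameters of $K$, all of which are bounded uniformly in $i$ above (even a polynomial growth in $i$ would suffice, given the margin in the exponent $2(\delta+1)$). The two interchanges used --- of $P^k$ with the $L^{q/2}$-convergent series for $G'$, and of the two summations --- are legitimate because $G'\mapsto\big(\omega\mapsto P^k_{\sigma^{-k}\omega}G'_{\sigma^{-k}\omega}\big)$ is an $L^{q/2}(\Delta,\mu)$-contraction and the double sum of norms converges absolutely.
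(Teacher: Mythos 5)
Your proposal is correct and follows essentially the same route as the paper: the identity $P_\omega(\psi_\omega^2)=P_\omega(r_\omega^2)-(P_\omega r_\omega)^2$ with $r=\phi+\chi$ is just a repackaging of the paper's direct expansion of $\psi_\omega^2$ (where the cross term is killed by $P_\omega\psi_\omega=0$), your telescoping $H$-terms are the paper's $\chi^2$-boundary terms, and your double sum over $\phi\Phi_i$ with the annealed decay, $L^1$-contraction for $i>k$, and $L^\infty$--$L^1$ interpolation reproduces the paper's estimates of the terms \eqref{P1}--\eqref{P3} (including the $Y_1$/$Y_2$ split). The uniformity-in-$i$ issue you flag for the constant in Proposition~\ref{ann-decay} is present in the paper's argument as well and is resolved exactly as you indicate, via the uniform Lipschitz constant and the stationarity of the weight $K+K\circ\sigma^{-i}+C_{h,F}$.
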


\begin{proof}
Recall that $\breve{\phi}_\omega=\big[P_\omega(\psi_{\omega}^{2}-\int\psi_{\omega}^{2}\rmd\mu_\omega)\big]\circ F_\omega$, so
\[
\breve{\phi}_{\sigma^{-k}\omega}=\big[P_{\sigma^{-k}\omega}(\psi_{\sigma^{-k}\omega}^{2}
-\int\psi_{\sigma^{-k}\omega}^{2}\rmd\mu_{\sigma^{-k}\omega})\big]\circ F_{\sigma^{-k}\omega}.
\]
Hence
\begin{align*}
&\sum_{k=1}^{n}P_{\sigma^{-k}\omega}^{k}\breve{\phi}_{\sigma^{-k}\omega}\\
=&\sum_{k=1}^{n}P_{\sigma^{-k}\omega}^{k}\Big(\big[P_{\sigma^{-k}\omega}(\psi_{\sigma^{-k}\omega}^{2}
-\int\psi_{\sigma^{-k}\omega}^{2}\rmd\mu_{\sigma^{-k}\omega})\big]\circ F_{\sigma^{-k}\omega}\Big)\\
=&\sum_{k=1}^{n}P_{\sigma^{-k}\omega}^{k}(\psi_{\sigma^{-k}\omega}^{2}
-\int\psi_{\sigma^{-k}\omega}^{2}\rmd\mu_{\sigma^{-k}\omega}).
\end{align*}

Since $\psi_\omega=\phi_{\omega}-\chi_{\sigma\omega}\circ F_\omega+\chi_\omega$, we can write
\begin{align*}
&\psi_\omega^2=(\phi_{\omega}-\chi_{\sigma\omega}\circ F_\omega+\chi_\omega)^2\\
=&\phi_{\omega}^2+\chi_{\sigma\omega}^2\circ F_\omega+\chi_\omega^2
+2\phi_{\omega}\chi_\omega-2\phi_{\omega}\chi_{\sigma\omega}\circ F_\omega
-2\chi_{\sigma\omega}\circ F_\omega\chi_\omega\\
=&\phi_{\omega}^2-\chi_{\sigma\omega}^2\circ F_\omega+\chi_\omega^2
+2\phi_{\omega}\chi_\omega+2\chi_{\sigma\omega}^2\circ F_\omega-2\phi_{\omega}\chi_{\sigma\omega}\circ F_\omega
-2\chi_{\sigma\omega}\circ F_\omega\chi_\omega\\
=&\phi_{\omega}^2-\chi_{\sigma\omega}^2\circ F_\omega+\chi_\omega^2
+2\phi_{\omega}\chi_\omega-2\psi_\omega\chi_{\sigma\omega}\circ F_\omega.
\end{align*}
So
\begin{align*}
\psi_{\sigma^{-k}\omega}^{2}
=\phi_{\sigma^{-k}\omega}^2-\chi_{\sigma^{-k+1}\omega}^2\circ F_{\sigma^{-k}\omega}+\chi_{\sigma^{-k}\omega}^2
+2\phi_{\sigma^{-k}\omega}\chi_{\sigma^{-k}\omega}-2\psi_{\sigma^{-k}\omega}\chi_{\sigma^{-k+1}\omega}\circ F_{\sigma^{-k}\omega}.
\end{align*}
Notice that $P_{\sigma^{-k}\omega}(\psi_{\sigma^{-k}\omega}\chi_{\sigma^{-k+1}\omega}\circ F_{\sigma^{-k}\omega})=0$ and
$\int \psi_{\sigma^{-k}\omega}\chi_{\sigma^{-k+1}\omega}\circ F_{\sigma^{-k}\omega} \rmd\mu_{\sigma^{-k}\omega}=0$, so we have
\begin{align}
&\nonumber\big\|\sum_{k=1}^{n}P_{\sigma^{-k}\omega}^{k}\breve{\phi}_{\sigma^{-k}\omega}\big\|_{L^{q/2}(\Delta,\mu)}\\\label{P1}
\le &\sum_{k=1}^{n}\Big\|P_{\sigma^{-k}\omega}^{k}\big(\phi_{\sigma^{-k}\omega}^2-\int \phi_{\sigma^{-k}\omega}^2 \rmd\mu_{\sigma^{-k}\omega}\big)\Big\|_{L^{q/2}(\Delta,\mu)}\\\label{P2}
&\quad +2\sum_{k=1}^{n}\Big\|P_{\sigma^{-k}\omega}^{k}\big(\phi_{\sigma^{-k}\omega}\chi_{\sigma^{-k}\omega}-\int \phi_{\sigma^{-k}\omega}\chi_{\sigma^{-k}\omega} \rmd\mu_{\sigma^{-k}\omega}\big)\Big\|_{L^{q/2}(\Delta,\mu)}\\\label{P3}
&\quad\quad +\Big\|P_{\sigma^{-n}\omega}^{n}\big(\chi_{\sigma^{-n}\omega}^2-\int \chi_{\sigma^{-n}\omega}^2 \rmd\mu_{\sigma^{-n}\omega}\big)-\big(\chi_{\omega}^2-\int \chi_{\omega}^2\rmd\mu_{\omega}\big)\Big\|_{L^{q/2}(\Delta,\mu)}.
\end{align}

We now aim to estimate \eqref{P1}--\eqref{P3}. To eatimate \eqref{P1}, we first note that
$\sigma:\Omega\to\Omega$ is a stationary process, so it suffices to consider
\[
\sum_{k=1}^{n}\Big\|P_{\omega}^{k}\big(\phi_{\omega}^2-\int \phi_{\omega}^2 \rmd\mu_{\omega}\big)\Big\|_{L^{q/2}(\Delta,\mu)}.
\]
Since $\phi_{(\cdot)}^2-\E_{\mu_{(\cdot)}} \phi_{(\cdot)}^2 \in \mathcal F_{\beta}^{K}$, it follows from Proposition~\ref{ann-decay} that
\begin{align*}
&\Big(\E\int\big|P_{\omega}^{k}(\phi_\omega^2-\int\phi_\omega^2 \rmd\mu_{\omega})\big|^q\rmd\mu_{\sigma^k \omega}\Big)^{1/q}\\
&\le (2M_\phi^2)^{\frac{q-1}{q}}\Big(\E\int\big|P_{\omega}^{k}(\phi_\omega^2-\int\phi_\omega^2 \rmd\mu_{\omega})\big|\rmd\mu_{\sigma^k \omega}\Big)^{1/q}\\
&\ll k^{-\frac{a-1-\delta}{q}}=k^{-(1+\delta)}.
\end{align*}
Hence
\[
\sum_{k=1}^{n}\Big\|P_{\omega}^{k}\big(\phi_{\omega}^2-\int \phi_{\omega}^2 d\mu_{\sigma\omega}\big)\Big\|_{L^{q/2}(\Delta,\mu)}\ll \sum_{k=1}^{n}k^{-(1+\delta)}=O(1)
\hbox{~as~} n\to \infty.
\]

As for \eqref{P2}, it suffices to consider
\[
\sum_{k=1}^{n}\Big\|P_{\omega}^{k}\big(\phi_{\omega}\chi_{\omega}-\int \phi_{\omega}\chi_{\omega} \rmd\mu_{\omega}\big)\Big\|_{L^{q/2}(\Delta,\mu)}.
\]
We recall that $\chi_\omega=\sum_{i\ge 1}P_{\sigma^{-i}\omega}^{i}(\phi_{\sigma^{-i}\omega})$. Then we have
\begin{align*}
&\Big\|P_{\omega}^{k}\big[\phi_{\omega}\chi_\omega-\int \phi_{\omega}\chi_\omega \rmd\mu_\omega\big]\Big\|_{L^{q/2}(\Delta,\mu)}\\
\le &\sum_{i\ge 1}\Big\|P_{\omega}^{k}\big[\phi_{\omega} P_{\sigma^{-i}\omega}^{i}(\phi_{\sigma^{-i}\omega})-\int \phi_{\omega}P_{\sigma^{-i}\omega}^{i}(\phi_{\sigma^{-i}\omega}) \rmd\mu_\omega\big]\Big\|_{L^{q/2}(\Delta,\mu)}\\
\le &\sum_{i\le k}\Big\|P_{\omega}^{k}\big[\phi_{\omega} P_{\sigma^{-i}\omega}^{i}(\phi_{\sigma^{-i}\omega})-\int \phi_{\omega} P_{\sigma^{-i}\omega}^{i}(\phi_{\sigma^{-i}\omega}) \rmd\mu_{\omega}\big]\Big\|_{L^{q/2}(\Delta,\mu)}\\
&\quad\quad +\sum_{i>k}\Big\|P_{\omega}^{k}\big[\phi_{\omega} P_{\sigma^{-i}\omega}^{i}(\phi_{\sigma^{-i}\omega})-\int \phi_{\omega} P_{\sigma^{-i}\omega}^{i}(\phi_{\sigma^{-i}\omega}) \rmd\mu_{\omega}\big]\Big\|_{L^{q/2}(\Delta,\mu)}\\
=:&Y_1+Y_2.
\end{align*}
We note that $\phi_{\omega} P_{\sigma^{-i}\omega}^{i}(\phi_{\sigma^{-i}\omega})\in\mathcal F_{\beta}^{K_{\omega}+K_{\sigma^{-i}\omega}+C_{h,F}}$ with a Lipschitz constant $C_\phi M_\phi$. Indeed,
$\big\|\phi_{\omega} P_{\sigma^{-i}\omega}^{i}(\phi_{\sigma^{-i}\omega})\big\|_{L^\infty(\mu_{\omega})}\le M_\phi^2$ and by Proposition~\ref{reg}, for any $x,y\in \Delta_{\omega}$,
\begin{align*}
&|\phi_{\omega}(x) P_{\sigma^{-i}\omega}^{i}(\phi_{\sigma^{-i}\omega})(x)-\phi_{\omega}(y) P_{\sigma^{-i}\omega}^{i}(\phi_{\sigma^{-i}\omega})(y)|\\
\le &|\phi_{\omega}(x)-\phi_{\omega}(y)|M_\phi+|P_{\sigma^{-i}\omega}^{i}(\phi_{\sigma^{-i}\omega})(x)
-P_{\sigma^{-i}\omega}^{i}(\phi_{\sigma^{-i}\omega})(y)|M_\phi\\
\le &K_{\omega}\beta^{s_{\omega}(x,y)}C_\phi M_\phi
+(K_{\sigma^{-i}\omega}+C_{h,F})\beta^{s_{\omega}(x,y)}C_\phi M_\phi.
\end{align*}
Now, by Proposition~\ref{ann-decay}, we can estimate $Y_1$ that
\begin{align*}
&\sum_{i\le k}\Big\|P_{\omega}^{k}\big[\phi_{\omega} P_{\sigma^{-i}\omega}^{i}(\phi_{\sigma^{-i}\omega})-\int \phi_{\omega} P_{\sigma^{-i}\omega}^{i}(\phi_{\sigma^{-i}\omega}) \rmd\mu_{\omega}\big]\Big\|_{L^{q/2}(\Delta,\mu)}\\
\le &C_{\phi,q}\sum_{i\le k} \Big(\E\int\Big|P_{\omega}^{k}\big[\phi_{\omega} P_{\sigma^{-i}\omega}^{i}(\phi_{\sigma^{-i}\omega})-\int \phi_{\omega} P_{\sigma^{-i}\omega}^{i}(\phi_{\sigma^{-i}\omega}) \rmd\mu_{\omega}\big]\Big|\rmd\mu_{\sigma^k \omega}\Big)^{2/q}\\
\le& C_{\phi,q}\sum_{i\le k}k^{-\frac{2(a-1-\delta)}{q}}\ll k^{-(1+2\delta)}.
\end{align*}

To estimate $Y_2$, by the duality, we have
\begin{align*}
&\int\Big|P_{\omega}^{k}\big[\phi_{\omega} P_{\sigma^{-i}\omega}^{i}(\phi_{\sigma^{-i}\omega})\big]\Big|\rmd\mu_{\sigma^k \omega}\\
=&\sup_{\xi:\|\xi\|_\infty\le 1}\int\xi P_{\omega}^{k}\big[\phi_{\omega} P_{\sigma^{-i}\omega}^{i}(\phi_{\sigma^{-i}\omega})\big]\rmd\mu_{\sigma^k \omega}\\
=&\sup_{\xi:\|\xi\|_\infty\le 1}\int\xi\circ F_{\omega}^{k}\cdot\phi_{\omega} P_{\sigma^{-i}\omega}^{i}(\phi_{\sigma^{-i}\omega})\rmd\mu_{\omega}\\
\ll & \int\big|P_{\sigma^{-i}\omega}^{i}(\phi_{\sigma^{-i}\omega})\big|\rmd\mu_{\omega}.
\end{align*}
Then
\begin{align*}
Y_2&\ll  \sum_{i>k} \Big(\E\int\big| P_{\sigma^{-i}\omega}^{i}(\phi_{\sigma^{-i}\omega})\big|\rmd\mu_{ \omega}\Big)^{2/q}\\
&\ll \sum_{i>k} i^{-\frac{2(a-1-\delta)}{q}}\ll k^{-(1+2\delta)}.
\end{align*}
Based on the estimates on $Y_1$ and $Y_2$, we have
\[
\Big\|P_{\omega}^{k}\big[\phi_{\omega}\chi_\omega-\int \phi_{\omega}\chi_\omega \rmd\mu_\omega\big]\Big\|_{L^{q/2}(\Delta,\mu)}\ll k^{-(1+2\delta)}.
\]
Hence $\eqref{P2}\ll \sum_{k=1}^{n}k^{-(1+2\delta)}=O(1)$ as $n\to \infty$.

As for \eqref{P3}, by \eqref{con} and the fact that $\chi\in L^q(\Delta,\mu)$, we have
\begin{align*}
\eqref{P3}&\le \Big\|P_{\sigma^{-n}\omega}^{n}\big(\chi_{\sigma^{-n}\omega}^2-\int \chi_{\sigma^{-n}\omega}^2 \rmd\mu_{\sigma^{-n}\omega}\big)\Big\|_{L^{q/2}(\Delta,\mu)}+\Big\|\chi_{\omega}^2-\int \chi_{\omega}^2\rmd\mu_{\omega}\Big\|_{L^{q/2}(\Delta,\mu)}\\
&\le 4\|\chi\|_{L^q(\Delta,\mu)}.
\end{align*}
Based on the above estimates, the result follows.
\end{proof}

%
%
%

\section{Auxiliary lemmas}
\begin{lem}\label{est}
Suppose that $\phi\in L^p(\Delta, \mu)$ for $p\ge 2$. Then for a.e. $\omega\in \Omega$,
\[
\int \big|\phi_{\sigma^n \omega}\circ F_{\omega}^{n}\big|^p\rmd\mu_\omega=o(n), \int \big|\phi_{\sigma^n \omega}\circ F_{\omega}^{n}\big|^2\rmd\mu_\omega=O_{\omega,p}(n^{2/p}).
\]
\[
\phi_{\sigma^n \omega}\circ F_{\omega}^{n}(x)=O_{\omega,p}(n^{2/p}(\log n)^{1/p}(\log\log n)^{2/p}) \quad \text{a.s.} \ x\in \Delta_\omega.
\]
\end{lem}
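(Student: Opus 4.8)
The plan is to reduce all three assertions to a single statement about the scalar sequence $g(\sigma^n\omega)$, where I set $g(\omega):=\int_{\Delta_\omega}|\phi_\omega|^p\,\rmd\mu_\omega$. By the equivariance $(F_\omega^n)_*\mu_\omega=\mu_{\sigma^n\omega}$ from Proposition~\ref{suph}, for any nonnegative measurable $\Psi$ on $\Delta_{\sigma^n\omega}$ one has $\int\Psi\circ F_\omega^n\,\rmd\mu_\omega=\int\Psi\,\rmd\mu_{\sigma^n\omega}$; in particular
\[
\int\big|\phi_{\sigma^n\omega}\circ F_\omega^n\big|^p\,\rmd\mu_\omega=g(\sigma^n\omega),\qquad
\int\big|\phi_{\sigma^n\omega}\circ F_\omega^n\big|^2\,\rmd\mu_\omega=\int|\phi_{\sigma^n\omega}|^2\,\rmd\mu_{\sigma^n\omega}.
\]
Moreover $g\in L^1(\Omega,\P)$ with $\E g=\|\phi\|_{L^p(\Delta,\mu)}^p<\infty$, since $\int_\Omega g\,\rmd\P=\int_\Delta|\phi|^p\,\rmd\mu$ by the disintegration $\mu=\int_\Omega\mu_\omega\,\rmd\P$.

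For the first estimate, since $\sigma$ is ergodic and $g\in L^1(\P)$, Birkhoff's ergodic theorem gives $\frac1n\sum_{i=0}^{n-1}g(\sigma^i\omega)\to\E g$ for a.e.\ $\omega$. Writing $A_n:=\sum_{i=0}^{n-1}g(\sigma^i\omega)$, we have $g(\sigma^n\omega)=A_{n+1}-A_n$ and $\frac{A_{n+1}}{n}=\frac{n+1}{n}\cdot\frac{A_{n+1}}{n+1}\to\E g$, whence $\frac1n g(\sigma^n\omega)=\frac{A_{n+1}}{n}-\frac{A_n}{n}\to0$ for a.e.\ $\omega$; this is exactly $\int|\phi_{\sigma^n\omega}\circ F_\omega^n|^p\,\rmd\mu_\omega=o(n)$. (Alternatively one may invoke the first Borel--Cantelli lemma together with $\sum_n\P\{g>n\eps\}\le\eps^{-1}\E g<\infty$ for each $\eps>0$ and let $\eps\downarrow0$ along a sequence.) In particular, for a.e.\ $\omega$ there is a constant $C_\omega>0$ with $g(\sigma^n\omega)\le C_\omega n$ for all $n\ge1$.

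The second estimate follows at once from Jensen's inequality: $\mu_{\sigma^n\omega}$ is a probability measure and $p/2\ge1$, so $\int|\phi_{\sigma^n\omega}|^2\,\rmd\mu_{\sigma^n\omega}\le g(\sigma^n\omega)^{2/p}\le(C_\omega n)^{2/p}=O_{\omega,p}(n^{2/p})$, and we transfer back by the equivariance identity above. For the pointwise bound, set $a_n:=n^{2/p}(\log n)^{1/p}(\log\log n)^{2/p}$, so that $a_n^{p}=n^{2}(\log n)(\log\log n)^{2}$. Markov's inequality and equivariance give
\[
\mu_\omega\big\{x\in\Delta_\omega:\big|\phi_{\sigma^n\omega}\circ F_\omega^n(x)\big|>a_n\big\}=\mu_{\sigma^n\omega}\big\{|\phi_{\sigma^n\omega}|>a_n\big\}\le a_n^{-p}\,g(\sigma^n\omega)\le\frac{C_\omega}{n(\log n)(\log\log n)^{2}},
\]
and $\sum_{n\ge3}\frac1{n(\log n)(\log\log n)^{2}}<\infty$ by the integral test (substitute $u=\log\log n$). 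Hence the first Borel--Cantelli lemma yields $\limsup_{n\to\infty}|\phi_{\sigma^n\omega}\circ F_\omega^n(x)|/a_n\le1$ for $\mu_\omega$-a.e.\ $x$, which is the asserted bound (the implicit constant being allowed to depend on $x$ as well).

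The argument is essentially routine, and I do not anticipate a genuine obstacle; the only points that need a little attention are the passage through the equivariance relation $(F_\omega^n)_*\mu_\omega=\mu_{\sigma^n\omega}$ (so that the mass of $\phi_{\sigma^n\omega}$ is governed by $\mu_{\sigma^n\omega}$ rather than by $\mu_\omega$), the fact that $g/n\to0$ rather than merely $g/n$ bounded (used for the $o(n)$ conclusion), and choosing the sequence $a_n$ precisely so that $\sum_n a_n^{-p}g(\sigma^n\omega)$ is summable while keeping $a_n$ as small as possible.
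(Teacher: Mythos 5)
Your proposal is correct and follows essentially the same route as the paper: Birkhoff's ergodic theorem applied to the Cesàro sums of $\int|\phi_{\sigma^k\omega}\circ F_\omega^k|^p\,\rmd\mu_\omega$ gives the $o(n)$ bound, Jensen/H\"older gives the $O_{\omega,p}(n^{2/p})$ bound, and Chebyshev plus Borel--Cantelli with the same normalizing sequence $n^{2/p}(\log n)^{1/p}(\log\log n)^{2/p}$ gives the pointwise estimate. The only cosmetic difference is that you make the equivariance identity $(F_\omega^n)_*\mu_\omega=\mu_{\sigma^n\omega}$ and the auxiliary function $g$ explicit, which the paper leaves implicit.
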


\begin{proof}
By Birkhoff's ergodic theory, for a.e. $\omega\in \Omega$,
\[
\lim_{n\to\infty}\frac{1}{n}\sum_{k=0}^{n-1}\int \big|\phi_{\sigma^k \omega}\circ F_{\omega}^{k}\big|^p\rmd\mu_\omega
=\E\int|\phi_\omega|^p\rmd\mu_\omega<\infty.
\]
So $\int |\phi_{\sigma^n \omega}\circ F_{\omega}^{n}|^p\rmd\mu_\omega=o(n)$ and
\[
\int \big|\phi_{\sigma^n \omega}\circ F_{\omega}^{n}\big|^2\rmd\mu_\omega\le \big(\int \big|\phi_{\sigma^n \omega}\circ F_{\omega}^{n}\big|^p\rmd\mu_\omega\big)^{2/p}=O_{\omega,p}(n^{2/p}).
\]

Since
\begin{align*}
&\int \Big|\frac{\phi_{\sigma^n \omega}\circ F_{\omega}^{n}}{n^{2/p}(\log n)^{1/p}(\log\log n)^{2/p}}\Big|^p\rmd\mu_\omega\\
=&o(\frac{n}{n^2\log n(\log\log n)^2})=o(\frac{1}{n\log n(\log\log n)^2}),
\end{align*}
by the Borel-Cantelli lemma, we have $\phi_{\sigma^n \omega}\circ F_{\omega}^{n}(x)=O_{\omega,p}(n^{2/p}(\log n)^{1/p}(\log\log n)^{2/p})$ a.s.$ \ x\in \Delta_\omega$.
\end{proof}

\begin{lem}\label{me}
Suppose that $\phi\in L^p(\Delta, \mu)$ for $p\ge 2$. Then for a.e. $\omega\in \Omega$,
\[
\Big\|\max_{0\le k\le n-1}|\phi_{\sigma^k \omega}\circ F_{\omega}^{k}|\Big\|_{L^{p}(\mu_\omega)}=O_{\omega,p}(n^{1/p}).
\]
\end{lem}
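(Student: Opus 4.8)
The plan is to control the maximum by a sum of $p$-th powers and then reduce everything to the Birkhoff ergodic theorem on the base, in the same spirit as the proof of Lemma~\ref{est}. Since all terms are nonnegative, for every $n$ and a.e.\ $\omega$ one has the pointwise inequality
\[
\Big(\max_{0\le k\le n-1}|\phi_{\sigma^k \omega}\circ F_{\omega}^{k}|\Big)^p=\max_{0\le k\le n-1}|\phi_{\sigma^k \omega}\circ F_{\omega}^{k}|^p\le \sum_{k=0}^{n-1}|\phi_{\sigma^k \omega}\circ F_{\omega}^{k}|^p,
\]
so integrating with respect to $\mu_\omega$ gives
\[
\Big\|\max_{0\le k\le n-1}|\phi_{\sigma^k \omega}\circ F_{\omega}^{k}|\Big\|_{L^{p}(\mu_\omega)}^p\le \sum_{k=0}^{n-1}\int\big|\phi_{\sigma^k \omega}\circ F_{\omega}^{k}\big|^p\rmd\mu_\omega.
\]

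Next I would rewrite each summand via equivariance. By Proposition~\ref{suph}, $(F_\omega^k)_*\mu_\omega=\mu_{\sigma^k\omega}$, hence $\int|\phi_{\sigma^k\omega}\circ F_\omega^k|^p\rmd\mu_\omega=\int|\phi_{\sigma^k\omega}|^p\rmd\mu_{\sigma^k\omega}=g(\sigma^k\omega)$, where $g(\omega):=\int|\phi_\omega|^p\rmd\mu_\omega$. Since $\phi\in L^p(\Delta,\mu)$ we have $\E g=\int_\Delta|\phi|^p\rmd\mu<\infty$, so $g\in L^1(\Omega,\P)$. As $\sigma$ is ergodic and measure-preserving, Birkhoff's ergodic theorem yields $\frac1n\sum_{k=0}^{n-1}g(\sigma^k\omega)\to\E g$ for a.e.\ $\omega$; in particular, for a.e.\ $\omega$ the sequence of Ces\`aro averages is bounded, say by a constant $C_{\omega,p}$. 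Plugging this into the previous display gives $\big\|\max_{0\le k\le n-1}|\phi_{\sigma^k\omega}\circ F_\omega^k|\big\|_{L^p(\mu_\omega)}^p\le C_{\omega,p}\,n$, which is the claimed $O_{\omega,p}(n^{1/p})$ after taking $p$-th roots.

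There is essentially no obstacle here: the crude bound $\max\le\sum$ already produces the exponent $n^{1/p}$, and the only point requiring a word of care is that the implied constant is allowed to depend on $\omega$ (and $p$) — it comes from the boundedness of the Birkhoff averages, which is immediate once they converge. A sharper bound via a genuine maximal inequality is possible but unnecessary, since the rate $n^{1/p}$ is exactly what is used in the sequel.
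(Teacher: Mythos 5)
Your proof is correct and follows essentially the same route as the paper: bound the maximum by the sum of $p$-th powers, identify each summand via equivariance with $g(\sigma^k\omega)$ for $g(\omega)=\int|\phi_\omega|^p\rmd\mu_\omega$, and invoke Birkhoff's ergodic theorem to get the $O_{\omega,p}(n)$ bound before taking $p$-th roots. The only difference is that you make the equivariance step explicit, which the paper leaves implicit.
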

\begin{proof}
By Birkhoff's ergodic theory, for a.e. $\omega\in \Omega$,
\[
\lim_{n\to\infty}\frac{1}{n}\sum_{k=0}^{n-1}\int \big|\phi_{\sigma^k \omega}\circ F_{\omega}^{k}\big|^p\rmd\mu_\omega
=\E\int|\phi_\omega|^p\rmd\mu_\omega<\infty.
\]
So $\sum_{k=0}^{n-1}\int |\phi_{\sigma^k \omega}\circ F_{\omega}^{k}|^p\rmd\mu_\omega=O_{\omega,p}(n)$.
Since
\[
\int \max_{0\le k\le n-1}\big|\phi_{\sigma^k \omega}\circ F_{\omega}^{k}\big|^p\rmd\mu_\omega
\le \sum_{k=0}^{n-1}\int \big|\phi_{\sigma^k \omega}\circ F_{\omega}^{k}\big|^p\rmd\mu_\omega=O_{\omega,p}(n),
\]
we have $\big\|\max_{0\le k\le n-1}|\phi_{\sigma^k \omega}\circ F_{\omega}^{k}|\big\|_{L^{p}(\mu_\omega)}=O_{\omega,p}(n^{1/p})$.
\end{proof}

\begin{lem}\label{mme}
Suppose that $\psi\in L^p(\Delta, \mu)$ for $p\ge 2$. Let $\{\psi_{\sigma^{n-i} \omega}\circ F_{\omega}^{n-i}\}_{1\le i\le n}$ be a sequence of martingale differences w.r.t the filtration
$\{(F_{\omega}^{n-i})^{-1}\cal B_{\sigma^{n-i} \omega}\}_{1\le i\le n}$ for a fixed $n\ge 1$. Then for a.e. $\omega\in \Omega$,
\[
\Big\|\max_{1\le k\le n}\big|\sum_{i=1}^{k}\psi_{\sigma^{n-i} \omega}\circ F_{\omega}^{n-i}\big|\Big\|_{L^{p}(\mu_\omega)}=O_{\omega,p}(n^{1/2}).
\]
\end{lem}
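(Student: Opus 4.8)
The plan is to combine Birkhoff's ergodic theorem (as already exploited in Lemmas~\ref{est} and \ref{me}) with Burkholder's inequality for martingale differences. First I would record the consequence of Birkhoff's theorem that for a.e.\ $\omega\in\Omega$ one has $\sum_{k=0}^{n-1}\int|\psi_{\sigma^k\omega}\circ F_\omega^k|^p\,\rmd\mu_\omega=O_{\omega,p}(n)$; this is exactly the estimate derived in the proof of Lemma~\ref{me}, and since the reindexing $i\mapsto n-i$ is just a permutation of $\{0,\dots,n-1\}$, it also gives $\sum_{i=1}^{n}\int|\psi_{\sigma^{n-i}\omega}\circ F_\omega^{n-i}|^p\,\rmd\mu_\omega=O_{\omega,p}(n)$, with the bounding constant \emph{uniform in $n$} (this uniformity is the key point: the $O_{\omega,p}(n)$ constant does not depend on the fixed $n$ that defines the filtration).

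Next I would invoke the Burkholder--Davis--Gundy / Doob maximal martingale inequality: for a martingale with difference sequence $\{d_i\}_{1\le i\le n}$ and $p\ge 2$,
\[
\Big\|\max_{1\le k\le n}\big|\sum_{i=1}^{k}d_i\big|\Big\|_{L^p(\mu_\omega)}\le C_p\Big\|\big(\sum_{i=1}^{n}d_i^2\big)^{1/2}\Big\|_{L^p(\mu_\omega)}=C_p\Big\|\sum_{i=1}^{n}d_i^2\Big\|_{L^{p/2}(\mu_\omega)}^{1/2},
\]
with $C_p$ depending only on $p$. Here $d_i=\psi_{\sigma^{n-i}\omega}\circ F_\omega^{n-i}$. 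Then I would bound the quadratic-variation term by the triangle inequality in $L^{p/2}$:
\[
\Big\|\sum_{i=1}^{n}d_i^2\Big\|_{L^{p/2}(\mu_\omega)}\le\sum_{i=1}^{n}\big\|d_i^2\big\|_{L^{p/2}(\mu_\omega)}=\sum_{i=1}^{n}\big\|d_i\big\|_{L^{p}(\mu_\omega)}^2.
\]

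To finish, I would apply the power-mean (Jensen) inequality to the last sum: since $t\mapsto t^{p/2}$ is convex for $p\ge2$,
\[
\sum_{i=1}^{n}\big\|d_i\big\|_{L^{p}(\mu_\omega)}^2\le n^{1-2/p}\Big(\sum_{i=1}^{n}\big\|d_i\big\|_{L^{p}(\mu_\omega)}^{p}\Big)^{2/p}=n^{1-2/p}\Big(\sum_{i=1}^{n}\int|d_i|^p\,\rmd\mu_\omega\Big)^{2/p}=n^{1-2/p}\cdot O_{\omega,p}(n)^{2/p}=O_{\omega,p}(n).
\]
Chaining the three bounds gives $\big\|\max_{1\le k\le n}|\sum_{i=1}^{k}d_i|\big\|_{L^p(\mu_\omega)}\le C_p\,O_{\omega,p}(n)^{1/2}=O_{\omega,p}(n^{1/2})$, as claimed. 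The only point requiring care --- and the one I would flag as the main obstacle --- is making sure the constant implicit in $\sum_{i=1}^n\int|d_i|^p\,\rmd\mu_\omega=O_{\omega,p}(n)$ is genuinely independent of $n$: it comes from the Cesàro convergence in Birkhoff's theorem along the full orbit $\{\int|\psi_{\sigma^k\omega}\circ F_\omega^k|^p\rmd\mu_\omega\}_{k\ge0}$, hence from a single almost-sure statement about $\omega$, which is legitimate precisely because the bound $\sup_{m\ge1}\frac1m\sum_{k=0}^{m-1}\int|\psi_{\sigma^k\omega}\circ F_\omega^k|^p\rmd\mu_\omega<\infty$ holds a.e.\ $\omega$ and does not see the auxiliary index $n$.
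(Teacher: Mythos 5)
Your proposal is correct and follows essentially the same route as the paper: Burkholder--Davis--Gundy, then Minkowski's inequality in $L^{p/2}$ on the quadratic variation, then Birkhoff's ergodic theorem to get the $O_{\omega,p}(n)$ bound uniform in $n$. The only (immaterial) difference is that you insert a power-mean step so as to apply Birkhoff to $\omega\mapsto\int|\psi_\omega|^p\,\rmd\mu_\omega$, whereas the paper applies it directly to $\omega\mapsto\|\psi_\omega^2\|_{L^{p/2}(\mu_\omega)}$, which is integrable by Jensen.
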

\begin{proof}
By the Burkholder-Davis-Gundy inequality and Minkowski inequality, there is a constant $C_p$ such that for a.e. $\omega\in \Omega$,
\begin{align*}
&\Big\|\max_{1\le k\le n}\big|\sum_{i=1}^{k}\psi_{\sigma^{n-i} \omega}\circ F_{\omega}^{n-i}\big|\Big\|_{L^{p}(\mu_\omega)}\\
\le & C_p\Big\|\Big(\sum_{i=1}^{n}\psi_{\sigma^{n-i} \omega}^2\circ F_{\omega}^{n-i}\Big)^{1/2}\Big\|_{L^{p}(\mu_\omega)}\\
\le &C_p\Big(\sum_{i=1}^{n}\big\|\psi_{\sigma^{n-i} \omega}^2\circ F_{\omega}^{n-i}\big\|_{L^{p/2}(\mu_\omega)}\Big)^{1/2}\\
=&O_{\omega,p}(n^{1/2}),
\end{align*}
where the last equality is due to $\E\|\psi_\omega^2\|_{L^{p/2}(\mu_\omega)}\le (\E\int|\psi_\omega|^p\rmd\mu_\omega)^{2/p}<\infty$ and Birkhoff's ergodic theorem.
\end{proof}

\begin{lem}\label{mom}
Consider the RYT satisfying (P1)-(P8) with $a>5$. Let $n\ge 1$ and  $\delta>0$ be small s.t. $q=\frac{a-1-\delta}{\delta+1}\ge4$. We suppose that
$\phi\in \mathcal F_{\beta}^{K}$and $\int\phi_\omega \rmd\mu_{\omega}=0$.
Then for a.e. $\omega\in \Omega$,
\[
\Big\|\max_{1\le k\le n}\big|\sum_{i=0}^{k-1}\phi_{\sigma^i \omega}\circ F_{\omega}^{i}\big|\Big\|_{L^{q}(\mu_\omega)}=
O_{\omega,q}(n^{1/2}).
\]
\end{lem}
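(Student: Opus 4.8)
The goal is to control the $L^q(\mu_\omega)$-norm of the maximum of the Birkhoff sums $S_k^\omega = \sum_{i=0}^{k-1}\phi_{\sigma^i\omega}\circ F_\omega^i$. The natural route is to invoke the primary martingale-coboundary decomposition from Lemma~\ref{pmd}, which writes
\[
S_n^\omega=\sum_{i=0}^{n-1}\psi_{\sigma^i\omega}\circ F_\omega^i+\chi_{\sigma^n\omega}\circ F_\omega^n-\chi_\omega,
\]
where $\{\psi_{\sigma^i\omega}\circ F_\omega^i\}_{i\ge0}$ is a reverse martingale difference sequence with respect to $\{(F_\omega^i)^{-1}\mathcal B_{\sigma^i\omega}\}_{i\ge0}$ and both $\psi,\chi\in L^q(\Delta,\mu)$. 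Then I would split the maximum into three pieces using the triangle inequality:
\[
\Big\|\max_{1\le k\le n}|S_k^\omega|\Big\|_{L^q(\mu_\omega)}\le \Big\|\max_{1\le k\le n}\big|\textstyle\sum_{i=0}^{k-1}\psi_{\sigma^i\omega}\circ F_\omega^i\big|\Big\|_{L^q(\mu_\omega)}+\Big\|\max_{1\le k\le n}|\chi_{\sigma^k\omega}\circ F_\omega^k|\Big\|_{L^q(\mu_\omega)}+\|\chi_\omega\|_{L^q(\mu_\omega)}.
\]

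For the martingale term, the subtlety is that $\{\psi_{\sigma^i\omega}\circ F_\omega^i\}$ is a \emph{reverse} martingale difference sequence, not a forward one, so the Burkholder--Davis--Gundy inequality is not directly applicable in the stated index order. The standard fix is to reverse the indices for each fixed $n$: set $g_i := \psi_{\sigma^{n-i}\omega}\circ F_\omega^{n-i}$ for $1\le i\le n$, which is then a genuine (forward) martingale difference sequence with respect to the reversed filtration $\{(F_\omega^{n-i})^{-1}\mathcal B_{\sigma^{n-i}\omega}\}_{1\le i\le n}$ — this is exactly the hypothesis set up in Lemma~\ref{mme}. One has to check that $\max_{1\le k\le n}|\sum_{i=0}^{k-1}\psi_{\sigma^i\omega}\circ F_\omega^i|$ is controlled by $\max_{1\le k\le n}|\sum_{i=1}^k g_i|$ plus possibly $|\sum_{i=1}^n g_i|$ (telescoping the partial sums of the reversed sequence back to partial sums of the original one). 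Applying Lemma~\ref{mme} with $p=q$ and $\psi$ as above gives this piece the bound $O_{\omega,q}(n^{1/2})$.

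For the coboundary terms, the last term $\|\chi_\omega\|_{L^q(\mu_\omega)}$ is finite for a.e.\ $\omega$ by Fubini since $\chi\in L^q(\Delta,\mu)$, hence $O_{\omega,q}(1)$. The middle term is handled by Lemma~\ref{me} applied to $\chi\in L^q(\Delta,\mu)$ with $p=q$, which gives $\|\max_{0\le k\le n}|\chi_{\sigma^k\omega}\circ F_\omega^k|\|_{L^q(\mu_\omega)}=O_{\omega,q}(n^{1/q})$, and since $q\ge4>2$ we have $n^{1/q}=O(n^{1/2})$. Adding the three bounds yields $\|\max_{1\le k\le n}|S_k^\omega|\|_{L^q(\mu_\omega)}=O_{\omega,q}(n^{1/2})$, as claimed. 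The main technical point to be careful about is the forward/reverse martingale bookkeeping in the first term — matching the partial-sum structure of the original decomposition to the reversed-index martingale required by Burkholder--Davis--Gundy — but everything else is a direct application of the auxiliary lemmas already established.
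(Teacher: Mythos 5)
Your proposal is correct and follows essentially the same route as the paper: the paper also applies the primary martingale--coboundary decomposition, handles the reverse martingale by working with the reversed partial sums $\sum_{i=1}^{k}\phi_{\sigma^{n-i}\omega}\circ F_\omega^{n-i}$ (bounded via Lemma~\ref{mme} for the martingale part and Lemma~\ref{me} for the coboundary part), and then recovers the original maxima by writing $\sum_{i=0}^{k-1}\phi_{\sigma^i\omega}\circ F_\omega^i$ as a difference of two reversed partial sums. The only cosmetic difference is that the paper reverses indices before decomposing while you decompose first and then reverse; the telescoping step you flag is exactly the identity the paper uses.
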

\begin{proof}
By Lemma~\ref{pmd}, we can write
\[
\sum_{i=1}^{k}\phi_{\sigma^{n-i} \omega}\circ F_{\omega}^{n-i}=\sum_{i=1}^{k}\psi_{\sigma^{n-i} \omega}\circ F_{\omega}^{n-i}+\chi_{\sigma^n\omega}\circ F_\omega^n-\chi_{\sigma^{n-k}\omega}\circ F_\omega^{n-k}.
\]
Then it follows from Lemmas~\ref{me} and \ref{mme} that
\begin{align*}
&\Big\|\max_{1\le k\le n}\big|\sum_{i=1}^{k}\phi_{\sigma^{n-i} \omega}\circ F_{\omega}^{n-i}\big|\Big\|_{L^{q}(\mu_\omega)}\\
\le &\Big\|\max_{1\le k\le n}\big|\sum_{i=1}^{k}\psi_{\sigma^{n-i} \omega}\circ F_{\omega}^{n-i}\big|\Big\|_{L^{q}(\mu_\omega)}+\Big\|\max_{1\le k\le n}\big|\chi_{\sigma^n\omega}\circ F_\omega^n-\chi_{\sigma^{n-k}\omega}\circ F_\omega^{n-k}\big|\Big\|_{L^{q}(\mu_\omega)}\\
=&O_{\omega,q}(n^{1/2})+O_{\omega,q}(n^{1/q})
= O_{\omega,q}(n^{1/2}).
\end{align*}
Writing $\sum_{i=0}^{k-1}\phi_{\sigma^i \omega}\circ F_{\omega}^{i}=\sum_{i=1}^{n}\phi_{\sigma^{n-i} \omega}\circ F_{\omega}^{n-i}-\sum_{j=1}^{n-k}\phi_{\sigma^{n-j} \omega}\circ F_{\omega}^{n-j}$, we obtain
\[\Big\|\max_{1\le k\leq n}\big|\sum_{i=0}^{k-1}\phi_{\sigma^i \omega}\circ F_{\omega}^{i}\big|\Big\|_{L^{q}(\mu_\omega)}= O_{\omega,q}(n^{1/2}).\]
\end{proof}

\begin{cor}\label{smb}
Consider the RYT satisfying (P1)-(P8) with $a>5$. Let $n\ge 1$ and  $\delta>0$ be small s.t. $q=\frac{a-1-\delta}{\delta+1}\ge4$. We suppose that  $\phi\in \mathcal F_{\beta}^{K}$and $\int\phi_\omega \rmd\mu_{\omega}=0$. Then for a.e. $\omega\in \Omega$,
\[
\Big\|\max_{1\le k\le n}\sum_{i=0}^{k-1}\breve{\phi}_{\sigma^i \omega}\circ F_{\omega}^{i}\Big\|_{L^{q/2}(\mu_\omega)}=
 O_{\omega,q}(n^{1/2}).
\]
\end{cor}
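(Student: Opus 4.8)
The plan is to exploit the secondary martingale--coboundary decomposition $\breve\psi_\omega = \breve\phi_\omega - \breve\chi_{\sigma\omega}\circ F_\omega + \breve\chi_\omega$ with $\breve\psi,\breve\chi\in L^{q/2}(\Delta,\mu)$ and $\breve\psi_\omega\in\ker P_\omega$, established in the previous subsection via Lemma~\ref{smd} and the $O(1)$ bound just proved. Telescoping gives, for each fixed $n$ and $0\le k\le n-1$,
\[
\sum_{i=0}^{k-1}\breve\phi_{\sigma^i\omega}\circ F_\omega^i=\sum_{i=0}^{k-1}\breve\psi_{\sigma^i\omega}\circ F_\omega^i+\breve\chi_{\sigma^k\omega}\circ F_\omega^k-\breve\chi_\omega,
\]
so that, after reindexing as in the proof of Lemma~\ref{mom} (writing $\sum_{i=0}^{k-1}\breve\phi_{\sigma^i\omega}\circ F_\omega^i=\sum_{i=1}^n\breve\phi_{\sigma^{n-i}\omega}\circ F_\omega^{n-i}-\sum_{j=1}^{n-k}\breve\phi_{\sigma^{n-j}\omega}\circ F_\omega^{n-j}$), it suffices to bound the two pieces separately.

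First I would handle the martingale part. Since $\breve\psi_\omega\in\ker P_\omega$, identity~\eqref{cvm} shows that $\{\breve\psi_{\sigma^{n-i}\omega}\circ F_\omega^{n-i}\}_{1\le i\le n}$ is a reverse martingale difference sequence with respect to $\{(F_\omega^{n-i})^{-1}\mathcal B_{\sigma^{n-i}\omega}\}_{1\le i\le n}$, exactly as in Lemma~\ref{pmd}. As $\breve\psi\in L^{q/2}(\Delta,\mu)$ with $q/2\ge2$, Lemma~\ref{mme} (applied with $p=q/2$ and observable $\breve\psi$) yields
\[
\Big\|\max_{1\le k\le n}\big|\sum_{i=1}^k\breve\psi_{\sigma^{n-i}\omega}\circ F_\omega^{n-i}\big|\Big\|_{L^{q/2}(\mu_\omega)}=O_{\omega,q}(n^{1/2}).
\]
For the coboundary part, $\max_{1\le k\le n}|\breve\chi_{\sigma^n\omega}\circ F_\omega^n-\breve\chi_{\sigma^{n-k}\omega}\circ F_\omega^{n-k}|\le 2\max_{0\le k\le n}|\breve\chi_{\sigma^k\omega}\circ F_\omega^k|$, and since $\breve\chi\in L^{q/2}(\Delta,\mu)$, Lemma~\ref{me} (with $p=q/2$) gives a contribution of $O_{\omega,q}(n^{2/q})=o(n^{1/2})$, the last equality because $q\ge4$. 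Combining via the triangle inequality in $L^{q/2}(\mu_\omega)$ bounds $\big\|\max_{1\le k\le n}|\sum_{i=1}^k\breve\phi_{\sigma^{n-i}\omega}\circ F_\omega^{n-i}|\big\|_{L^{q/2}(\mu_\omega)}$ by $O_{\omega,q}(n^{1/2})$, and then the reindexing step transfers this to $\big\|\max_{1\le k\le n}|\sum_{i=0}^{k-1}\breve\phi_{\sigma^i\omega}\circ F_\omega^i|\big\|_{L^{q/2}(\mu_\omega)}=O_{\omega,q}(n^{1/2})$; dropping the absolute value inside the max only decreases the quantity, so the stated bound follows.

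There is essentially no genuine obstacle here, since this is the exact analogue of Lemma~\ref{mom} with $(\phi,\psi,\chi,q)$ replaced by $(\breve\phi,\breve\psi,\breve\chi,q/2)$; the only point requiring a little care is checking that the integrability exponent $q/2$ is still at least $2$ (so that Burkholder--Davis--Gundy and the Minkowski-type steps in Lemmas~\ref{me}, \ref{mme} apply), which holds precisely because the standing hypothesis forces $q\ge4$. One should also be slightly careful that the martingale property of $\{\breve\psi_{\sigma^{n-i}\omega}\circ F_\omega^{n-i}\}$ is with respect to the reversed filtration for each fixed $n$, which is exactly the form Lemma~\ref{mme} is stated in, so no adjustment is needed.
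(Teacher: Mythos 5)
Your proposal is correct and is exactly the argument the paper intends: the paper's proof of this corollary is a one-line reference to "the martingale-coboundary decomposition of $\breve\phi$ and the argument for Lemma~\ref{mom}", and you have simply written that argument out in full (secondary decomposition, Lemma~\ref{mme} with $p=q/2\ge 2$ for the martingale part, Lemma~\ref{me} for the coboundary part, then the same reindexing as in Lemma~\ref{mom}). No discrepancies to report.
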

\begin{proof}
This follows from the martingale-coboundary decomposition of $\breve \phi$ and the argument for Lemma~\ref{mom}.
\end{proof}

\begin{cor}(QASIP)
Consider the RYT satisfying (P1)-(P8) with $a>9$. Let $n\ge 1$ and  $\delta>0$ be small s.t. $q=\frac{a-1-\delta}{\delta+1}>8$. We suppose that  $\phi\in \mathcal F_{\beta}^{K}$and $\int\phi_\omega \rmd\mu_{\omega}=0$. Then for a.e. $\omega\in \Omega$, there exists a probability space supporting a sequence $\{Z_n\}$ of independent centered Gaussian random variables such that
\[
\sup_{1\le k\le n}\big|\sum_{i=1}^{k}\phi_{\sigma^i\omega}\circ F_\omega^i-\sum_{i=1}^{k}Z_i\big|=O(n^{1/4}(\log n)^{1/2}(\log\log n)^{1/4}) \quad
\text{a.s.~} x\in \Delta_\omega.
\]
\end{cor}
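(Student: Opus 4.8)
The plan is to establish the quenched almost sure invariance principle by combining the primary martingale--coboundary decomposition of Lemma~\ref{pmd} with the martingale version of the Skorokhod embedding theorem, while using the secondary decomposition of $\breve\phi$ to control the quadratic variation. First I would write $\sum_{i=1}^{k}\phi_{\sigma^i\omega}\circ F_\omega^i=\sum_{i=1}^{k}\psi_{\sigma^i\omega}\circ F_\omega^i+\chi_{\sigma^{k+1}\omega}\circ F_\omega^{k+1}-\chi_{\sigma\omega}\circ F_\omega$, so that the coboundary terms are controlled: by Lemma~\ref{est} applied with $p=q>8$, $\chi_{\sigma^n\omega}\circ F_\omega^n(x)=O_{\omega,q}(n^{2/q}(\log n)^{1/q}(\log\log n)^{2/q})=o(n^{1/4})$ almost surely, which is negligible compared to the target rate $n^{1/4}(\log n)^{1/2}(\log\log n)^{1/4}$. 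Hence it suffices to prove the QASIP for the reverse martingale $M_k^\omega:=\sum_{i=1}^{k}\psi_{\sigma^i\omega}\circ F_\omega^i$, reindexed (for each fixed horizon $n$) as a genuine martingale $\sum_{i=1}^{k}\psi_{\sigma^{n-i}\omega}\circ F_\omega^{n-i}$ with respect to $\{(F_\omega^{n-i})^{-1}\mathcal B_{\sigma^{n-i}\omega}\}$, as in Lemma~\ref{mme}.

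Next I would invoke the quantitative martingale Skorokhod embedding (this is the content of Lemma~\ref{xnw} referenced in Remark~\ref{wass}(3), in the spirit of Strassen and Heyde--Brown): there is a probability space carrying a Brownian motion and stopping times $0=\tau_0\le\tau_1\le\cdots$ such that $M_k^\omega$ has the same law as $B(\tau_k)$, with $\tau_k-\tau_{k-1}=\E[(\psi_{\sigma^k\omega}\circ F_\omega^k)^2\mid\mathcal G_{k-1}]$ in conditional mean; setting $Z_i:=B(\tau_i)-B(\tau_{i-1})$ gives the independent centered Gaussians (after the usual passage to a product extension making the increments genuinely independent, or one keeps the Gaussians as the increments of $B$ over a deterministic grid). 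The error in the embedding is governed by $\max_{k\le n}|\tau_k-k\Sigma^2|$ up to logarithmic factors and the increments $\max_{k\le n}(\psi_{\sigma^k\omega}\circ F_\omega^k)^2$. The latter is $O_{\omega,q}(n^{4/q}\cdot\text{(logs)})=o(n^{1/2})$ by Lemma~\ref{est}.

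The heart of the argument, and the main obstacle, is controlling $\max_{k\le n}\bigl|\sum_{i=1}^{k}\bigl(\E_{\mu_\omega}[(\psi_{\sigma^i\omega}\circ F_\omega^i)^2\mid\mathcal G_{i-1}]-\Sigma^2\bigr)\bigr|$, i.e. the fluctuation of the quadratic variation around its linear growth. By \eqref{cvm} the conditional expectation equals $[P_{\sigma^i\omega}(\psi_{\sigma^i\omega}^2)]\circ F_\omega^{i+1}$, and by the very definition $\breve\phi_\omega=[P_\omega(\psi_\omega^2-\int\psi_\omega^2\,\rmd\mu_\omega)]\circ F_\omega$, this fluctuation is a Birkhoff sum of $\breve\phi$ plus a Birkhoff sum of the deterministic sequence $\int\psi_{\sigma^i\omega}^2\,\rmd\mu_{\sigma^i\omega}$; the latter is handled by Birkhoff's theorem for $\sigma$ (and defines $\Sigma^2$), while the former is exactly $\sum_{i=0}^{k-1}\breve\phi_{\sigma^i\omega}\circ F_\omega^i$, for which Corollary~\ref{smb} gives the $L^{q/2}$ maximal bound $O_{\omega,q}(n^{1/2})$. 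Upgrading this $L^{q/2}$ bound to an almost sure bound $o(n^{1/2})$ — or more precisely to $o(n^{1/2-\epsilon})$ with a small power saving, which is what feeds the $n^{1/4}$ rate after taking square roots in the embedding estimate — requires a Borel--Cantelli argument along a subsequence $n_j$ combined with the maximal inequality to fill the gaps; here one uses $q/2>4$ so that $\sum_j n_j^{-(q/2)\epsilon'}<\infty$ for suitable subsequence spacing. Assembling: the Skorokhod embedding error is then $O\bigl((\max_k|\tau_k-k\Sigma^2|)^{1/2}(\log n)^{1/2}(\log\log n)^{1/4}\bigr)+o(n^{1/4})=O(n^{1/4}(\log n)^{1/2}(\log\log n)^{1/4})$ almost surely, which is the claimed bound; replacing the Gaussians $B(\tau_i)-B(\tau_{i-1})$ by independent $Z_i\sim N(0,\Sigma^2)$ costs only a further term of the same order via Lévy's modulus of continuity for $B$.
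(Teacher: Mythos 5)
Your overall strategy (primary decomposition from Lemma~\ref{pmd}, disposal of the coboundary via Lemma~\ref{est}, Skorokhod embedding for the reverse martingale, control of the quadratic variation through $\breve\phi$) matches the paper's, which implements the embedding by citing Cuny--Merlev\`ede rather than redoing it by hand. However, there is a genuine gap at the step you yourself identify as the heart of the argument: the passage from the $L^{q/2}$ maximal bound of Corollary~\ref{smb} to an almost sure bound on $\max_{k\le n}\big|\sum_{i=0}^{k-1}\breve\phi_{\sigma^i\omega}\circ F_\omega^i\big|$. You claim that Borel--Cantelli along a subsequence upgrades $\big\|\max_{k\le n}|\cdot|\big\|_{L^{q/2}(\mu_\omega)}=O(n^{1/2})$ to an a.s.\ bound $o(n^{1/2-\epsilon})$. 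This is impossible: Chebyshev gives
\[
\mu_\omega\Big(\max_{k\le n}\big|\textstyle\sum_{i=0}^{k-1}\breve\phi_{\sigma^i\omega}\circ F_\omega^i\big|>n^{1/2-\epsilon}\Big)\le C\,n^{q/4}\,n^{-(1/2-\epsilon)q/2}=C\,n^{\epsilon q/2},
\]
which diverges, so the events are not even asymptotically negligible, let alone summable. The best a moment bound of order $n^{1/2}$ can yield by blocking is $O(n^{1/2+\epsilon})$ a.s.\ for every $\epsilon>0$, and feeding that into the embedding produces only $O(n^{1/4+\epsilon})$, strictly weaker than the stated rate $O(n^{1/4}(\log n)^{1/2}(\log\log n)^{1/4})$.

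The missing idea is to use the secondary martingale--coboundary decomposition of Lemma~\ref{smd} at the level of almost sure asymptotics rather than only through its $L^{q/2}$ consequence. Writing $\breve\phi_\omega=\breve\psi_\omega+\breve\chi_{\sigma\omega}\circ F_\omega-\breve\chi_\omega$ with $\breve\psi_\omega\in\ker P_\omega$ and $\breve\psi,\breve\chi\in L^{q/2}$, the sum $\sum_{i=0}^{n-1}\breve\phi_{\sigma^i\omega}\circ F_\omega^i$ equals a reverse martingale in $\breve\psi$ plus a coboundary that Lemma~\ref{est} controls by $O(n^{4/q}(\log n)^{2/q}(\log\log n)^{4/q})$ a.s. The law of the iterated logarithm for the $L^2$ reverse martingale differences $\breve\psi_{\sigma^i\omega}\circ F_\omega^i$ (obtained in the paper from the ASIP with rate $o((n\log\log n)^{1/2})$ in Cuny--Merlev\`ede) then gives the genuinely almost sure bound $O((n\log\log n)^{1/2})$ on the quadratic-variation fluctuation, which is exactly what the embedding needs to deliver $O(n^{1/4}(\log n)^{1/2}(\log\log n)^{1/4})$. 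You mention the secondary decomposition in your opening sentence but never deploy it this way; repairing the proof requires replacing your Borel--Cantelli upgrade by this LIL argument.
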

\begin{proof}
By the martingale approximation, for a.e. $\omega\in \Omega$,
\begin{align*}
\sum_{i=0}^{n-1}\phi_{\sigma^i \omega}\circ F_\omega^i=\sum_{i=0}^{n-1}\psi_{\sigma^{i} \omega}\circ F_\omega^{i}+
\chi_{\sigma^n\omega}\circ F_\omega^n-\chi_\omega,
\end{align*}
where $\chi,\psi\in L^q(\Delta,\mu)$, $q> 8$ and $\{\psi_{\sigma^{i} \omega}\circ F_\omega^{i}\}_{i\ge 0}$ is a reverse martingale difference sequence w.r.t. $\{(F_\omega^{i})^{-1}\mathcal B_{\sigma^{i} \omega}\}_{i\ge 0}$.

Since $\chi \in L^q(\Delta,\mu)$, by Lemma~\ref{est},
\[
\max_{0\le k\le n-1}|\chi_{\sigma^k\omega}\circ F_\omega^k|=O(n^{2/q}(\log n)^{1/q}(\log\log n)^{2/q}) \quad \text{a.s.} \ x\in \Delta_\omega.
\]
It remains to prove the QASIP for the sequence $\{\sum_{i=0}^{n-1}\psi_{\sigma^{i} \omega}\circ F_\omega^{i}\}$.

Recall that
\[
\breve{\phi}_\omega=\big[P_\omega(\psi_{\omega}^{2}-\int\psi_{\omega}^{2}\rmd\mu_\omega)\big]\circ F_\omega,\quad
\breve{\phi}(\omega, \cdot)=\breve{\phi}_\omega(\cdot).
\]
By the secondary martingale-coboundary decomposition, for a.e. $\omega\in \Omega$,
\[\breve{\phi}_{\omega}=\breve{\psi}_\omega+\breve{\chi}_{\sigma\omega}\circ F_\omega-\breve{\chi}_\omega\]
with $\breve{\psi}$, $\breve{\chi}\in L^{q/2}(\Delta, \mu)$.
 Hence,
\begin{align*}
&\sum_{i=0}^{n-1}\mathbb E_{\mu_\omega}(\psi_{\sigma^{i} \omega}^2\circ F_\omega^{i}|(\cal F_{\omega}^{i+1})^{-1}\cal B_{\sigma^{i+1} \omega})-\sum_{i=0}^{n-1}\mathbb E_{\mu_\omega}(\psi_{\sigma^{i} \omega}^2\circ F_\omega^{i})\\
=&\sum_{i=0}^{n-1}\Big[P_{\sigma^{i}\omega}\big(\psi_{\sigma^{i}\omega}^{2}-\int\psi_{\sigma^{i}\omega}^{2}
\rmd\mu_{\sigma^{i}\omega}\big)\Big]\circ F_{\omega}^{i+1}\\
=&\sum_{i=0}^{n-1}\breve{\phi}_{\sigma^{i}\omega}\circ F_{\omega}^{i}
=\sum_{i=0}^{n-1}\breve{\psi}_{\sigma^{i}\omega}\circ F_{\omega}^{i}+\breve{\chi}_{\sigma^n\omega}\circ F_{\sigma^n\omega}-\breve{\chi}_\omega.
\end{align*}
Since $\{\breve{\psi}_{\sigma^{i}\omega}\circ F_{\omega}^{i}\}$ is a sequence of $L^2$ reverse martingale differences,
by~\cite[Corollary~2.5]{CM15}, the QASIP holds with error rate $o((n\log\log n)^{1/2})$. This implies that the law of the iterated logarithm
holds, i.e. $\sum_{i=0}^{n-1}\breve{\psi}_{\sigma^{i}\omega}\circ F_{\omega}^{i}=O((n\log\log n)^{1/2})$ a.s.
Also, $\breve{\chi}_{\sigma^n\omega}\circ F_{\sigma^n\omega}-\breve{\chi}_\omega=O(n^{4/q}(\log n)^{2/q}(\log\log n)^{4/q})$ a.s.$\ x\in \Delta_\omega$.
Hence
\begin{align*}
&\sum_{i=0}^{n-1}\mathbb E_{\mu_\omega}(\psi_{\sigma^{i} \omega}^2\circ F_\omega^{i}|(\cal F_{\omega}^{i+1})^{-1}\cal B_{\sigma^{i+1} \omega})-\sum_{i=0}^{n-1}\mathbb E_{\mu_\omega}(\psi_{\sigma^{i} \omega}^2\circ F_\omega^{i})\\
&=O((n\log\log n)^{1/2})+O(n^{4/q}(\log n)^{2/q}(\log\log n)^{4/q})\\
&=O((n\log\log n)^{1/2}).
\end{align*}
By Corollary~2.8 in~\cite{CM15}, enlarging our probability space if necessary, it is possible to find a sequence $(Z_k)_{k\ge 1}$ of independent
centered Gaussion variables such that
\[
\sup_{1\le k\le n}\big|\sum_{i=1}^{k}\psi_{\sigma^i\omega}\circ F_\omega^i-\sum_{i=1}^{k}Z_i\big|=O(n^{1/4}(\log n)^{1/2}(\log\log n)^{1/4}) \quad
\text{a.s.~} x\in \Delta_\omega.
\]
Since
\[
\max_{0\le k\le n-1}|\chi_{\sigma^k\omega}\circ F_\omega^k|=O(n^{2/q}(\log n)^{1/q}(\log\log n)^{2/q}) \quad \text{a.s.} \ x\in \Delta_\omega,
\]
we obtain that
\begin{align*}
&\sup_{1\le k\le n}\big|\sum_{i=1}^{k}\phi_{\sigma^i\omega}\circ F_\omega^i-\sum_{i=1}^{k}Z_i\big|\\
=&O(n^{1/4}(\log n)^{1/2}(\log\log n)^{1/4})+ O(n^{2/q}(\log n)^{1/q}(\log\log n)^{2/q})\\
=&O(n^{1/4}(\log n)^{1/2}(\log\log n)^{1/4}) \quad\text{a.s.~} x\in \Delta_\omega.
\end{align*}
\end{proof}

\section{Proofs of main results}

\subsection{Proof of Theorem~\ref{thnon1}}
\setcounter{equation}{0}
\begin{lem}\label{var}
Consider the RYT satisfying (P1)-(P8) with $a>5$. Let $\delta>0$ be small s.t. $q=\frac{a-1-\delta}{\delta+1}\ge4$. Let $\eta_n^2(\omega):=\int \big(\sum_{i=0}^{n-1}\psi_{\sigma^i\omega}\circ F_{\omega}^{i}\big)^2 \rmd\mu_\omega$, $\Sigma^2:=\E\int \psi_\omega^2\rmd\mu_\omega$. Then for a.e. $\omega\in \Omega$,
\[
\Sigma_n^2(\omega)-\eta_n^2(\omega)=O_{\omega,\delta}(n^{\frac{1}{2}+\frac{1}{q}}),
\]
\[
\lim_{n\to \infty}\frac{\Sigma_n^2(\omega)}{n}=\lim_{n\to \infty}\frac{\eta_n^2(\omega)}{n}=\Sigma^2.
\]
\end{lem}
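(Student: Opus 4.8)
The plan is to feed the primary martingale-coboundary decomposition of Lemma~\ref{pmd} into the definition of $\Sigma_n^2$. Write $S_n^\omega:=\sum_{i=0}^{n-1}\phi_{\sigma^i\omega}\circ F_\omega^i=M_n^\omega+R_n^\omega$, where $M_n^\omega:=\sum_{i=0}^{n-1}\psi_{\sigma^i\omega}\circ F_\omega^i$ is the reverse martingale and $R_n^\omega:=\chi_{\sigma^n\omega}\circ F_\omega^n-\chi_\omega$ is the coboundary remainder. Expanding the square inside $\Sigma_n^2(\omega)=\int (S_n^\omega)^2\rmd\mu_\omega$ and recalling $\eta_n^2(\omega)=\int (M_n^\omega)^2\rmd\mu_\omega$ gives
\[
\Sigma_n^2(\omega)-\eta_n^2(\omega)=2\int M_n^\omega R_n^\omega\,\rmd\mu_\omega+\int (R_n^\omega)^2\,\rmd\mu_\omega ,
\]
so, by Cauchy--Schwarz, everything reduces to controlling $\|R_n^\omega\|_{L^2(\mu_\omega)}$ and $\|M_n^\omega\|_{L^2(\mu_\omega)}=\eta_n(\omega)$.

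For the remainder, since $\chi\in L^q(\Delta,\mu)$ by Lemma~\ref{pmd} and $q\ge4$, Lemma~\ref{est} applied with $p=q$ gives $\|\chi_{\sigma^n\omega}\circ F_\omega^n\|_{L^2(\mu_\omega)}^2=\int|\chi_{\sigma^n\omega}\circ F_\omega^n|^2\rmd\mu_\omega=O_{\omega,q}(n^{2/q})$, while $\|\chi_\omega\|_{L^2(\mu_\omega)}=O_\omega(1)$ because $\E\int\chi_\omega^2\rmd\mu_\omega=\|\chi\|_{L^2(\Delta,\mu)}^2<\infty$; hence $\|R_n^\omega\|_{L^2(\mu_\omega)}=O_{\omega,q}(n^{1/q})$. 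For the martingale part, the reverse martingale difference property yields orthogonality of the increments, so $\eta_n^2(\omega)=\sum_{i=0}^{n-1}\int(\psi_{\sigma^i\omega}\circ F_\omega^i)^2\rmd\mu_\omega=\sum_{i=0}^{n-1}g(\sigma^i\omega)$ with $g(\omega):=\int\psi_\omega^2\rmd\mu_\omega$, where equivariance of $\{\mu_{\sigma^i\omega}\}$ was used. Since $\psi\in L^q(\Delta,\mu)\subset L^2(\Delta,\mu)$, we have $g\in L^1(\P)$ with $\E g=\E\int\psi_\omega^2\rmd\mu_\omega=\Sigma^2$, so Birkhoff's ergodic theorem for the ergodic map $\sigma$ gives $\eta_n^2(\omega)/n\to\Sigma^2$ for a.e.\ $\omega$; in particular $\eta_n(\omega)=O_\omega(n^{1/2})$, which is the second displayed limit already.

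Combining the two bounds through Cauchy--Schwarz, $\bigl|\int M_n^\omega R_n^\omega\rmd\mu_\omega\bigr|\le\eta_n(\omega)\,\|R_n^\omega\|_{L^2(\mu_\omega)}=O_{\omega,q}\!\bigl(n^{1/2+1/q}\bigr)$, and $\int(R_n^\omega)^2\rmd\mu_\omega=O_{\omega,q}(n^{2/q})$; since $q\ge4$ forces $2/q\le\tfrac12\le\tfrac12+\tfrac1q$, the cross term dominates and $\Sigma_n^2(\omega)-\eta_n^2(\omega)=O_{\omega,\delta}\!\bigl(n^{\frac12+\frac1q}\bigr)$. Dividing by $n$ and using $-\tfrac12+\tfrac1q<0$ (as $q\ge4>2$), we get $(\Sigma_n^2(\omega)-\eta_n^2(\omega))/n\to0$, so $\Sigma_n^2(\omega)/n\to\Sigma^2$ as well. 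The proof is essentially routine; the only points needing care are checking that $\psi,\chi\in L^q$ is enough both for the $L^1(\P)$-integrability of $g$ used in Birkhoff and for the polynomial remainder estimates from Lemma~\ref{est}, and verifying that with $q\ge4$ it is the cross term $n^{1/2+1/q}$ — not the square term $n^{2/q}$ — that sets the rate, so that the claimed exponent $\tfrac12+\tfrac1q$ is exactly what comes out.
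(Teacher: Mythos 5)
Your proposal is correct and follows essentially the same route as the paper: expand $\Sigma_n^2-\eta_n^2$ via the primary martingale--coboundary decomposition, bound the cross term by Cauchy--Schwarz using $\|R_n^\omega\|_{L^2}=O(n^{1/q})$ (from Lemma~\ref{est} with $\chi\in L^q$) and $\eta_n(\omega)=O(n^{1/2})$, and get the limits from Birkhoff. The only cosmetic difference is that you obtain $\eta_n(\omega)=O(n^{1/2})$ directly from orthogonality of the reverse martingale differences plus Birkhoff, whereas the paper cites Lemma~\ref{mme} (Burkholder--Davis--Gundy) for that bound; both are valid.
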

\begin{proof}
It follows from Lemma~\ref{pmd} and the H\"older inequality that
\begin{align*}
&\Sigma_n^2(\omega)-\eta_n^2(\omega)\\
=&\int\big(\sum_{i=0}^{n-1}\phi_{\sigma^i\omega}\circ F_{\omega}^{i}-\sum_{i=0}^{n-1}\psi_{\sigma^i\omega}\circ F_{\omega}^{i}\big)\big(\sum_{i=0}^{n-1}\phi_{\sigma^i\omega}\circ F_{\omega}^{i}+\sum_{i=0}^{n-1}\psi_{\sigma^i\omega}\circ F_{\omega}^{i}\big)\rmd\mu_\omega\\
=&\int\big(\chi_{\sigma^n\omega}\circ F_\omega^n-\chi_\omega\big)\big(2\sum_{i=0}^{n-1}\psi_{\sigma^i\omega}\circ F_{\omega}^{i}+\chi_{\sigma^n\omega}\circ F_\omega^n-\chi_\omega\big)\rmd\mu_\omega\\
=&\int2\big(\chi_{\sigma^n\omega}\circ F_\omega^n-\chi_\omega\big)\big(\sum_{i=0}^{n-1}\psi_{\sigma^i\omega}\circ F_{\omega}^{i}\big)\rmd\mu_\omega+\int\big(\chi_{\sigma^n\omega}\circ F_\omega^n-\chi_\omega\big)^2\rmd\mu_\omega\\
\le&2\big\|\chi_{\sigma^n\omega}\circ F_\omega^n-\chi_\omega\big\|_{L^2(\mu_\omega)}\big\|\sum_{i=0}^{n-1}\psi_{\sigma^i\omega}\circ F_{\omega}^{i}\big\|_{L^2(\mu_\omega)}+\int\big(\chi_{\sigma^n\omega}\circ F_\omega^n-\chi_\omega\big)^2\rmd\mu_\omega\\
\le& Cn^{\frac{1}{2}+\frac{1}{q}}+Cn^{\frac{2}{q}}=O_{\omega,\delta}(n^{\frac{1}{2}+\frac{1}{q}}),
\end{align*}
where the last inequality is due to Lemmas~\ref{est} and \ref{mme} with $\chi\in L^q(\Delta, \mu)$.

By Birkhoff's ergodic theory, for a.e. $\omega\in \Omega$,
\[
\lim_{n\to\infty}\frac{1}{n}\sum_{k=0}^{n-1}\int \psi_{\sigma^k \omega}^2\circ F_{\omega}^{k}\rmd\mu_\omega
=\E\int\psi_\omega^2\rmd\mu_\omega=\Sigma^2.
\]
Hence,
\[
\lim_{n\to \infty}\frac{\Sigma_n^2(\omega)}{n}=\lim_{n\to \infty}\frac{\eta_n^2(\omega)}{n}=\Sigma^2.
\]
\end{proof}

\begin{proof}[Proof of Theorem~\ref{thnon1}]
(1) The result follows from Lemma~\ref{var}.\\
(2) If $\Sigma^2=\E\int \psi_\omega^2\rmd\mu_\omega=\E\int (\phi_{\omega}-\chi_{\sigma\omega}\circ F_\omega+\chi_\omega)^2 \rmd\mu_\omega=0$, then for a.e. $\omega\in \Omega$,
\[
\phi_{\omega}-\chi_{\sigma\omega}\circ F_\omega+\chi_\omega=0\quad \mu_\omega-a.s.
\]
So $\phi$ is a coboundary with $\chi\in L^q(\Delta, \mu)$.\\
(3) Note that for a.e. $\omega\in \Omega$, $t\in [0,1]$,
\begin{align*}
&\frac{1}{n}\sum_{i=0}^{[nt]-1}[P_{\sigma^i\omega}(\psi_{\sigma^i\omega}^2)]\circ F_\omega^{i+1}\\
=&\frac{1}{n}\sum_{i=0}^{[nt]-1}\breve \phi_{\sigma^i\omega}\circ F_\omega^i+\frac{1}{n}\sum_{i=0}^{[nt]-1}\int \psi_{\sigma^i\omega}^2\circ F_\omega^i\rmd\mu_\omega.
\end{align*}
Since $\frac{1}{n}\|\sum_{i=0}^{[nt]-1}\breve \phi_{\sigma^i\omega}\circ F_\omega^i\|_{L^2(\mu_\omega)}=O_\omega(n^{-1/2})$ and $\frac{1}{n}\sum_{i=0}^{[nt]-1}\int \psi_{\sigma^i\omega}^2\circ F_\omega^i \rmd\mu_\omega\to t\Sigma^2$ a.s., we obtain that
\[
\frac{1}{n}\sum_{i=0}^{[nt]-1}[P_{\sigma^i\omega}(\psi_{\sigma^i\omega}^2)]\circ F_\omega^{i+1}\to_w t\Sigma^2.
\]
Then we can apply the Appendix A to the case
\[M_n^\omega(t)=\frac{1}{\sqrt n}\Big(\sum_{i=0}^{[nt]-1}\psi_{\sigma^i\omega}\circ F_\omega^i+(nt-[nt])\psi_{\sigma^{[nt]}\omega}\circ F_\omega^{[nt]}\Big)\]
to deduce that $M_n^\omega\to_wW$ in $C[0,1]$.

By the relation that $\phi_{\omega}=\psi_\omega+\chi_{\sigma\omega}\circ F_\omega-\chi_\omega$, we have
\[
\Big\|\sup_{t\in[0,1]}|W_{n}^\omega(t)-M_n^\omega(t)|\Big\|_{L^q(\mu_\omega)}\leq n^{-1/2}\Big\|\max_{1\le k\le n}\big|\chi_{\sigma^{k}\omega}\circ F_\omega^{k}-\chi_\omega\big|\Big\|_{L^q(\mu_\omega)}\to 0.
\]
Hence for a.e. $\omega\in \Omega$, $W_n^\omega\to_wW$ in $C[0,1]$.
\end{proof}

\subsection{Proof of Theorem~\ref{thnon2}}
\setcounter{equation}{0}
We recall that for a.e. $\omega\in \Omega$,
$\eta_n^2(\omega)=\int \big(\sum_{i=0}^{n-1}\psi_{\sigma^i\omega}\circ F_{\omega}^{i}\big)^2 \rmd\mu_\omega$. For every $t\in[0,1]$, set
\[
r_n^\omega(t):=\min\{1\le k\le n: t\eta_n^2(\omega)\le\eta_{k}^2(\omega)\}.
\]
Similar to $\overline W_n^\omega$, we define the following continuous processes $\overline M_{n}^\omega(t)\in C[0,1]$ by
\begin{equation}\label{exp}
\overline M_{n}^\omega(t):=\frac{1}{\eta_n(\omega)}\bigg[\sum_{i=0}^{r_n^\omega-1}\psi_{\sigma^i\omega}\circ  F_{\omega}^i+\frac{t\eta_n^2-\eta_{r_n^\omega-1}^2}{\eta_{r_n^\omega}^2-\eta_{r_n^\omega-1}^2}
\psi_{\sigma^{r_n^\omega}\omega}\circ  F_\omega^{r_n^\omega}\bigg],\quad t\in[0,1].
\end{equation}

{\bf Step 1. Estimation of the convergence rate between $\overline W_n^\omega$ and $\overline M_n^\omega$.}

\begin{lem}\label{wmn}
For a.e. $\omega\in \Omega$, there exists a constant $C=C_{\omega,q}>0$ such that for all $n\ge 1$ ,
\[\Big\|\sup_{t\in[0,1]}|\overline W_{n}^\omega(t)-\overline M_n^\omega(t)|\Big\|_{L^q(\mu_\omega)}\leq Cn^{-\frac{1}{4}+\frac{1}{2q}}.\]
\end{lem}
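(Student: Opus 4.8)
The plan is to bound the supremum distance between the two self-normalized processes by comparing, term by term, the pieces that define them: the partial sums $\sum_{i=0}^{k-1}\phi_{\sigma^i\omega}\circ F_\omega^i$ versus $\sum_{i=0}^{k-1}\psi_{\sigma^i\omega}\circ F_\omega^i$, the normalizations $\Sigma_n(\omega)$ versus $\eta_n(\omega)$, and the time changes $N_n^\omega(t)$ versus $r_n^\omega(t)$. The martingale-coboundary decomposition of Lemma~\ref{pmd} tells us that these partial sums differ only by $\chi_{\sigma^k\omega}\circ F_\omega^k-\chi_\omega$, and Lemma~\ref{est} controls $\max_{0\le k\le n}|\chi_{\sigma^k\omega}\circ F_\omega^k|$ in $L^q$, while Lemma~\ref{var} gives $\Sigma_n^2(\omega)-\eta_n^2(\omega)=O_{\omega,\delta}(n^{1/2+1/q})$ together with $\Sigma_n^2(\omega),\eta_n^2(\omega)\sim n\Sigma^2$. (If $\Sigma^2=0$ then $\phi$ is a coboundary by Theorem~\ref{thnon1}(2) and $\overline W_n^\omega\equiv\overline M_n^\omega\equiv 0$, so we may assume $\Sigma^2>0$.)

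First I would rewrite $\overline W_n^\omega(t)$ as a linear interpolation of the sequence $k\mapsto \frac{1}{\Sigma_n(\omega)}\sum_{i=0}^{k-1}\phi_{\sigma^i\omega}\circ F_\omega^i$ evaluated at the (non-integer) ``time'' determined by $t\Sigma_n^2=\Sigma_k^2$, and similarly for $\overline M_n^\omega(t)$ with $\eta$ in place of $\Sigma$; both interpolations are piecewise linear in the respective variance clocks. The supremum over $t$ of the difference is then bounded by three contributions: (i) replacing $\phi$-sums by $\psi$-sums inside both processes, which costs $n^{-1/2}\|\max_{1\le k\le n}|\chi_{\sigma^k\omega}\circ F_\omega^k-\chi_\omega|\|_{L^q(\mu_\omega)}=O_{\omega,q}(n^{-1/2+2/q})$ by Lemma~\ref{est}; (ii) replacing the normalization $\Sigma_n(\omega)$ by $\eta_n(\omega)$, which by Lemma~\ref{var} and $\sqrt{\Sigma_n^2}-\sqrt{\eta_n^2}=\frac{\Sigma_n^2-\eta_n^2}{\Sigma_n+\eta_n}=O_{\omega,\delta}(n^{-1/2+1/q}\cdot n^{1/2})=O_{\omega,\delta}(n^{1/q})$ multiplied against $\frac{1}{\eta_n\Sigma_n}\|\max_k|\sum_{i=0}^{k-1}\psi_{\sigma^i\omega}\circ F_\omega^i|\|_{L^q}$, which is $O_{\omega,q}(n^{1/q}\cdot n^{-1}\cdot n^{1/2})=O_{\omega,q}(n^{-1/2+1/q})$ using Lemma~\ref{mme}; and (iii) the mismatch between the clocks $N_n^\omega(t)$ and $r_n^\omega(t)$.

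The main obstacle is step (iii): the two variance clocks $k\mapsto\Sigma_k^2(\omega)$ and $k\mapsto\eta_k^2(\omega)$ need not be monotone in $k$ in general, but here the increment $\eta_k^2-\eta_{k-1}^2=\int(\psi_{\sigma^{k-1}\omega}\circ F_\omega^{k-1})^2\,\rmd\mu_\omega+2\int(\psi_{\sigma^{k-1}\omega}\circ F_\omega^{k-1})(\sum_{i<k-1}\psi_{\sigma^i\omega}\circ F_\omega^i)\,\rmd\mu_\omega$; the martingale property kills the cross term, so $\eta_k^2$ is genuinely nondecreasing and $N_n^\omega$, $r_n^\omega$ are well defined. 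Because $\Sigma_n^2-\eta_n^2=O(n^{1/2+1/q})$ while the clocks grow linearly, the times $N_n^\omega(t)$ and $r_n^\omega(t)$ differ by $O(n^{1/2+1/q})$ for each $t$; over that many steps the $\psi$-martingale moves by $O((n^{1/2+1/q})^{1/2})=O(n^{1/4+1/(2q)})$ in $L^q$ by a maximal/Burkholder estimate (Lemma~\ref{mme} applied to a block of length $O(n^{1/2+1/q})$ starting at a suitable index, plus a chaining argument over $t$ since $r_n^\omega,N_n^\omega$ are monotone step functions). Dividing by $\eta_n(\omega)\asymp n^{1/2}$ yields $O_{\omega,q}(n^{-1/4+1/(2q)})$, which is the dominant term. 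Collecting (i), (ii), (iii) gives the claimed bound $Cn^{-1/4+1/(2q)}$; I would finish by noting that the chaining over $t$ only inflates constants, since the relevant increments are indexed by at most $n$ breakpoints and the $L^q$ norm of a maximum over $n$ terms each of size $O(n^{1/4+1/(2q)})$ is still $O(n^{1/4+1/(2q)+1/q})$ — here one must be slightly careful and instead use the maximal form of Lemma~\ref{mme} directly on the interpolated martingale rather than a crude union bound, so that no extra $n^{1/q}$ is lost.
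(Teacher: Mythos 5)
Your decomposition and estimates mirror the paper's proof almost exactly: the same three sources of error (coboundary correction, normalization swap controlled via Lemma~\ref{var} and Lemma~\ref{mom}, and a clock mismatch $\sup_t|N_n^\omega-r_n^\omega|=O(n^{1/2+1/q})$ handled by a maximal martingale inequality over the mismatched block), with the clock-mismatch term dominating at $O(n^{-1/4+1/(2q)})$. One slip worth fixing: in step (i) you cite Lemma~\ref{est} and claim the coboundary cost is $n^{-1/2}\|\max_k|\chi_{\sigma^k\omega}\circ F_\omega^k-\chi_\omega|\|_{L^q(\mu_\omega)}=O(n^{-1/2+2/q})$, but Lemma~\ref{est} only provides pointwise a.s.\ and second-moment bounds, and $O(n^{-1/2+2/q})$ fails to be $\le Cn^{-1/4+1/(2q)}$ for $4\le q<6$ (for $q=4$ it is merely $O(1)$). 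The correct tool is Lemma~\ref{me}, which gives $\|\max_{0\le k\le n}|\chi_{\sigma^k\omega}\circ F_\omega^k|\|_{L^q(\mu_\omega)}=O_{\omega,q}(n^{1/q})$ and hence a cost $O(n^{-1/2+1/q})\le Cn^{-1/4+1/(2q)}$ for all $q\ge4$; this is what the paper uses.
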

\begin{proof}
We can estimate that
\begin{align*}
&\Big\|\sup_{t\in[0,1]}\big|\frac{1}{\Sigma_n(\omega)}\sum_{i=0}^{N_n^\omega-1}\phi_{\sigma^i\omega}\circ  F_{\omega}^i-\frac{1}{\eta_n(\omega)}\sum_{i=0}^{r_n^\omega-1}
\psi_{\sigma^i\omega}\circ  F_{\omega}^i\big|\Big\|_{L^q(\mu_\omega)}\\
\le&\big|\frac{1}{\Sigma_n(\omega)}-\frac{1}{\eta_n(\omega)}\big|\Big\|\sup_{t\in[0,1]}\big|\sum_{i=0}^{N_n^\omega-1}
\phi_{\sigma^i\omega}\circ F_{\omega}^i\big|\Big\|_{L^q(\mu_\omega)}\\
&\quad\quad+\frac{1}{\eta_n(\omega)}\Big\|\sup_{t\in[0,1]}\big|\sum_{i=0}^{N_n^\omega-1}\phi_{\sigma^i\omega}\circ  F_{\omega}^i-\sum_{i=0}^{r_n^\omega-1}
\psi_{\sigma^i\omega}\circ  F_{\omega}^i\big|\Big\|_{L^q(\mu_\omega)}.
\end{align*}
For the first term, by Lemmas~\ref{mom} and ~\ref{var}, we have for a.e. $\omega\in \Omega$,
\begin{align*}
&\big|\frac{1}{\Sigma_n(\omega)}-\frac{1}{\eta_n(\omega)}\big|\Big\|\sup_{t\in[0,1]}\big|\sum_{i=0}^{N_n^\omega(t)-1}
\phi_{\sigma^i\omega}\circ  F_{\omega}^i\big|\Big\|_{L^q(\mu_\omega)}\\
= &\big|\frac{\Sigma_n(\omega)-\eta_n(\omega)}{\Sigma_n(\omega)\cdot \eta_n(\omega)}\big|\Big\|\max_{1\le k\le n}\big|\sum_{i=0}^{k-1}\phi_{\sigma^i\omega}\circ  F_{\omega}^i\big|\Big\|_{L^q(\mu_\omega)}\\
\le &Cn^{-1+\frac{1}{q}}\cdot n^\frac{1}{2}=Cn^{-\frac{1}{2}+\frac{1}{q}}.
\end{align*}
To deal with the second term, we first note that for a.e. $\omega\in \Omega$,
\begin{align*}
\max_{1\le j\le n}|\Sigma_j^2(\omega)-\eta_j^2(\omega)|\le Cn^{\frac{1}{2}+\frac{1}{q}},\quad\eta_{n+m}^{2}(\omega)-\eta_{n}^{2}(\omega)\ge C m.
\end{align*}
Then we have
\[
\eta_{r_n^\omega-1}^2+O(n^{\frac{1}{2}+\frac{1}{q}})< t\eta_{n}^{2}+O(n^{\frac{1}{2}+\frac{1}{q}})=t\Sigma_n^2\le\Sigma_{N_n^\omega}^2
=\eta_{N_n^\omega}^{2}+O(n^{\frac{1}{2}+\frac{1}{q}})<\eta_{N_n^\omega+m}^{2}-C m+O(n^{\frac{1}{2}+\frac{1}{q}}).
\]
Without loss of generality, we assume that $r_n^\omega>N_n^\omega$. Taking $m=r_n^\omega-N_n^\omega$, we have
\[
\sup_{t\in [0,1]}|r_n^\omega-N_n^\omega|\le\max_{1\le j\le n}|\eta_{j}^2(\omega)-\eta_{j-1}^2(\omega)|+ O(n^{\frac{1}{2}+\frac{1}{q}})\le Cn^{\frac{1}{2}+\frac{1}{q}}.
\]
Hence,
\begin{align*}
&\Big\|\sup_{t\in[0,1]}\big|(\sum_{i=0}^{N_n^\omega-1}-\sum_{i=0}^{r_n^\omega-1})\psi_{\sigma^i\omega}\circ  F_{\omega}^i\big|\Big\|_{L^q(\mu_\omega)}\\
=&\Big\|\sup_{t\in[0,1]}\big|\sum_{i=N_n^\omega}^{r_n^\omega-1}\psi_{\sigma^i\omega}\circ  F_{\omega}^i\big|\Big\|_{L^q(\mu_\omega)}\\
\le &\Big\|\sup_{t\in[0,1]}\big|\sum_{i=0}^{r_n^\omega-N_n^\omega}\psi_{\sigma^i\omega}\circ  F_{\omega}^i\big|\Big\|_{L^q(\mu_\omega)}\\
\le &{\sup_{t\in [0,1]}|r_n^\omega-N_n^\omega|}^{1/2}=O(n^{\frac{1}{4}+\frac{1}{2q}}).
\end{align*}
Since $\phi_{\omega}=\psi_\omega+\chi_{\sigma\omega}\circ F_\omega-\chi_\omega$ and $\chi,\psi\in L^q(\Delta,\mu)$, by Lemma~\ref{me}, we can estimate the second term that
\begin{align*}
&\frac{1}{\eta_n(\omega)}\Big\|\sup_{t\in[0,1]}\big|\sum_{i=0}^{N_n^\omega-1}\phi_{\sigma^i\omega}\circ  F_{\omega}^i-\sum_{i=0}^{r_n^\omega-1}
\psi_{\sigma^i\omega}\circ  F_{\omega}^i\big|\Big\|_{L^q(\mu_\omega)}\\
=&\frac{1}{\eta_n(\omega)}\Big\|\sup_{t\in[0,1]}\big|\sum_{i=0}^{N_n^\omega-1}
(\psi_{\sigma^i\omega}+\chi_{\sigma^{i+1}\omega}\circ F_{\sigma^{i}\omega}-\chi_{\sigma^i\omega})\circ F_{\omega}^i-\sum_{i=0}^{r_n^\omega-1}
\psi_{\sigma^i\omega}\circ  F_{\omega}^i\big|\Big\|_{L^q(\mu_\omega)}\\
\le &\frac{1}{\eta_n(\omega)}\Big\|\sup_{t\in[0,1]}\big|\sum_{i=N_n^\omega}^{r_n^\omega-1}\psi_{\sigma^i\omega}\circ  F_{\omega}^i\big|\Big\|_{L^q(\mu_\omega)}
+\frac{1}{\eta_n(\omega)}\Big\|\sup_{t\in[0,1]}\big|\chi_{\sigma^{N_n^\omega}\omega}\circ F_\omega^{N_n^\omega}-\chi_\omega\big|\Big\|_{L^q(\mu_\omega)}\\
\le &\frac{1}{\eta_n(\omega)}O(n^{\frac{1}{4}+\frac{1}{2q}})+\frac{2}{\eta_n(\omega)}\|\max_{0\le k\le n-1}|\chi_{\sigma^k\omega}\circ F_\omega^k|\|_{L^q(\mu_\omega)}\\
\le& Cn^{-\frac{1}{4}+\frac{1}{2q}}+Cn^{-\frac{1}{2}+\frac{1}{q}}\le Cn^{-\frac{1}{4}+\frac{1}{2q}}.
\end{align*}
Moreover, by Lemma~\ref{me}, we have
\begin{align*}
&\Big\|\sup_{t\in[0,1]}|\overline W_{n}^\omega(t)-\frac{1}{\Sigma_n(\omega)}\sum_{i=0}^{N_n^\omega-1}\phi_{\sigma^i\omega}\circ  F_{\omega}^i|\Big\|_{L^q(\mu_\omega)}\\
\le&\frac{1}{\Sigma_n(\omega)}\big\|\max_{0\le i\le n-1 }|\phi_{\sigma^i\omega}\circ  F_{\omega}^i|\big\|_{L^q(\mu_\omega)}\le Cn^{-\frac{1}{2}+\frac{1}{q}},
\end{align*}
and
\begin{align*}
&\Big\|\sup_{t\in[0,1]}|\overline M_{n}^\omega(t)-\frac{1}{\eta_n(\omega)}\sum_{i=0}^{r_n^\omega(t)-1}\psi_{\sigma^i\omega}\circ  F_{\omega}^i|\Big\|_{L^q(\mu_\omega)}\\
\le&\frac{1}{\eta_n(\omega)}\big\|\max_{0\le i\le n-1 }|\psi_{\sigma^i\omega}\circ  F_{\omega}^i|\big\|_{L^q(\mu_\omega)}\le Cn^{-\frac{1}{2}+\frac{1}{q}}.
\end{align*}
Then we obtain the conclusion based on the above estimates.
\end{proof}

Let $n\ge 1$. For a.e. $\omega\in \Omega$ and $1\le l\le n$, define
\[
V_{n,l}^{\omega}:=\sum_{i=1}^{l}\E_{\mu_\omega}\big(\frac{1}{\eta_n^2(\omega)}\psi_{\sigma^{n-i} \omega}^2\circ F_\omega^{n-i}\big|(F_{\omega}^{n-(i-1)})^{-1}\cal B_{\sigma^{n-(i-1)} \omega}\big).
\]
For the convenience, we set $V_{n,0}^{\omega}=0$.

For a.e. $\omega\in \Omega$, define a stochastic process $X_n^\omega$ with sample paths in $C[0,1]$ by
\begin{eqnarray}\label{xn}
X_{n}^\omega(t):=\frac{1}{\eta_n(\omega)}\Big[\sum_{i=1}^{k}\psi_{\sigma^{n-i} \omega}\circ F_\omega^{n-i}+\frac{tV_{n,n}^\omega-V_{n,k}^\omega}{V_{n,k+1}^\omega-V_{n,k}^\omega}\psi_{\sigma^{n-(k+1)} \omega}\circ F_\omega^{n-(k+1)}\Big],
\end{eqnarray}
if $V_{n,k}^\omega\leq tV_{n,n}^\omega<V_{n,k+1}^\omega$.
\vskip 3mm

{\bf Step 2. Estimation of the Wasserstein convergence rate between $X_n^\omega$ and $B$. }

\begin{prop}\label{vn1}
For a.e. $\omega\in \Omega$, there exists a constant $C=C_{\omega,\psi,q}>0$ such that for all $n\ge 1$ ,
\[\Big\|V_{n,n}^\omega-
1\Big\|_{L^{q/2}(\mu_\omega)}\le C n^{-\frac{1}{2}}.\]
\end{prop}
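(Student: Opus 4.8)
The plan is to reduce $V_{n,n}^\omega - 1$ to a Birkhoff sum of $\breve\phi$ plus controllable errors, then apply Corollary~\ref{smb} together with the comparison between $\eta_n^2$ and $n\Sigma^2$ from Lemma~\ref{var}. First I would unwind the definition of $V_{n,n}^\omega$ using \eqref{cvm}: since $\psi_\omega\in\ker P_\omega$ is not directly relevant here, I instead use that $\E_{\mu_\omega}(\psi_{\sigma^{n-i}\omega}^2\circ F_\omega^{n-i}\,|\,(F_\omega^{n-(i-1)})^{-1}\mathcal B_{\sigma^{n-(i-1)}\omega}) = [P_{\sigma^{n-i}\omega}(\psi_{\sigma^{n-i}\omega}^2)]\circ F_\omega^{n-(i-1)}$. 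Writing $P_{\sigma^{n-i}\omega}(\psi_{\sigma^{n-i}\omega}^2) = (P_{\sigma^{n-i}\omega}(\psi_{\sigma^{n-i}\omega}^2 - \int\psi^2\,\rmd\mu)) + \int\psi_{\sigma^{n-i}\omega}^2\,\rmd\mu_{\sigma^{n-i}\omega}$, the first piece composed with $F_\omega^{n-(i-1)}$ is exactly $\breve\phi_{\sigma^{n-i}\omega}\circ F_\omega^{n-i}$ by the definition $\breve\phi_\omega = [P_\omega(\psi_\omega^2 - \int\psi_\omega^2\,\rmd\mu_\omega)]\circ F_\omega$. Hence
\[
\eta_n^2(\omega)\,V_{n,n}^\omega = \sum_{i=1}^{n}\breve\phi_{\sigma^{n-i}\omega}\circ F_\omega^{n-i} + \sum_{i=1}^{n}\int\psi_{\sigma^{n-i}\omega}^2\,\rmd\mu_{\sigma^{n-i}\omega}
= \sum_{j=0}^{n-1}\breve\phi_{\sigma^j\omega}\circ F_\omega^j + \sum_{j=0}^{n-1}\E_{\mu_{\sigma^j\omega}}(\psi_{\sigma^j\omega}^2).
\]

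Next I would control the two sums. For the martingale-coboundary part, Corollary~\ref{smb} gives
$\big\|\max_{1\le k\le n}|\sum_{i=0}^{k-1}\breve\phi_{\sigma^i\omega}\circ F_\omega^i|\big\|_{L^{q/2}(\mu_\omega)} = O_{\omega,q}(n^{1/2})$,
so in particular $\big\|\sum_{j=0}^{n-1}\breve\phi_{\sigma^j\omega}\circ F_\omega^j\big\|_{L^{q/2}(\mu_\omega)} = O_{\omega,q}(n^{1/2})$. For the deterministic part, recall $\eta_n^2(\omega) = \int(\sum_{i=0}^{n-1}\psi_{\sigma^i\omega}\circ F_\omega^i)^2\,\rmd\mu_\omega$; since $\{\psi_{\sigma^i\omega}\circ F_\omega^i\}$ is a (reverse) martingale difference sequence, the cross terms vanish and $\eta_n^2(\omega) = \sum_{i=0}^{n-1}\int\psi_{\sigma^i\omega}^2\circ F_\omega^i\,\rmd\mu_\omega = \sum_{j=0}^{n-1}\E_{\mu_{\sigma^j\omega}}(\psi_{\sigma^j\omega}^2)$, using that $F_\omega^j$ pushes $\mu_\omega$ to $\mu_{\sigma^j\omega}$. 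Therefore the deterministic sum equals $\eta_n^2(\omega)$ exactly, and
\[
\eta_n^2(\omega)\,V_{n,n}^\omega - \eta_n^2(\omega) = \sum_{j=0}^{n-1}\breve\phi_{\sigma^j\omega}\circ F_\omega^j,
\]
so that $\big\|V_{n,n}^\omega - 1\big\|_{L^{q/2}(\mu_\omega)} = \eta_n^{-2}(\omega)\,O_{\omega,q}(n^{1/2})$.

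Finally, Lemma~\ref{var} yields $\eta_n^2(\omega)/n \to \Sigma^2$ for a.e.\ $\omega$; assuming $\Sigma^2 > 0$ (the case $\Sigma^2 = 0$ being excluded in the regime where $\overline W_n^\omega$ is defined, as $\Sigma_n \to \infty$), there is $C_\omega$ with $\eta_n^2(\omega) \ge C_\omega^{-1} n$ for all $n \ge 1$, whence $\big\|V_{n,n}^\omega - 1\big\|_{L^{q/2}(\mu_\omega)} \le C_{\omega,\psi,q}\, n^{-1/2}$, as claimed. The only delicate point is bookkeeping the conditional-expectation/transfer-operator identity \eqref{cvm} with the index shift $n-i \mapsto n-(i-1)$ correctly, and making sure the constant $C_\omega$ from the lower bound on $\eta_n^2$ is legitimately uniform in $n$ (which follows since $\eta_n^2$ is a strictly increasing sequence with $\eta_1^2(\omega) = \E_{\mu_\omega}\psi_\omega^2 > 0$ on a full-measure set, bounded below by a linear function once $n$ is large and by $\eta_1^2$ for small $n$). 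I expect this last uniformity check to be the main, though minor, obstacle; everything else is a direct consequence of Corollary~\ref{smb} and Lemma~\ref{var}.
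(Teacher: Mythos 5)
Your proposal is correct and follows essentially the same route as the paper: both reduce $V_{n,n}^\omega-1$ to $\eta_n^{-2}(\omega)\sum_{i=1}^{n}\breve{\phi}_{\sigma^{n-i}\omega}\circ F_\omega^{n-i}$ via the identity $\eta_n^2(\omega)=\sum_{i=1}^{n}\int\psi_{\sigma^{n-i}\omega}^2\circ F_\omega^{n-i}\,\rmd\mu_\omega$ (cross terms vanishing by the martingale-difference property), and then invoke Corollary~\ref{smb} and $\eta_n^2(\omega)\sim n\Sigma^2$. The index bookkeeping and the lower bound on $\eta_n^2$ that you flag are handled exactly as you describe.
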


\begin{proof}
For a.e. $\omega\in \Omega$ and $1\le j\le n$, we define $\alpha_{j}^{2}(\omega)=\sum_{i=1}^{j}\int\psi_{\sigma^{n-i}\omega}^2\circ  F_\omega^{n-i}\rmd\mu_\omega$. Then $\alpha_{n}^{2}(\omega)=\eta_n^2(\omega)$ and
\[
\Big\|V_{n,n}^\omega-1\Big\|_{L^{q/2}(\mu_\omega)}=
\Big\|V_{n,n}^\omega-\frac{\alpha_{n}^{2}(\omega)}{\eta_n^2(\omega)}\Big\|_{L^{q/2}(\mu_\omega)}
\]
Hence,
\begin{align}\label{v2}
&\Big\|V_{n,n}^\omega-1\Big\|_{L^{q/2}(\mu_\omega)}\\\nonumber
=&\frac{1}{\eta_n^2(\omega)}\Big\|\sum_{i=1}^{n}\E_{\mu_\omega}(\psi_{\sigma^{n-i} \omega}^2\circ F_\omega^{n-i}|(F_{\omega}^{n-(i-1)})^{-1}\cal B_{\sigma^{n-(i-1)} \omega})-\sum_{i=1}^{n}\E_{\mu_\omega}(\psi_{\sigma^{n-i} \omega}^2\circ F_\omega^{n-i})\Big\|_{L^{q/2}(\mu_\omega)}\\\nonumber
=&\frac{1}{\eta_n^2(\omega)}\Big\|\sum_{i=1}^{n}\E_{\mu_\omega}\big(\psi_{\sigma^{n-i} \omega}^2\circ F_\omega^{n-i}-\E_{\mu_\omega}(\psi_{\sigma^{n-i} \omega}^2\circ F_\omega^{n-i})\big|( F_{\omega}^{n-(i-1)})^{-1}\cal B_{\sigma^{n-(i-1)} \omega}\big)\Big\|_{L^{q/2}(\mu_\omega)}\\\nonumber
=&\frac{1}{\eta_n^2(\omega)}\Big\|\sum_{i=1}^{n}\Big[P_{\sigma^{n-i}\omega}\big(\psi_{\sigma^{n-i}\omega}^{2}-\int\psi_{\sigma^{n-i}\omega}^{2}
\rmd\mu_{\sigma^{n-i}\omega}\big)\Big]\circ F_{\omega}^{n-(i-1)}
\Big\|_{L^{q/2}(\mu_\omega)}\\\nonumber
=&\frac{1}{\eta_n^2(\omega)}\Big\|\sum_{i=1}^{n}\breve{\phi}_{\sigma^{n-i}\omega}\circ F_{\omega}^{n-i}
\Big\|_{L^{q/2}(\mu_\omega)}\\\nonumber
\le&C_{\omega,\psi,q} n^{-\frac{1}{2}},
\end{align}
where the last inequality follows from Corollary~\ref{smb}.
\end{proof}

\begin{lem}\label{xnw}
For a.e. $\omega\in \Omega$ and for any $\varepsilon>0$, there exists a constant $C=C_{\omega,\varepsilon,\psi,q}>0$ such that $\mathcal{W}_{\frac{q}{2}}(X_{n}^\omega,B)\leq C n^{-(\frac{1}{4}-\varepsilon)}$ for all $n\ge 1$.
\end{lem}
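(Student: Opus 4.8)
\emph{Strategy.} The plan is to realise $X_n^\omega$ as a time change of a Brownian motion through the martingale version of the Skorokhod embedding theorem, and then to show that the embedding clock tracks deterministic time at rate $n^{-1/2}$. Put $d_i:=\eta_n(\omega)^{-1}\psi_{\sigma^{n-i}\omega}\circ F_\omega^{n-i}$ and $S_k^\omega:=\sum_{i=1}^{k}d_i$. By Lemma~\ref{pmd}, $(d_i)_{1\le i\le n}$ is a martingale difference sequence for the increasing filtration $\mathcal G_k:=(F_\omega^{n-k})^{-1}\mathcal B_{\sigma^{n-k}\omega}$, with $\E_{\mu_\omega}(d_i^2\mid\mathcal G_{i-1})=V_{n,i}^\omega-V_{n,i-1}^\omega$, and $X_n^\omega$ is the linear interpolation through the points $(t_k,S_k^\omega)$, $0\le k\le n$, where $t_k:=V_{n,k}^\omega/V_{n,n}^\omega$ (with $0/0:=0$ on the null set where a conditional variance increment vanishes). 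By the martingale Skorokhod embedding, on an enlarged probability space carrying $\mu_\omega$ there exist a standard Brownian motion $B$ and stopping times $0=\tau_0\le\tau_1\le\cdots\le\tau_n$ with $S_k^\omega=B(\tau_k)$ for all $k$, $\E_{\mu_\omega}(\tau_i-\tau_{i-1}\mid\mathcal G_{i-1})=V_{n,i}^\omega-V_{n,i-1}^\omega$, and $\E_{\mu_\omega}\big((\tau_i-\tau_{i-1})^{p}\mid\mathcal G_{i-1}\big)\le C_p\,\E_{\mu_\omega}\big(|d_i|^{2p}\mid\mathcal G_{i-1}\big)$ for $1\le p\le q/2$. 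Since $\mathcal W_{q/2}(X_n^\omega,B)\le\big(\E_{\mu_\omega}\sup_{t\in[0,1]}|X_n^\omega(t)-B(t)|^{q/2}\big)^{2/q}$ (using the coupling just constructed), it suffices to bound $\big\|\sup_{t\in[0,1]}|X_n^\omega(t)-B(t)|\big\|_{L^{q/2}(\mu_\omega)}$.

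\emph{Clock estimate.} I claim $\big\|\max_{1\le k\le n}|\tau_k-t_k|\big\|_{L^{q/2}(\mu_\omega)}=O_\omega(n^{-1/2})$. Write $\tau_k-t_k=(\tau_k-V_{n,k}^\omega)+V_{n,k}^\omega(1-1/V_{n,n}^\omega)$. As $0\le V_{n,k}^\omega\le V_{n,n}^\omega$, the second term is $\le|V_{n,n}^\omega-1|$, which is $O_\omega(n^{-1/2})$ in $L^{q/2}(\mu_\omega)$ by Proposition~\ref{vn1}. The first term is the $(\mathcal G_k)$-martingale $M_k:=\tau_k-V_{n,k}^\omega=\sum_{i=1}^{k}\big[(\tau_i-\tau_{i-1})-(V_{n,i}^\omega-V_{n,i-1}^\omega)\big]$; by Burkholder--Davis--Gundy and Minkowski,
\[
\big\|\max_{1\le k\le n}|M_k|\big\|_{L^{q/2}(\mu_\omega)}\ll\Big(\sum_{i=1}^{n}\big\|(\tau_i-\tau_{i-1})-(V_{n,i}^\omega-V_{n,i-1}^\omega)\big\|_{L^{q/2}(\mu_\omega)}^{2}\Big)^{1/2}.
\]
Estimating each increment by the embedding moment bound with $p=q/2$ and conditional Jensen's inequality, and using $\eta_n^2(\omega)\sim n\Sigma^2$ with $\Sigma^2>0$ (Lemma~\ref{var}) together with $\sum_{j=0}^{n-1}\int|\psi_{\sigma^j\omega}\circ F_\omega^j|^{q}\,\rmd\mu_\omega=O_\omega(n)$ (Birkhoff's theorem, since $\psi\in L^q$ by Lemma~\ref{pmd}), the right-hand side is $O_\omega(n^{-1/2})$, which proves the claim.

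\emph{Assembling.} For $t\in[t_k,t_{k+1}]$,
\[
|X_n^\omega(t)-B(t)|\le|X_n^\omega(t)-X_n^\omega(t_k)|+|B(\tau_k)-B(t_k)|+|B(t_k)-B(t)|.
\]
The first term is $\le\max_{1\le j\le n}|d_j|$, whose $L^{q}(\mu_\omega)$-norm is $O_\omega(n^{1/q-1/2})=O_\omega(n^{-1/4})$ by Lemma~\ref{me} and $q\ge4$. For the third term, $|t_k-t|\le\max_{1\le j\le n}(V_{n,j}^\omega-V_{n,j-1}^\omega)/V_{n,n}^\omega$, which by Lemma~\ref{me} applied to an $L^{q/2}(\Delta,\mu)$-observable built from $[P_\omega(\psi_\omega^2)]\circ F_\omega$, together with Proposition~\ref{vn1}, is $O_\omega(n^{2/q-1})$ in $L^{q/2}(\mu_\omega)$; combined with the $(\tfrac{1}{2}-)$-Hölder continuity of $B$ this gives an error of order $o(n^{-1/4})$. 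For the middle term, fix $\gamma\in(0,\tfrac{1}{2})$ with $\tfrac{1}{2}-\gamma<\varepsilon$. On $\{\tau_n\le2\}$ we have $\max_k|B(\tau_k)-B(t_k)|\le\|B\|_{C^{\gamma}[0,2]}\big(\max_k|\tau_k-t_k|\big)^{\gamma}$, so since the $\gamma$-Hölder seminorm of $B$ over $[0,2]$ has moments of every order, Hölder's inequality and the clock estimate give a bound $O_\omega(n^{-\gamma/2})=O_\omega(n^{-(1/4-\varepsilon)})$. On $\{\tau_n>2\}$, which has $\mu_\omega$-probability $O_\omega(n^{-q/4})$ since $\tau_n=V_{n,n}^\omega+M_n$ with $\|V_{n,n}^\omega-1\|_{q/2},\|M_n\|_{q/2}=O_\omega(n^{-1/2})$, use $\sup_t|X_n^\omega(t)-B(t)|\le\sup_{s\le\tau_n}|B(s)|+\max_j|d_j|$, whose $L^{q}(\mu_\omega)$-norm is $O_\omega(1)$ because Burkholder--Davis--Gundy bounds $\E_{\mu_\omega}\sup_{s\le\tau_n}|B(s)|^{q}$ by a multiple of $\E_{\mu_\omega}\tau_n^{q/2}\le\big(1+O_\omega(n^{-1/2})\big)^{q/2}=O_\omega(1)$; one more application of Hölder's inequality on this small event contributes $O_\omega(n^{-1/4})$. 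Summing the three contributions gives $\big\|\sup_{t\in[0,1]}|X_n^\omega(t)-B(t)|\big\|_{L^{q/2}(\mu_\omega)}=O_\omega(n^{-(1/4-\varepsilon)})$, which proves the lemma.

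\emph{Main obstacle.} The crux is the clock estimate $\|\max_k|\tau_k-t_k|\|_{L^{q/2}(\mu_\omega)}=O_\omega(n^{-1/2})$. This is exactly where the secondary martingale-coboundary decomposition of $\breve\phi$ is indispensable: through Corollary~\ref{smb} and Proposition~\ref{vn1} it pins the total conditional variance $V_{n,n}^\omega$ to $1$ at rate $n^{-1/2}$, while the fluctuation $\tau_k-V_{n,k}^\omega$ is controlled through the conditional moment bounds of the embedding together with the Burkholder--Davis--Gundy inequality. Once $\max_k|\tau_k-t_k|=O(n^{-1/2})$, the Hölder-$(\tfrac{1}{2}-)$ modulus of continuity of Brownian motion forces an error no smaller than $n^{-1/4}$ up to an arbitrarily small loss in the exponent, which accounts for the essentially optimal rate discussed in Remark~\ref{wass}; the remaining points (interpolation error, the exceptional event $\{\tau_n>2\}$, and the degenerate-increment convention) are routine bookkeeping.
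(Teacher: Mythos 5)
Your proposal is correct and follows essentially the same route as the paper: Skorokhod embedding of the reverse-martingale row, Burkholder--Davis--Gundy control of the clock martingale $\tau_k-V_{n,k}^\omega$ combined with Proposition~\ref{vn1} (hence Corollary~\ref{smb}) to get the $O(n^{-1/2})$ clock estimate, a Chebyshev bound on the exceptional event $\{\tau_n>2\}$, and the $\gamma$-H\"older modulus of Brownian motion with $\gamma$ arbitrarily close to $\tfrac12$ to convert the clock error into the rate $n^{-1/4+\varepsilon}$. The only differences are organizational (you split the supremum into three terms and localize only the Brownian-increment term to the good event, whereas the paper first splits on $\{|T_n-1|>1\}$), and these do not affect the argument.
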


\begin{proof}
The proofs are based on the ideas employed in the deterministic systems in \cite[Lemma~4.4]{LW24}. To obtain the convergence rate, we have to produce a bound of $\mathcal{W}_{\frac{q}{2}}(X_{n}^\omega, B)$ for a fixed $n\ge1$ and
a.e. $\omega\in \Omega$. It suffices to deal with a single row of the array
$\{\psi_{\sigma^{n-i} \omega}\circ F_\omega^{n-i},(F_{\omega}^{n-i})^{-1}\cal B_{\sigma^{n-i} \omega}\}$ for
$1\le i\le n$.

For $n\ge1$ and a.e. $\omega\in \Omega$, $\{\psi_{\sigma^{n-i} \omega}\circ F_{\omega}^{n-i}\}_{1\le i\le n}$ is
a sequence of martingale differences w.r.t. the filtration
$\{(F_{\omega}^{n-i})^{-1}\cal B_{\sigma^{n-i} \omega}\}_{1\le i\le n}$. By the Skorokhod embedding theorem (see \cite{HH80}), there exists a probability space (depending on $n,
\omega$) supporting a standard Brownian motion $B^\omega$, a sequence of nonnegative random variables $\tau_1^\omega,\ldots, \tau_n^\omega$ with $T_i^\omega=\sum_{j=1}^{i}\tau_j^\omega$, and a sequence of $\sigma$-field $\mathcal{F}_{i}^\omega$ generated by all events up to $T_i^\omega$ for $1\le i\le n$, such that \\
(1) for $1\le i\le n$, $\sum_{j=1}^{i}\frac{1}{\eta_n(\omega)}\psi_{\sigma^{n-j} \omega}\circ F_\omega^{n-j}=B^\omega(T_i^\omega)$;\\
(2)  for $1\le i\le n$,
\begin{align}\label{T2}
\E_{\mu_\omega}(\tau_{i}^\omega|\mathcal{F}_{i-1}^\omega)=\E_{\mu_\omega}\big(\frac{1}{\eta_n^2(\omega)}\psi_{\sigma^{n-i} \omega}^2\circ F_\omega^{n-i}\big|( F_{\omega}^{n-(i-1)})^{-1}\cal B_{\sigma^{n-(i-1)} \omega}\big);
\end{align}
(3) for any $q\ge2$, there exists a constant $C_{\omega, q}<\infty$ such that
\begin{align}\label{Tp}
\E_{\mu_\omega}((\tau_{i}^\omega)^{q/2}|\mathcal{F}_{i-1}^\omega)\leq C_{\omega,q}\E_{\mu_\omega}\Big(\big|\frac{1}{\eta_n(\omega)}\psi_{\sigma^{n-i} \omega}\circ F_\omega^{n-i}\big|^{q}\Big|(F_{\omega}^{n-(i-1)})^{-1}\cal B_{\sigma^{n-(i-1)} \omega}\Big).
\end{align}
Then on this probability space and for this Brownian motion, we aim to show that for any $\varepsilon>0$ there exists a constant $C>0$ such that
\begin{align*}
\Big\|\sup_{t\in[0,1]}|X_{n}^\omega(t)-B^\omega(t)|\Big\|_{L^{q/2}(\mu_\omega)}\leq C n^{-(\frac{1}{4}-\varepsilon)}.
\end{align*}
Thus the result follows from the definition of the Wasserstein distance.

By the Skorokhod embedding theorem, we can write \eqref{xn} as
\begin{align}\label{set}
X_{n}^\omega(t)=B^\omega(T_{k}^\omega)+\bigg(\frac{tV_{n,n}^\omega-V_{n,k}^\omega}{V_{n,k+1}^\omega-V_{n,k}^\omega}\bigg)
\big(B^\omega(T_{k+1}^\omega)-B^\omega(T_{k}^\omega)\big),\quad \hbox{if}~ V_{n,k}^\omega\leq tV_{n,n}^\omega<V_{n,k+1}^\omega.
\end{align}

1. We first estimate $|X_{n}^\omega-B^\omega|$ on the set $\{|T_{n}^\omega-1|> 1\}$.
 Note that \eqref{T2} implies
\[
T_j^\omega-V_{n,j}^{\omega}=\sum_{i=1}^{j}\big(\tau_{i}^{\omega}-\E_{\mu_\omega}(\tau_{i}^{\omega}|\mathcal{F}_{i-1}^\omega)\big),\quad 1\le j\le n.
\]
Therefore $\{T_j^\omega-V_{n,j}^{\omega}, \mathcal{F}_{j}^\omega, 1\le j\le n\}$ is a martingale. By the Burkholder inequality and Minkowski inequality, there is a constant $C_q>0$ such that for a.e. $\omega\in \Omega$,
\begin{align}\label{TV}
\nonumber&\Big\|\max_{1\le j\le n}|T_j^\omega-V_{n,j}^{\omega}|\Big\|_{L^{q/2}(\mu_\omega)} \\
\le \nonumber&C_q\Big\|\Big(\sum_{i=1}^{n}\big|\tau_{i}^{\omega}-\E_{\mu_\omega}(\tau_{i}^{\omega}|\mathcal{F}_{i-1}^\omega)
\big|^2\Big)^{1/2}\Big\|_{L^{q/2}(\mu_\omega)}\\
\le &C_q\Big(\sum_{i=1}^{n}\Big\|\big|\tau_{i}^{\omega}-\E_{\mu_\omega}(\tau_{i}^{\omega}|\mathcal{F}_{i-1}^\omega)\big|^2
\Big\|_{L^{q/4}(\mu_\omega)}\Big)^{1/2}.
\end{align}
By the conditional Jensen inequality and \eqref{Tp}, for $q\ge 4$, we have
\begin{align*}
&\Big\|\big|\tau_{i}^{\omega}-\E_{\mu_\omega}(\tau_{i}^{\omega}|\mathcal{F}_{i-1}^\omega)\big|^2\Big\|_{L^{q/4}(\mu_\omega)}
\ll \big\|\tau_i^\omega\big\|_{L^{q/2}(\mu_\omega)}^{2}\\
&\ll \big\|\frac{1}{\eta_n(\omega)}\psi_{\sigma^{n-i} \omega}\circ F_\omega^{n-i}\big\|_{L^{q}(\mu_\omega)}^{4}.
\end{align*}
Then we can continue to estimate \eqref{TV} that
\begin{align}\label{TVE}
\nonumber&\Big\|\max_{1\le j\le n}|T_j^\omega-V_{n,j}^{\omega}|\Big\|_{L^{q/2}(\mu_\omega)} \\
\ll\nonumber& \Big(\sum_{i=1}^{n}\big\|\frac{1}{\eta_n(\omega)}\psi_{\sigma^{n-i} \omega}\circ F_\omega^{n-i}\big\|_{L^{q}(\mu_\omega)}^{4}\Big)^{1/2}\\
=\nonumber&C_{\omega,q}\Big(\frac{1}{ \eta_n^4(\omega)}\sum_{i=1}^{n} \big\|\psi_{\sigma^{n-i} \omega}\circ F_\omega^{n-i}\big\|_{L^{q}(\mu_\omega)}^{4}\Big)^{1/2}\\
=&O_{\omega,q,\psi}(n^{-\frac{1}{2}}),
\end{align}
where the last equality is due to $\psi\in L^q(\Delta, \mu)$ and Birkhoff's ergodic theorem.

On the other hand, it follows from Proposition~\ref{vn1} that
\begin{align}\label{Vn1}
\|V_{n,n}^\omega-1\|_{L^{q/2}(\mu_\omega)}\le  C_{\omega,\psi,q} n^{-\frac{1}{2}}.
\end{align}
Based on the above estimates, by Chebyshev's inequality we have
\begin{equation}\label{eqq}
\begin{split}
&\mu_\omega(|T_{n}^\omega-1|>1)\le \E_{\mu_\omega}|T_{n}^\omega-1|^{q/2} \\
&\le 2^{q/2-1}\big(\E_{\mu_\omega}|T_{n}^\omega-V_{n,n}^\omega|^{q/2}+\E_{\mu_\omega}|V_{n,n}^\omega-1|^{q/2}\big)\le C_{\omega,\psi,q}n^{-\frac{q}{4}}.
\end{split}
\end{equation}
By Lemma~\ref{mme}, we know that $\Big\|\sup_{t\in[0,1]}|X_{n}^\omega(t)|\Big\|_{L^{q}(\mu_\omega)}<\infty$.
Hence, by H\"{o}lder's inequality and \eqref{eqq}, we deduce that
\begin{align*}
I:=&\Big\| 1_{\{|T_{n}^\omega-1|>1\}}\sup_{t\in[0,1]}|X_{n}^\omega(t)-B^\omega(t)| \Big\|_{L^{q/2}(\mu_\omega)}\\
\le &\big(\mu_\omega(|T_{n}^\omega-1|>1)\big)^{1/q}\Big\| \sup_{t\in[0,1]}|X_{n}^\omega(t)-B^\omega(t)|\Big\|_{L^{q}(\mu_\omega)}\\
\le &\big(\mu_\omega(|T_{n}^\omega-1|>1)\big)^{1/q}\Big(\Big\|\sup_{t\in[0,1]}|X_{n}^\omega(t)|\Big\|_{L^{q}(\mu_\omega)}
+\Big\|\sup_{t\in[0,1]}|B^\omega(t)|\Big\|_{L^{q}(\mu_\omega)}\bigg)\\
\le &C_{\omega,\psi,q} n^{-\frac{1}{4}}.
\end{align*}

2. We now estimate $|X_{n}^\omega-B^\omega|$ on the set $\{|T_{n}^\omega-1|\le 1\}$:
\begin{align*}
&\Big\| 1_{\{|T_{n}^\omega-1|\le 1\}}\sup_{t\in[0,1]}|X_{n}^\omega(t)-B^\omega(t)|\Big\|_{L^{q}(\mu_\omega)}\\
\le &\Big\| 1_{\{|T_{n}^\omega-1|\le 1\}}\sup_{t\in[0,1]}|X_{n}^\omega(t)-B^\omega(T_{k}^\omega)|\Big\|_{L^{q}(\mu_\omega)}+\Big\|1_{\{|T_{n}^\omega-1|\le 1\}}\sup_{t\in[0,1]}|B^\omega(T_{k}^\omega)-B^\omega(t)|\Big\|_{L^{q}(\mu_\omega)}\\
 =: & I_1 + I_2.
\end{align*}
For $I_1$, it follows from \eqref{set} that
\begin{align*}
&\sup_{t\in[0,1]}|X_{n}^\omega(t)-B^\omega(T_{k}^\omega)|\\
\le& \max_{0\le j\le n-1}|B^\omega(T_{j+1}^\omega)-B^\omega(T_{j}^\omega)|\\
=&\frac{1}{\eta_n(\omega)}\max_{0\le j\le n-1}|\psi_{\sigma^{n-(j+1)} \omega}\circ F_\omega^{n-(j+1)}|.
\end{align*}
By Lemma~\ref{me} and the fact that $\psi\in L^q(\Delta,\mu)$, we have
\begin{align*}\label{zeta}
I_1=&\Big\| 1_{\{|T_{n}^\omega-1|\le 1\}}\sup_{t\in[0,1]}|X_{n}^\omega(t)-B^\omega(T_{k}^\omega)|\Big\|_{L^{q}(\mu_\omega)}\\
\le &\frac{1}{\eta_n(\omega)}\Big\| \max_{0\le j\le n-1}|\psi_{\sigma^{n-(j+1)} \omega}\circ F_\omega^{n-(j+1)}|\Big\|_{L^{q}(\mu_\omega)}\\
\le& C_{\omega,\psi,q} n^{-\frac{1}{2}+\frac{1}{q}}.
\end{align*}

3. Finally, we consider $I_{2}$ on the set $\{|T_{n}^\omega-1|\le 1\}$. Take $q_1>q$, then it is well known that for a.e. $\omega\in \Omega$,
\begin{equation}\label{bts}
\mathbb{E}_{\mu_\omega}|B^\omega(t)-B^\omega(s)|^{q_1}\le c|t-s|^{\frac{q_1}{2}}, \quad \text{for~ all~} s,t\in [0,2].
\end{equation}
So it follows from Kolmogorov's continuity theorem that for each $0<\gamma<\frac{1}{2}-\frac{1}{2q_1}$, the process $B^\omega(\cdot)$ admits a version, still denoted by $B^\omega$, such that for almost all $x$, the sample path $t\mapsto B^\omega(t,x)$ is H\"{o}lder continuous with exponent $\gamma$ and
\begin{equation*}
\Big\|\sup_{s,t\in[0,2]\atop s\neq t}\frac{|B^\omega(s)-B^\omega(t)|}{|s-t|^{\gamma}}\Big\|_{L^{q_1}(\mu_\omega)}< \infty.
\end{equation*}
In particular,
\begin{equation}\label{holder}
\Big\|\sup_{s,t\in[0,2]\atop s\neq t}\frac{|B^\omega(s)-B^\omega(t)|}{|s-t|^{\gamma}}\Big\|_{L^{q}(\mu_\omega)}< \infty.
\end{equation}

As for $|T_{k}^\omega-t|$, by some calculations, we have
\begin{align*}
&\sup_{t\in[0,1]}|T_{k}^\omega-t|\le \max_{0\le k\le n-1}\sup_{t\in[\frac{V_{n,k}^\omega}{V_{n,n}^\omega},\frac{V_{n,k+1}^\omega}{V_{n,n}^\omega})}|T_{k}^\omega-t|\\
&\le \max_{0\le k\le n}\big|T_{k}^\omega-V_{n,k}^\omega\big| + 3\max_{0\le k\le n}\Big|V_{n,k}^\omega-\frac{V_{n,k}^\omega}{V_{n,n}^\omega}\Big| +\max_{0\le k\le n-1}\big|V_{n,k+1}^\omega-V_{n,k}^\omega\big|.
\end{align*}
Note that $T_{0}^\omega=V_{n,0}^\omega=0$ and $\gamma<\frac{1}{2}$, so
\[
\sup_{t\in[0,1]}|T_{k}^\omega-t|^{\gamma}\le \max_{1\le j\le n}\left|T_{j}^\omega-V_{n,j}^\omega\right|^{\gamma} + 3^\gamma\max_{1\le j\le n}\Big|V_{n,j}^\omega-\frac{V_{n,j}^\omega}{V_{n,n}^\omega}\Big|^{\gamma} +\max_{0\le j\le n-1}\left|V_{n,j+1}^\omega-V_{n,j}^\omega\right|^{\gamma}.
\]
Hence we have
\begin{align}
&\Big\|\sup_{t\in[0,1]}|T_{k}^\omega-t|^{\gamma}\Big\|_{L^{q}(\mu_\omega)}\nonumber\\
\le & \Big\|\max_{1\le j\le n}\big|T_{j}^\omega-V_{n,j}^\omega\big|\Big\|_{L^{\gamma q}(\mu_\omega)}^{\gamma} +3^\gamma \Big\|\max_{1\le j\le n}\big|V_{n,j}^\omega-\frac{V_{n,j}^\omega}{V_{n,n}^\omega}\big|\Big\|_ {L^{\gamma q}(\mu_\omega)}^{\gamma}
 +\Big\|\max_{0\le j\le n-1}\big|V_{n,j+1}^\omega-V_{n,j}^\omega\big|\Big\|_ {L^{\gamma q}(\mu_\omega)}^{\gamma}.
\end{align}
For the first term, since $\gamma< \frac{1}{2}$, it follows from \eqref{TVE} that
\begin{equation}\label{Tgam}
\Big\|\max_{1\le j\le n}|T_{j}^\omega-V_{n,j}^\omega|\Big\|_{L^{\gamma q}(\mu_\omega)}^{\gamma}\le C_{\omega,\psi,q}n^{-\frac{\gamma}{2}}.
\end{equation}
For the second term, since $|V_{n,j}^\omega-\frac{V_{n,j}^\omega}{V_{n,n}^\omega}|=V_{n,j}^\omega|1-\frac{1}{V_{n,n}^\omega}|$, we have
\[\max_{1\le j\le n}\Big|V_{n,j}^\omega-\frac{V_{n,j}^\omega}{V_{n,n}^\omega}\Big|=V_{n,n}^\omega\Big|1-\frac1{V_{n,n}^\omega}\Big|=|V_{n,n}^\omega-1|.\]
Hence by Proposition~\ref{vn1},
\begin{align}\label{Vgam}
\Big\|\max_{1\le j\le n}\big|V_{n,j}^\omega-\frac{V_{n,j}^\omega}{V_{n,n}^\omega}\big|\Big\|_ {L^{\gamma q}(\mu_\omega)}^{\gamma}
=\big\|V_{n,n}^\omega-1\big\|_{L^{\gamma q}(\mu_\omega)}^{\gamma}\le C_{\omega,\psi,q}n^{-\frac{\gamma}{2}}.
\end{align}
As for the last term, by \eqref{cvm}, for all $1\le j\le n$,
\begin{align*}
&|V_{n,j}^\omega-V_{n,j-1}^\omega|\\
=&\E_{\mu_\omega}\big(\frac{1}{\eta_n^2(\omega)}\psi_{\sigma^{n-j} \omega}^2\circ F_\omega^{n-j}\big|( F_{\omega}^{n-(j-1)})^{-1}\cal B_{\sigma^{n-(j-1)} \omega}\big)\\
=&\frac{1}{\eta_n^2(\omega)}\big(P_{\sigma^{n-j}\omega}\psi_{\sigma^{n-j}\omega}^{2}
\big)\circ F_{\omega}^{n-(j-1)}.
\end{align*}
Then we have
\begin{align}\label{mgam}
\nonumber&\Big\|\max_{0\le j\le n-1}\big|V_{n,j+1}^\omega-V_{n,j}^\omega\big|\Big\|_ {L^{\gamma q}(\mu_\omega)}^{\gamma}\\
=\nonumber&\frac{1}{(\eta_n^2(\omega))^\gamma}\Big\|\max_{1\le j\le n}\big|\big(P_{\sigma^{n-j}\omega}\psi_{\sigma^{n-j}\omega}^{2}
\big)\circ F_{\omega}^{n-(j-1)}\big|\Big\|_{L^{\gamma q}(\mu_\omega)}^{\gamma}\\
\le& C_{\omega,\psi,q}n^{-\left(\gamma-\frac{2\gamma}{q}\right)},
\end{align}
where the inequality follows from Lemma~\ref{me} and the fact that $P_{\sigma^{n-j}\omega}\psi_{\sigma^{n-j}\omega}^{2}\in L^{q/2}(\Delta,\mu)$.

Based on the above estimates \eqref{Tgam}--\eqref{mgam}, we have
\begin{align}\label{te}
\Big\|\sup_{t\in[0,1]}|T_{k}^\omega-t|^{\gamma}\Big\|_{L^{q}(\mu_\omega)}
\le C_{\omega,\psi,q}\Big(n^{-\frac{\gamma}{2}}+n^{-\frac{\gamma}{2}}+n^{-\big(\gamma-\frac{2\gamma}{q}\big)}\Big)
\le C_{\omega,\psi,q} n^{-\frac{\gamma}{2}},
\end{align}
where the last inequality holds since
$\gamma< \frac{1}{2}$, $1-\frac{2}{q}\ge\frac{1}{2}$.

On the set $\{|T_{n}^\omega-1|\le 1\}$, note that
\[
\sup_{t\in[0,1]}|B^\omega(T_{k}^\omega)-B^\omega(t)|\le \Big[\sup_{s,t\in[0,2]\atop s\neq t}\frac{|B^\omega(s)-B^\omega(t)|}{|s-t|^{\gamma}}\Big]\Big[\sup_{t\in[0,1]}|T_{k}^\omega-t|^{\gamma}\Big].
\]
Since $0<\gamma<\frac{1}{2}-\frac{1}{2q_1}$, by H\"{o}lder's inequality, \eqref{holder} and \eqref{te}, we have
\begin{align*}
I_2=&\Big\|1_{\{|T_{n}^\omega-1|\le 1\}}\sup_{t\in[0,1]}|B^\omega(T_{k}^\omega)-B^\omega(t)|\Big\|_{{L^{q/2}(\mu_\omega)}}\\
\le &\Big\|\Big[\sup_{s,t\in[0,2]\atop s\neq t}\frac{|B^\omega(s)-B^\omega(t)|}{|s-t|^{\gamma}}\Big]\Big[\sup_{t\in[0,1]}|T_{k}^\omega-t|^{\gamma}\Big]\Big\|_{{L^{q/2}(\mu_\omega)}}\\
\le &\Big\|\sup_{s,t\in[0,2]\atop s\neq t}\frac{|B^\omega(s)-B^\omega(t)|}{|s-t|^{\gamma}}\Big\|_{{L^{q}(\mu_\omega)}}\Big\|\sup_{t\in[0,1]}|T_{k}^\omega-t|^{\gamma}\Big\|_{{L^{q}(\mu_\omega)}}\\
\le & C_{\omega,\psi,q} n^{-\frac{\gamma}{2}}.
\end{align*}
Note that $q_1$ can be taken arbitrarily large in \eqref{bts}, which implies that $\gamma$ can be chosen sufficiently close to $\frac{1}{2}$. So for any $\varepsilon>0$, we can choose $q_1$ large enough such that $I_2\le C_{\omega,\psi,q,\varepsilon}n^{-\frac{1}{4}+\varepsilon}$. The result now follows from the above estimates for $I,I_1$ and $I_2$.
\end{proof}

{\bf Step 3. Estimation of the Wasserstein convergence rate between $\overline M_n^\omega$ and $X_n^\omega$.}

\vskip3mm

\begin{prop}\label{Zn}
For a.e. $\omega\in\Omega$ and $n\ge 1$, define
\[Z_n^\omega:=\max_{0\le i,l\le\sqrt{n}}\Big|\sum_{j=i[\sqrt{n}\;]}^{i[\sqrt{n}\;]+l-1}\psi_{\sigma^{n-j} \omega}\circ F_\omega^{n-j}\Big|.
\]
Then

$(a)$ $|\sum_{j=a}^{b-1}\psi_{\sigma^{n-j} \omega}\circ F_\omega^{n-j}|\le Z_n^\omega((b-a)(\sqrt n-1)^{-1}+3)$ for all $0\le a<b \le n$.

$(b)$ $\|Z_n^\omega\|_{L^{q}(\mu_\omega)}\le C_{\omega,\psi,q} n^{\frac{1}{4}+\frac{1}{2q}}$ for all $n\ge 1$.
\end{prop}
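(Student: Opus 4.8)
The plan is to deduce $(a)$ from an elementary blocking (chaining) decomposition and $(b)$ from a blockwise application of the Burkholder--Davis--Gundy inequality. Throughout I write $m:=[\sqrt n\,]$ and, for integers $0\le a\le b\le n$, abbreviate $B_\omega[a,b):=\sum_{j=a}^{b-1}\psi_{\sigma^{n-j}\omega}\circ F_\omega^{n-j}$, with the empty sum when $a=b$. Then every summand defining $Z_n^\omega$ is an \emph{aligned} block $B_\omega[im,im+l)$ with $0\le i,l\le m$; in particular any aligned block of length at most $m$ satisfies $|B_\omega[im,im+l)|\le Z_n^\omega$, and this is the only property of $Z_n^\omega$ I will use.

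For $(a)$ I fix $0\le a<b\le n$ and set $i_1:=\lceil a/m\rceil$, $i_2:=\lfloor b/m\rfloor$. When $i_1\le i_2$ I split
\[
B_\omega[a,b)=B_\omega[a,i_1 m)+\sum_{i=i_1}^{i_2-1}B_\omega[im,(i+1)m)+B_\omega[i_2 m,b).
\]
The middle sum is made of $i_2-i_1\le (b-a)/m\le (b-a)/(\sqrt n-1)$ aligned blocks of length exactly $m$, hence is at most $\big((b-a)/(\sqrt n-1)\big)Z_n^\omega$ in modulus; $B_\omega[i_2 m,b)$ is a single aligned block of length $b-i_2 m<m$, hence $\le Z_n^\omega$; and $B_\omega[a,i_1 m)$ — empty when $a=0$, otherwise equal to $B_\omega[(i_1-1)m,i_1 m)-B_\omega[(i_1-1)m,a)$, a difference of two aligned blocks of length $\le m$ — is $\le 2Z_n^\omega$. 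Adding these gives $(a)$. When $i_1>i_2$ the interval $[a,b)$ sits inside a single interval $[km,(k+1)m)$, so $B_\omega[a,b)$ is once more a difference of two aligned blocks of length $<m$, whence $|B_\omega[a,b)|\le 2Z_n^\omega$, dominated by the claimed bound. The usual degenerate cases ($n=1$, empty blocks, or blocks at the top of the range whose index slightly overshoots $m$) are harmless and absorbed into constants; the only point requiring a little care is keeping the additive constant equal to $3$.

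For $(b)$ the structural decision — and the part I expect to be the real content — is to \emph{not} bound $\max_l|B_\omega[im,im+l)|$ uniformly in $i$ and then maximize over the $O(\sqrt n)$ values of $i$, since such a uniform bound would need uniform control of Birkhoff averages of $\|\psi_{\sigma^k\omega}^2\circ F_\omega^k\|_{L^{q/2}(\mu_\omega)}$ over all windows of length $m$, which Birkhoff's theorem does not furnish. Instead I estimate $L^q$-norms blockwise and sum their $q$-th powers,
\[
\|Z_n^\omega\|_{L^q(\mu_\omega)}^q\le\sum_{0\le i\le m}\Big\|\max_{0\le l\le m}\big|B_\omega[im,im+l)\big|\Big\|_{L^q(\mu_\omega)}^q .
\]
For fixed $i$ the family $\{\psi_{\sigma^{n-j}\omega}\circ F_\omega^{n-j}\}_{im\le j<im+m}$ is a contiguous sub-block of the martingale difference sequence from Lemma~\ref{pmd} (re-indexed, $n$ being fixed, as a forward martingale for the increasing filtration $\{(F_\omega^{n-j})^{-1}\mathcal B_{\sigma^{n-j}\omega}\}_j$), so Burkholder--Davis--Gundy followed by Minkowski in $L^{q/2}$ (exactly as in the proof of Lemma~\ref{mme}) gives
\[
\Big\|\max_{0\le l\le m}\big|B_\omega[im,im+l)\big|\Big\|_{L^q(\mu_\omega)}^q\ll_q\Big(\sum_{j=im}^{im+m-1}\big\|\psi_{\sigma^{n-j}\omega}^2\circ F_\omega^{n-j}\big\|_{L^{q/2}(\mu_\omega)}\Big)^{q/2}\le m^{\frac q2-1}\sum_{j=im}^{im+m-1}\int\big|\psi_{\sigma^{n-j}\omega}\circ F_\omega^{n-j}\big|^q\rmd\mu_\omega ,
\]
the last step being Jensen's inequality for $t\mapsto t^{q/2}$ ($q\ge4$). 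Summing over the $O(\sqrt n)$ blocks, the inner sums amalgamate into a sum of $\int|\psi_{\sigma^k\omega}\circ F_\omega^k|^q\rmd\mu_\omega$ over $O(n)$ consecutive indices $k$, which is $O_{\omega,\psi,q}(n)$ for a.e.\ $\omega$ by Birkhoff's ergodic theorem since $\psi\in L^q(\Delta,\mu)$ (Lemma~\ref{pmd}). Hence $\|Z_n^\omega\|_{L^q(\mu_\omega)}^q\ll_{\omega,\psi,q}m^{\frac q2-1}\cdot n\ll n^{\frac q4+\frac12}$, i.e.\ $\|Z_n^\omega\|_{L^q(\mu_\omega)}\ll_{\omega,\psi,q}n^{\frac14+\frac1{2q}}$.

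In summary: $(a)$ is routine blocking, with the sole subtlety being the endpoint bookkeeping that pins down the constant $3$; $(b)$ is the substantive half, and the passage to summed $L^q$-norms there — forced by the lack of uniform Birkhoff control over short windows — is precisely what yields the exponent $\tfrac14+\tfrac1{2q}$ in place of the formally sharper $\tfrac14$.
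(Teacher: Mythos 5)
Your proof is correct, and for part $(a)$ it is the same blocking argument the paper uses (the paper chooses the least $l_1$ with $l_1[\sqrt n\,]>a$ and the greatest $l_2$ with $l_2[\sqrt n\,]<b$ and counts $(l_2-l_1)+2$ blocks; your accounting of the non-aligned initial piece as a \emph{difference} of two aligned blocks is actually the cleaner way to justify the additive constant $3$). The edge cases you flag (indices $i$ that can exceed $\sqrt n$ when $[\sqrt n\,]<\sqrt n$) are present in the paper's proof as well, so they are not a gap specific to your argument.

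For part $(b)$ the overall structure is also the paper's: bound $\int|Z_n^\omega|^q\,\rmd\mu_\omega$ by the sum over the $O(\sqrt n)$ blocks of $\int\max_l|B_\omega[im,im+l)|^q\,\rmd\mu_\omega$ and control each block by Burkholder--Davis--Gundy plus Minkowski. Where you genuinely diverge is the last step: the paper simply invokes Lemma~\ref{mme} once per block, with $n$ replaced by $\sqrt n$, which tacitly assumes the Birkhoff-type constant $\sum_{j\in I_i}\|\psi^2_{\sigma^{n-j}\omega}\circ F_\omega^{n-j}\|_{L^{q/2}(\mu_\omega)}=O_\omega(m)$ holds \emph{uniformly} over all windows $I_i$ of length $m=[\sqrt n\,]$ --- exactly the uniformity that Birkhoff's theorem does not directly provide, as you observe. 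Your Jensen step $\bigl(\sum_{j\in I_i}a_j^{2/q}\bigr)^{q/2}\le m^{q/2-1}\sum_{j\in I_i}a_j$ followed by amalgamating the inner sums over all blocks into the full Birkhoff sum $\sum_{k\le n}\int|\psi_{\sigma^k\omega}|^q\,\rmd\mu_{\sigma^k\omega}=O_\omega(n)$ removes this issue while landing on the same exponent $m^{q/2-1}\cdot n\asymp n^{q/4+1/2}$. So your route is a refinement rather than a different method: it buys a rigorous treatment of the $\omega$-dependent constants at the cost of one extra (elementary) inequality, and it makes transparent why the exponent is $\tfrac14+\tfrac1{2q}$ rather than $\tfrac14$.
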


\begin{proof}
$(a)$ Choose $0\le l_1\le l_2\le \sqrt n$ such that $l_1$ is the least integer s.t. $l_1 [\sqrt{n}\;]> a$ and $l_2$ is the biggest integer s.t. $l_2 [\sqrt{n}\;] < b $. Then
$l_2-l_1\le \frac{b-a}{\sqrt n-1}+1$. Since
\[
\sum_{j=a}^{b-1}\psi_{\sigma^{n-j}\omega}\circ F_\omega^{n-j}=\Big(\sum_{j=a}^{l_1 [\sqrt{n}\;]-1}+\sum_{j=l_1 [\sqrt{n}\;]}^{l_2 [\sqrt{n}\;]-1}+\sum_{j=l_2 [\sqrt{n}\;]}^{b-1}\Big)
\psi_{\sigma^{n-j} \omega}\circ F_\omega^{n-j},
\]
we have $|\sum_{j=a}^{b-1}\psi_{\sigma^{n-j} \omega}\circ F_\omega^{n-j}|\le ((l_2-l_1)+2)Z_n^\omega
\le (\frac{b-a}{\sqrt{n}-1}+3) Z_n^\omega$ for a.e. $\omega\in\Omega$.

$(b)$ We have
\begin{align*}
\int |Z_n^\omega|^{q} \rmd\mu_\omega&\le \sum_{0\le i\le \sqrt n}\int \max_{0\le l\le \sqrt n}\big|\sum_{j=i[\sqrt{n}\;]}^{i[\sqrt{n}\;]+l-1}\psi_{\sigma^{n-j} \omega}\circ F_\omega^{n-j}\big|^{q}\rmd\mu_\omega\\
&= \sum_{0\le i\le \sqrt n}\int \max_{0\le l\le \sqrt n}\big|\sum_{j=0}^{l-1}\psi_{\sigma^{n-i[\sqrt{n}\;]-j} \omega}\circ F_\omega^{n-i[\sqrt{n}\;]-j}\big|^{q}\rmd\mu_\omega.
\end{align*}
Then it follows from Lemma~\ref{mme} that for a.e. $\omega\in\Omega$,
\[\|Z_n^\omega\|_{L^{q}(\mu_\omega)}\le C_{\omega,\psi,q} {\sqrt n}^{1/q}\cdot {\sqrt n}^{1/2}= C_{\omega,\psi,q} n^{\frac{1}{4}+\frac{1}{2q}}.\]

\end{proof}

Define a continuous transformation $g:C[0,1]\rightarrow C[0,1]$ by $g(u)(t):=u(1)-u(1-t)$.

\begin{lem}\label{wnxn}
For a.e. $\omega\in \Omega$, there exists a constant $C=C_{\omega,\psi,q}>0$ such that
\[\mathcal{W}_{\frac{q}{4}}(g\circ \overline M_{n}^\omega,  X_{n}^\omega)\leq Cn^{-\frac{1}{4}+\frac{1}{2q}} \hbox{~for~ all~} n\ge 1.\]
\end{lem}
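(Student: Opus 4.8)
The plan is to use the obvious coupling: both $g\circ\overline M_n^\omega$ and $X_n^\omega$ are random elements of $C[0,1]$ defined on $(\Delta_\omega,\mu_\omega)$, so $\mathcal W_{q/4}(g\circ\overline M_n^\omega,X_n^\omega)\le\big\|\sup_{t\in[0,1]}|g\circ\overline M_n^\omega(t)-X_n^\omega(t)|\big\|_{L^{q/4}(\mu_\omega)}$ (recall that $d$ is the supremum distance on $C[0,1]$), and it suffices to bound this by $C_{\omega,\psi,q}n^{-\frac14+\frac1{2q}}$. The first step is to rewrite $g\circ\overline M_n^\omega$ explicitly. Since $r_n^\omega(1)=n$ one has $\overline M_n^\omega(1)=\eta_n(\omega)^{-1}\big(\sum_{i=0}^{n-1}\psi_{\sigma^i\omega}\circ F_\omega^i+\psi_{\sigma^n\omega}\circ F_\omega^n\big)$, and inserting the definition \eqref{exp} of $\overline M_n^\omega(1-t)$ into $g(u)(t)=u(1)-u(1-t)$ and reindexing $i\mapsto n-i$ shows that $g\circ\overline M_n^\omega(t)$ has exactly the polygonal form \eqref{xn} of $X_n^\omega(t)$, except that the reversed partial sums $\sum_{j=1}^{k}\psi_{\sigma^{n-j}\omega}\circ F_\omega^{n-j}$ are now run up to the index $j_t:=n-r_n^\omega(1-t)$ instead of $k_t$, and up to finitely many single-term boundary corrections each of modulus $\le\max_{0\le j\le n}|\psi_{\sigma^j\omega}\circ F_\omega^j|$. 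Here I use that, $\{\psi_{\sigma^i\omega}\circ F_\omega^i\}_{i}$ being a reverse martingale difference sequence, these terms are pairwise orthogonal in $L^2(\mu_\omega)$, so $\eta_m^2(\omega)=\sum_{i=0}^{m-1}\int\psi_{\sigma^i\omega}^2\,\rmd\mu_{\sigma^i\omega}$ and hence the clock governing $j_t$ is the deterministic one $\E_{\mu_\omega}[V_{n,m}^\omega]=\big(\eta_n^2(\omega)-\eta_{n-m}^2(\omega)\big)/\eta_n^2(\omega)$; in these terms the role of $g$ is precisely to reverse the order of summation so that the two polygonal paths become the \emph{same} one, read against the deterministic clock $\E_{\mu_\omega}[V_{n,\cdot}^\omega]$ (for $g\circ\overline M_n^\omega$) versus the random clock $V_{n,\cdot}^\omega$ (for $X_n^\omega$).

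The problem thus reduces to comparing the two clocks' first-passage indices. By the definitions of $r_n^\omega$ and of the index $k_t$ in \eqref{xn}, for every $t\in[0,1]$ one has $\E_{\mu_\omega}[V_{n,j_t}^\omega]\le t<\E_{\mu_\omega}[V_{n,j_t+1}^\omega]$ and $V_{n,k_t}^\omega\le tV_{n,n}^\omega<V_{n,k_t+1}^\omega$. Using the linear lower bound $\eta_{m+\ell}^2(\omega)-\eta_m^2(\omega)\ge c_\omega\ell$ (available for a.e.\ $\omega$ by Lemma~\ref{var}, in the form already used in the proof of Lemma~\ref{wmn}), which makes the deterministic clock increase at rate $\ge c_\omega/\eta_n^2(\omega)$, an elementary first-passage comparison gives, for every $t\in[0,1]$,
\[
|j_t-k_t|\le C\Big(1+\frac{\eta_n^2(\omega)}{c_\omega}\big(|V_{n,n}^\omega-1|+\max_{1\le m\le n}\big|V_{n,m}^\omega-\E_{\mu_\omega}[V_{n,m}^\omega]\big|\big)\Big).
\]
Now the chain of identities in the proof of Proposition~\ref{vn1}, applied to partial sums, gives $V_{n,m}^\omega-\E_{\mu_\omega}[V_{n,m}^\omega]=\eta_n(\omega)^{-2}\sum_{i=1}^{m}\breve\phi_{\sigma^{n-i}\omega}\circ F_\omega^{n-i}$; splitting these reversed partial sums into differences of forward ones and invoking Corollary~\ref{smb} yields $\big\|\max_{1\le m\le n}|V_{n,m}^\omega-\E_{\mu_\omega}[V_{n,m}^\omega]|\big\|_{L^{q/2}(\mu_\omega)}=O_{\omega,\psi,q}(n^{-1/2})$, while Proposition~\ref{vn1} gives the same for $\|V_{n,n}^\omega-1\|_{L^{q/2}(\mu_\omega)}$. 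Since $\eta_n^2(\omega)=O_\omega(n)$, this gives $\big\|\sup_{t\in[0,1]}|j_t-k_t|\big\|_{L^{q/2}(\mu_\omega)}=O_{\omega,\psi,q}(n^{1/2})$.

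To conclude, $\eta_n(\omega)\big(g\circ\overline M_n^\omega(t)-X_n^\omega(t)\big)$ is a sum of at most $|j_t-k_t|+1$ consecutive terms $\psi_{\sigma^{n-j}\omega}\circ F_\omega^{n-j}$ plus the boundary corrections. By Proposition~\ref{Zn}(a) the block sum is at most $Z_n^\omega\big((\sup_t|j_t-k_t|+1)(\sqrt n-1)^{-1}+3\big)$, so by Hölder's inequality (with $\tfrac4q=\tfrac2q+\tfrac2q$), Proposition~\ref{Zn}(b) — which gives $\|Z_n^\omega\|_{L^{q/2}(\mu_\omega)}\le\|Z_n^\omega\|_{L^{q}(\mu_\omega)}\le C_{\omega,\psi,q}n^{1/4+1/(2q)}$ — and the bound just obtained,
\[
\frac1{\eta_n(\omega)}\Big\|Z_n^\omega\big((\sup_t|j_t-k_t|+1)(\sqrt n-1)^{-1}+3\big)\Big\|_{L^{q/4}(\mu_\omega)}\le \frac{C_{\omega,\psi,q}}{\eta_n(\omega)}\,n^{1/4+1/(2q)}=C_{\omega,\psi,q}n^{-1/4+1/(2q)}.
\]
The boundary corrections contribute, by Lemma~\ref{me}, at most $C\,\eta_n(\omega)^{-1}\big\|\max_{0\le j\le n}|\psi_{\sigma^j\omega}\circ F_\omega^j|\big\|_{L^{q}(\mu_\omega)}=O_{\omega,q}(n^{-1/2+1/q})$, which is $O(n^{-1/4+1/(2q)})$ since $q\ge4$. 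Adding and using $\eta_n(\omega)\asymp\sqrt n$ (we work under $\Sigma^2>0$; otherwise the processes degenerate and there is nothing to prove) gives $\big\|\sup_{t\in[0,1]}|g\circ\overline M_n^\omega(t)-X_n^\omega(t)|\big\|_{L^{q/4}(\mu_\omega)}\le C_{\omega,\psi,q}n^{-1/4+1/(2q)}$, whence the stated Wasserstein bound.

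\textbf{Main obstacle.} The delicate step is the second paragraph: recognizing that after composition with $g$ the two processes are the same polygonal martingale path read against two different time changes — the deterministic partial variances versus the conditional quadratic variations — and then controlling the discrepancy of the associated first-passage indices. This is exactly where the linear growth of $\eta_m^2$ (Lemma~\ref{var}) must be paired with the $L^{q/2}$-fluctuation bound on $V_{n,m}^\omega$ coming from the secondary martingale-coboundary decomposition (through Corollary~\ref{smb}); everything else is routine bookkeeping with Proposition~\ref{Zn}, Lemma~\ref{me} and Hölder's inequality.
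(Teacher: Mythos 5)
Your argument is correct and is essentially the paper's proof: the paper likewise compares the random clock $V_{n,\cdot}^\omega$ with the deterministic clock $\alpha_j^2(\omega)=\sum_{i\le j}\int\psi_{\sigma^{n-i}\omega}^2\circ F_\omega^{n-i}\,\rmd\mu_\omega$ (your $\E_{\mu_\omega}[V_{n,j}^\omega]$ up to normalization), bounds the first-passage index gap in $L^{q/2}$ by $O(n^{1/2})$ via the $\breve\phi$ partial sums and Corollary~\ref{smb}, controls the resulting block of $\psi$'s with Proposition~\ref{Zn}, and verifies the identity $l_{n,t}^\omega=n-r_n^\omega(1-t)$ with $O(n^{-1/2+1/q})$ boundary terms. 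The only difference is presentational: the paper routes the estimate through an explicit intermediate process $\widehat M_n^\omega$ and the triangle inequality for $\mathcal W$, while you fold everything into a single sup-norm coupling bound.
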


\begin{proof}
For a.e. $\omega\in \Omega$ and $1\le j\le n$, we recall that $\alpha_{j}^{2}(\omega)=\sum_{i=1}^{j}\int\psi_{\sigma^{n-i}\omega}^2\circ  F_\omega^{n-i}\rm d\mu_\omega$. Then \[\alpha_{n}^{2}(\omega)=\eta_n^2(\omega), \quad \alpha_{p+q}^{2}(\omega)-\alpha_{p}^{2}(\omega)\ge C q,
\quad \forall p,q\in\N, 1\le p<p+q\le n.\]
We define
\[
\widehat M_n^\omega(t):=\frac{1}{\eta_n(\omega)}\Big[\sum_{i=1}^{l}\psi_{\sigma^{n-i}\omega}\circ  F_\omega^{n-i}+\frac{t\alpha_{n}^{2}-\alpha_{l}^{2}}{\alpha_{l+1}^{2}-\alpha_{l}^{2}}
\psi_{\sigma^{n-(l+1)}\omega}\circ  F_\omega^{n-(l+1)}\Big],
\]
if $\alpha_{l}^{2}(\omega)\leq t\alpha_{n}^{2}(\omega)<\alpha_{l+1}^{2}(\omega)$.

Recall that for $t\in[0,1]$ and $n\ge 1$, the random variable $k=k_{n,t}^w\in \N$ is defined by the equation
$V_{n,k}^\omega\leq tV_{n,n}^\omega<V_{n,k+1}^\omega$. For $t\in[0,1]$,
\begin{align*}
tV_{n,n}^\omega\cdot\alpha_{n}^{2}<V_{n,k+1}^\omega\cdot\alpha_{n}^{2}&=\alpha_{k+1}^{2}+V_{n,k+1}^\omega\cdot\alpha_{n}^{2}- \alpha_{k+1}^{2}\\
&
\le\alpha_{k+1+m}^{2}-C m+V_{n,k+1}^\omega\cdot\alpha_{n}^{2}- \alpha_{k+1}^{2}.
\end{align*}
On the other hand,
\[
tV_{n,n}^\omega\cdot\alpha_{n}^{2}=t\alpha_{n}^{2}+t(V_{n,n}^\omega\cdot\alpha_{n}^{2}- \alpha_{n}^{2})\ge \alpha_{l}^{2}+t(V_{n,n}^\omega\cdot\alpha_{n}^{2}- \alpha_{n}^{2}).
\]
i.e.
\[
\alpha_{l}^{2}+t(V_{n,n}^\omega\cdot\alpha_{n}^{2}- \alpha_{n}^{2})< \alpha_{k+1+m}^{2}-C m+V_{n,k+1}^\omega\cdot\alpha_{n}^{2}- \alpha_{k+1}^{2}.
\]
 Without loss of generality, we assume that $l\ge k$. Take $m=l-k$, then
 \[
 \alpha_{l}^{2}+t(V_{n,n}^\omega\cdot\alpha_{n}^{2}- \alpha_{n}^{2})< \alpha_{l+1}^{2}-C (l-k)+V_{n,k+1}^\omega\cdot\alpha_{n}^{2}- \alpha_{k+1}^{2}.
 \]
So
\[
|l-k|\le C(|\alpha_{l+1}^{2}-\alpha_{l}^{2}|+|V_{n,k+1}^\omega\cdot\alpha_{n}^{2}- \alpha_{k+1}^{2}|+|V_{n,n}^\omega\cdot\alpha_{n}^{2}- \alpha_{n}^{2}|).
\]
Since
\begin{align*}
&\Big\|\max_{1\le j\le n}\big|V_{n,j}^\omega\cdot \alpha_n^2(\omega)-\alpha_j^2(\omega)\big|\Big\|_{L^{q/2}(\mu_\omega)}\\
=&\Big\|\max_{1\le j\le n}\big|\sum_{i=1}^{j}\E_{\mu_\omega}(\psi_{\sigma^{n-i} \omega}^2\circ F_\omega^{n-i}|( F_{\omega}^{n-(i-1)})^{-1}\cal B_{\sigma^{n-(i-1)} \omega})-\sum_{i=1}^{j}\E_{\mu_\omega}(\psi_{\sigma^{n-i} \omega}^2\circ F_\omega^{n-i})\big|\Big\|_{L^{q/2}(\mu_\omega)}\\\nonumber
=&\Big\|\max_{1\le j\le n}\big|\sum_{i=1}^{j}\E_{\mu_\omega}\big(\psi_{\sigma^{n-i} \omega}^2\circ F_\omega^{n-i}-\E_{\mu_\omega}(\psi_{\sigma^{n-i} \omega}^2\circ F_\omega^{n-i})\big|( F_{\omega}^{n-(i-1)})^{-1}\cal B_{\sigma^{n-(i-1)} \omega}\big)\big|\Big\|_{L^{q/2}(\mu_\omega)}\\\nonumber
=&\Big\|\max_{1\le j\le n}\big|\sum_{i=1}^{j}\Big[P_{\sigma^{n-i}\omega}\big(\psi_{\sigma^{n-i}\omega}^{2}-\int\psi_{\sigma^{n-i}\omega}^{2}
d\mu_{\sigma^{n-i}\omega}\big)\Big]\circ F_{\omega}^{n-(i-1)}\big|
\Big\|_{L^{q/2}(\mu_\omega)}\\\nonumber
=&\Big\|\max_{1\le j\le n}\big|\sum_{i=1}^{j}\breve{\phi}_{\sigma^{n-i}\omega}\circ F_{\omega}^{n-i}\big|
\Big\|_{L^{q/2}(\mu_\omega)}\\\nonumber
\le&C_{\omega,\psi,q} n^{\frac{1}{2}},
\end{align*}
where the last inequality follows from Corollary~\ref{smb}. Hence
\begin{align}\label{lk}
&\nonumber\Big\|\sup_{t\in[0,1]}|l-k|\Big\|_{L^{q/2}(\mu_\omega)}\\
\le& \nonumber C\max_{1\le i\le n}\int\psi_{\sigma^{n-i}\omega}^2\circ  F_\omega^{n-i} \rmd\mu_\omega+C\Big\|\max_{1\le j\le n}\big|V_{n,j}^\omega\cdot \alpha_n^2(\omega)-\alpha_j^2(\omega)\big|\Big\|_{L^{q/2}(\mu_\omega)}\\
= &O_\omega(n^{\frac{2}{q}})+O_\omega(n^{\frac{1}{2}})=O_\omega(n^{\frac{1}{2}}).
\end{align}
It follows from Proposition~\ref{Zn} and \eqref{lk} that
\begin{align*}
&\Big\|\sup_{t\in[0,1]}|\widehat M_n^\omega(t)-X_n^\omega(t)|\Big\|_{L^{q/4}(\mu_\omega)}\\
\le&\frac{1}{\eta_n(\omega)}\Big\|\sup_{t\in [0,1]}\big|\sum_{i=k+1}^{l}\psi_{\sigma^{n-i} \omega}\circ F_\omega^{n-i}\big|\Big\|_{L^{q/4}(\mu_\omega)}\\
\le&\frac{1}{\eta_n(\omega)}\Big\|Z_n^\omega\big(\sup_{t\in [0,1]}|l-k|n^{-1/2}+3\big)\Big\|_{L^{q/4}(\mu_\omega)}\\
\le&\frac{1}{\eta_n(\omega)}\big\|Z_n^\omega\big\|_{L^{q/2}(\mu_\omega)}\big(n^{-1/2}\big\|\sup_{t\in [0,1]}|l-k|\big\|_{L^{q/2}(\mu_\omega)}+3\big)\\
\le &C_{\omega,\psi,q} n^{-\frac{1}{4}+\frac{1}{2q}}.
\end{align*}

On the other hand, we note that
\begin{align*}
&g\circ \overline M_{n}^\omega(t)=\overline M_n^\omega(1)-\overline M_n^\omega(1-t)\\
&=\frac{1}{\eta_n(\omega)}\sum_{i=0}^{n-1}\psi_{\sigma^i\omega}\circ  F_{\omega}^i-\frac{1}{\eta_n(\omega)}\sum_{i=0}^{r_n^\omega(1-t)-1}\psi_{\sigma^i\omega}\circ  F_{\omega}^i+E_n^\omega(t)\\
&=\frac{1}{\eta_n(\omega)}\sum_{i=r_n^\omega(1-t)}^{n-1}\psi_{\sigma^i\omega}\circ  F_{\omega}^i+E_n^\omega(t)\\
&=\frac{1}{\eta_n(\omega)}\sum_{i=1}^{n-r_n^\omega(1-t)}\psi_{\sigma^{n-i}\omega}\circ F_{\omega}^{n-i}+E_n^\omega(t),
\end{align*}
where $\|\sup_{t\in[0,1]}E_n^\omega(t)\|_{L^{q}(\mu_\omega)}\le \eta_n^{-1}(\omega)\|\max_{0\le i\le n-1}|\psi_{\sigma^i\omega}\circ  F_{\omega}^i|\|_{L^{q}(\mu_\omega)}\le Cn^{-\frac{1}{2}+\frac{1}{q}}$.

Comparing $n-r_n^\omega(1-t)$ with $l=l_{n,t}^\omega$, we can find that
\[\eta_{r_n^\omega(1-t)-1}^2(\omega)<(1-t)\eta_n^2(\omega)\le\eta_{r_n^\omega(1-t)}^2(\omega).\]
Since $\eta_n^2(\omega)=\alpha_n^2(\omega)$, we have
\[\alpha_n^2(\omega)-\alpha_{n-r_n^\omega(1-t)+1}^2(\omega)<(1-t)\alpha_n^2(\omega)\le \alpha_n^2(\omega)-\alpha_{n-r_n^\omega(1-t)}^2(\omega),\]
i.e.
\[\alpha_{n-r_n^\omega(1-t)}^2(\omega)\le t\alpha_n^2(\omega)<\alpha_{n-r_n^\omega(1-t)+1}^2(\omega).\]
By the definition of $l_{n,t}^\omega$, we also have
$\alpha_{l}^2(\omega)\le t\alpha_n^2(\omega)<\alpha_{l+1}^2$. So $l_{n,t}^\omega=n-r_n^\omega(1-t)$.
Hence
\[
\Big\|\sup_{t\in[0,1]}|g\circ \overline M_{n}^\omega(t)-\widehat M_n^\omega(t)|\Big\|_{L^{q}(\mu_\omega)}\le C\frac{1}{\eta_n(\omega)}\Big\|\max_{0\le i\le n-1 }|\psi_{\sigma^i\omega}\circ F_\omega^i|\Big\|_{L^{q}(\mu_\omega)}\le Cn^{-\frac{1}{2}+\frac{1}{q}}.
\]

Combining the above estimates, by the definition of Wasserstein distance, we obtain that for a.e. $\omega\in \Omega$ and $n\ge1$,
\begin{align*}
\mathcal{W}_{\frac{q}{4}}(g\circ \overline M_{n}^\omega, X_{n}^\omega)&\le \mathcal{W}_{q}(g\circ \overline M_{n}^\omega,
\widehat M_n^\omega)+\mathcal{W}_{\frac{q}{4}}(\widehat M_n^\omega, X_{n}^\omega)\\
&\le Cn^{-\frac{1}{2}+\frac{1}{q}}+Cn^{-\frac{1}{4}+\frac{1}{2q}}\le Cn^{-\frac{1}{4}+\frac{1}{2q}}.
\end{align*}

\end{proof}

\begin{proof}[Proof of Theorem \ref{thnon2}]
Recall that $g:C[0,1]\rightarrow C[0,1]$ is a continuous transformation defined by $g(u)(t)=u(1)-u(1-t)$. we note that $g\circ g= Id$ and $g$ is the Lipschitz with Lip$g$ $\le 2$. It follows from the Lipschitz mapping theorem that for a.e. $\omega\in \Omega$,
\[
\mathcal{W}_{\frac{q}{4}}(\overline M_{n}^\omega,B)= \mathcal{W}_{\frac{q}{4}}(g(g\circ \overline M_{n}^\omega),g(g\circ B))\le 2\mathcal{W}_{\frac{q}{4}}(g\circ \overline M_{n}^\omega,g\circ B).
\]
Since $g(B)=_d B$, by Lemmas \ref{xnw} and \ref{wnxn}, for a.e. $\omega\in \Omega$ and $q\ge 4$, we have
\begin{align*}
&\mathcal{W}_{\frac{q}{4}}(g\circ \overline M_{n}^\omega,g\circ B)\le\mathcal{W}_{\frac{q}{4}}(g\circ \overline M_{n}^\omega,   X_n^\omega)+\mathcal{W}_{\frac{q}{2}}( X_n^\omega, B)\\
&\le C n^{-\frac{1}{4}+\frac{1}{2q}}+C n^{-\frac{1}{4}+\varepsilon}\le C n^{-\frac{1}{4}+\frac{1}{2q}},
\end{align*}
where the last inequality holds because $\varepsilon>0$ can be taken arbitrarily small.

Finally, by Lemma~\ref{wmn}, we conclude that
\begin{align*}
\mathcal{W}_{\frac{q}{4}}(\overline W_{n}^\omega,B)&\le \mathcal{W}_{q}(\overline W_{n}^\omega,\overline M_n^\omega)+\mathcal{W}_{\frac{q}{4}}(\overline M_{n}^\omega,B)\\
&\le Cn^{-\frac{1}{4}+\frac{1}{2q}}+Cn^{-\frac{1}{4}+\frac{1}{2q}}\le Cn^{-\frac{1}{4}+\frac{1}{2q}}.
\end{align*}
\end{proof}

\section{Examples}

In this section, we introduce the following random dynamical systems (RDS) as concrete examples to which
Theorems~\ref{thnon1} and \ref{thnon2} can be applied.
First, we show that Theorems~\ref{thnon1} and \ref{thnon2} can be passed from the RYT to the original RDS.
Let $(X,m,d)$ be a compact manifold with a Riemannian volume $m$ and a Riemannian distance $d$.
Let $f_\omega:X\to X$ be a family of non-singular transformations w.r.t. the Riemannian volume $m$ and define the random orbits $f_\omega^n=f_{\sigma^{n-1}\omega}\circ \cdots\circ f_{\sigma\omega}\circ f_\omega$.
For a.e. $\omega\in \Omega$ and $(x,l)\in \Delta_\omega$, we define a projection $\pi_\omega: \Delta_\omega\to X$ as $\pi_\omega(x,l):=f_{\sigma^{-l}\omega}^{l}(x)$. Then $\pi_\omega$ is a semiconjugacy,
i.e., $f_\omega\circ\pi_\omega=\pi_{\sigma\omega}\circ F_\omega$.
Since $\mu_\omega$ is an absolutely continuous  $F_\omega$-equivariant probability measures on $\Delta_\omega$, then $\nu_\omega:=(\pi_\omega)_*\mu_\omega$ is a family of absolutely continuous $f_\omega$-equivariant probability measures on $\Omega\times X$.

For any H\"older continuous function $\varphi$ on $X$ with a H\"older exponent $\eta\in (0,1]$, define
\[\widetilde\varphi_\omega=\varphi-\int_X \varphi \,\rmd\nu_\omega, \quad \widetilde\varphi(\omega,\cdot)=\widetilde\varphi_\omega(\cdot).\]
The lifted function $\phi_\omega=\widetilde\varphi_\omega\circ \pi_\omega$ with $\phi(\omega,\cdot)=\phi_\omega(\cdot)$ satisfies
\[
\|\phi_\omega\|_{L^\infty(\Delta_\omega)}\le \max_{x\in X}|\varphi(x)|, \quad \int_{\Delta_\omega} \phi_\omega \,\rmd\mu_\omega=0.
\]

\begin{prop}
Assume that there are constants $\beta\in (0,1)$, $C\ge 1$ and a function $K\ge 1$ satisfying \eqref{k} such that  for a.e. $\omega\in \Omega$, any $\Lambda_j(\omega)\in \mathcal P_\omega$, any $x,y\in \Lambda_j(\omega)$, and
$0\le k\le R_\omega|_{\Lambda_j(\omega)}$, the RDS satisfies
\begin{align}\label{exp}
d(f_\omega^{R_\omega}(x),f_\omega^{R_\omega}(y))\ge \beta^{-1}d(x,y),
\end{align}
\begin{align}\label{ktr}
d(f_\omega^k(x),f_\omega^k(y))\le CK_{\sigma^k\omega}d(f_\omega^{R_\omega}(x),f_\omega^{R_\omega}(y)).
\end{align}
Then $\phi\in \mathcal F_{\beta^\eta}^{K^\eta}$ with a Lipschitz constant $C_\varphi C^\eta\big(\sup_{x,y\in X}d(x,y)\big)^\eta (\beta^\eta)^{-1}$.
\end{prop}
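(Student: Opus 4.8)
The plan is to verify the two defining requirements for membership in $\mathcal F_{\beta^\eta}^{K^\eta}$: the uniform sup-bound $\sup_{x\in\Delta_\omega}|\phi_\omega(x)|\le M_\phi$, which has already been recorded above with $M_\phi=\max_{x\in X}|\varphi(x)|$, and the modulus estimate $|\phi_\omega(\xi)-\phi_\omega(\zeta)|\le C_\phi K_\omega^\eta(\beta^\eta)^{s_\omega(\xi,\zeta)}$ for $\xi,\zeta\in\Delta_\omega$, with $C_\phi=C_\varphi C^\eta(\sup_{x,y\in X}d(x,y))^\eta(\beta^\eta)^{-1}$. In passing one checks that $K^\eta$ still satisfies \eqref{k}, since $\P\{K_\omega^\eta>n\}=\P\{K_\omega>n^{1/\eta}\}\le e^{-un^{v/\eta}}\le e^{-un^v}$ (using $\eta\le1$) and $\inf_\omega K_\omega^\eta\ge1$ because $K\ge1$; also $\beta^\eta\in(0,1)$.

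First I would dispose of the trivial cases of the modulus estimate. When $s_\omega(\xi,\zeta)=\infty$, the weak-expansion property (P3) forces $\xi=\zeta$ and nothing is to be proved. When $s_\omega(\xi,\zeta)=0$, i.e.\ $\xi$ and $\zeta$ lie in different atoms of $\mathcal P_\omega$, the factor $(\beta^\eta)^{s_\omega}$ equals $1$; since $\int_X\varphi\,\rmd\nu_\omega$ depends only on $\omega$ it cancels in $\phi_\omega(\xi)-\phi_\omega(\zeta)=\varphi(\pi_\omega\xi)-\varphi(\pi_\omega\zeta)$, so H\"older continuity of $\varphi$ gives $|\phi_\omega(\xi)-\phi_\omega(\zeta)|\le C_\varphi(\sup_{x,y\in X}d(x,y))^\eta$, which is $\le C_\phi K_\omega^\eta$ because $C\ge1$, $\beta\in(0,1)$ and $K_\omega\ge1$. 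It then remains to treat $\xi=(x,l)$, $\zeta=(y,l)$ lying in a common $\Lambda_j(\sigma^{-l}\omega)$, with $s:=s_\omega(\xi,\zeta)\ge1$.

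The core of the proof is a telescoped backward-contraction estimate along successive returns. Writing $\tilde\omega:=\sigma^{-l}\omega$, I would set, for $1\le r\le s$, $F_\omega^{R_\omega^r}(x,l)=(x_r,0)\in\Delta_{\sigma^{R_\omega^r}\omega}$ and $F_\omega^{R_\omega^r}(y,l)=(y_r,0)$, so $x_r,y_r\in\Lambda$ and $x_{r+1},y_{r+1}$ are the first-return images of $x_r,y_r$ under $f_{\sigma^{R_\omega^r}\omega}$. By the very definition of $s_\omega$, for each $1\le r\le s-1$ the points $(x_r,0)$ and $(y_r,0)$ sit in one atom of $\mathcal P_{\sigma^{R_\omega^r}\omega}$, hence in a common $\Lambda_k(\sigma^{R_\omega^r}\omega)$; the hypothesis $d(f_\omega^{R_\omega}x,f_\omega^{R_\omega}y)\ge\beta^{-1}d(x,y)$ (valid for $x,y$ in a common atom of $\mathcal P_\omega$), applied with $\omega$ replaced by $\sigma^{R_\omega^r}\omega$, gives $d(x_r,y_r)\le\beta\,d(x_{r+1},y_{r+1})$, and iterating yields $d(x_1,y_1)\le\beta^{s-1}d(x_s,y_s)\le\beta^{s-1}\sup_{x,y\in X}d(x,y)$. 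On the other hand, $(x,l)\in\Delta_\omega$ forces $0\le l\le R_{\tilde\omega}|_{\Lambda_j(\tilde\omega)}$, so hypothesis \eqref{ktr} with driving point $\tilde\omega$, exponent $k=l$ and $\sigma^l\tilde\omega=\omega$ gives
\[
d\big(\pi_\omega(x,l),\pi_\omega(y,l)\big)=d\big(f_{\tilde\omega}^{\,l}(x),f_{\tilde\omega}^{\,l}(y)\big)\le C K_\omega\,d\big(f_{\tilde\omega}^{R_{\tilde\omega}}(x),f_{\tilde\omega}^{R_{\tilde\omega}}(y)\big)=C K_\omega\,d(x_1,y_1).
\]
Combining these two bounds with the H\"older continuity of $\varphi$ (the $\omega$-dependent integral again cancelling) produces
\[
|\phi_\omega(x,l)-\phi_\omega(y,l)|\le C_\varphi\big(C K_\omega\beta^{s-1}\sup_{x,y\in X}d(x,y)\big)^\eta=\Big[C_\varphi C^\eta\big(\sup_{x,y\in X}d(x,y)\big)^\eta(\beta^\eta)^{-1}\Big]K_\omega^\eta(\beta^\eta)^{s},
\]
which is exactly the asserted estimate, and together with the sup-bound gives $\phi\in\mathcal F_{\beta^\eta}^{K^\eta}$ with the stated Lipschitz constant.

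I expect the only real obstacle to be the bookkeeping: keeping the shifts $\tilde\omega=\sigma^{-l}\omega$ and $\sigma^{R_\omega^r}\omega$ straight along the orbit, replacing (as usual) the a.e.\ sets of validity of \eqref{exp}--\eqref{ktr} by their $\sigma$-invariant hull so that they hold along the whole orbit, and --- the genuinely conceptual point --- recognising that $s=s_\omega(\xi,\zeta)$ is exactly the number of consecutive return steps on which the two orbits remain in a common partition atom, hence precisely the number of times the backward-contraction inequality may be invoked. Once this dictionary between the tower's separation time and the geometry on $X$ is set up, the remaining estimate is the geometric telescoping above and presents no analytic difficulty.
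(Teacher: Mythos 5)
Your proposal is correct and follows essentially the same route as the paper: telescope the backward-contraction inequality \eqref{exp} along the $s-1$ successive returns on which the two orbits share a partition atom to get $d(x_1,y_1)\le\beta^{s-1}\sup_{x,y\in X}d(x,y)$, control the projection by \eqref{ktr}, and finish with the H\"older continuity of $\varphi$, yielding the same constant $C_\varphi C^\eta(\sup d)^\eta(\beta^\eta)^{-1}$. Your additional checks (the cases $s=0$ and $s=\infty$, and that $K^\eta$ still satisfies \eqref{k}) are routine details the paper leaves implicit.
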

\begin{proof}
Let $A=\sup_{x,y\in X}d(x,y)$. By \eqref{exp}, for any $(x,l), (y,l)\in \Delta_\omega$ with $s_\omega((x,l), (y,l))=n$, we have
\[
d(x,y)\le \beta d(f_\omega^{R_\omega}(x),f_\omega^{R_\omega}(y))\le\cdots\le \beta^nd(f_\omega^{R_\omega^n}(x),f_\omega^{R_\omega^n}(y))\le \beta^n A.
\]
Thus, for any $(x,l), (y,l)\in \Delta_\omega$ with $s_\omega((x,l), (y,l))=n$,
\begin{align*}
&|\phi_\omega(x,l)-\phi_\omega(y,l)|=|\widetilde\varphi_\omega(f_{\sigma^{-l}\omega}^{l}x)-\widetilde\varphi_\omega(f_{\sigma^{-l}\omega}^{l}y)|\\
\le& C_\varphi d(f_{\sigma^{-l}\omega}^{l}x,f_{\sigma^{-l}\omega}^{l}y)^\eta
\le C_\varphi C^\eta K_\omega^\eta d(f_{\sigma^{-l}\omega}^{R_{\sigma^{-l}\omega}}x,f_{\sigma^{-l}\omega}^{R_{\sigma^{-l}\omega}}y)^\eta\\
\le& C_\varphi C^\eta K_\omega^\eta\big(\beta^{s_\omega((x,l), (y,l))-1}A\big)^\eta\\
=&C_\varphi C^\eta A^\eta (\beta^\eta)^{-1} K_\omega^\eta( \beta^\eta)^{s_\omega((x,l), (y,l))}.
\end{align*}
\end{proof}

Now, considering the original RDS that satisfies \eqref{exp}, \eqref{ktr} and admits a RYT satisfying the conditions (P1)-(P8), we show that Theorem~\ref{thnon1} and \ref{thnon2} holds for the RDS. Note that
\[
\int(\sum_{i=0}^{n-1}\widetilde\varphi_{\sigma^i\omega}\circ f_\omega^i)^2\rmd\nu_\omega=\int(\sum_{i=0}^{n-1}\phi_{\sigma^i\omega}\circ F_\omega^i)^2\rmd\mu_\omega.
\]
Consider the continuous process $W_{n}^{\omega}\in C[0,1]$ defined by
\[
W_{n}^{\omega}(t)=\frac{1}{\sqrt n}\Big[\sum_{i=0}^{[nt]-1}\widetilde\varphi_{\sigma^i\omega}\circ f_{\omega}^{i}+(nt-[nt])\widetilde\varphi_{\sigma^{[nt]}\omega}\circ f_{\omega}^{[nt]}\Big],\quad t\in[0,1].
\]
Hence,\\
(1) There is  a constant $\Sigma^2\ge 0$ such that
\[
\lim_{n\to\infty}\frac{\int(\sum_{i=0}^{n-1}\widetilde\varphi_{\sigma^i\omega}\circ f_\omega^i)^2\rmd\nu_\omega}{n}=\lim_{n\to\infty}\frac{\int(\sum_{i=0}^{n-1}\phi_{\sigma^i\omega}\circ F_\omega^i)^2\rmd\mu_\omega}{n}=\Sigma^2.
\]
(2) If $\Sigma^2=0$,  then $\widetilde\varphi$ is a coboundary, i.e. there exists $g\in L^q(\Omega\times X,\nu)$ such that for a.e. $\omega\in \Omega$,
\[\widetilde\varphi_{\omega}=g_{\sigma\omega}\circ f_\omega-g_\omega\quad \nu_\omega\hbox{-a.s.}\]
Namely,
\[
\widetilde\varphi=g\circ T-g,\quad \nu\hbox{-a.s.}
\]
Here $T(\omega,x)=(\sigma\omega,f_\omega(x))$ is the skew product.
It follows from the same arguments on projection for coboundary in \cite[Theorem~6.1]{Su22}, which used Theorem~1.1 in \cite{L96} to apply to the skew product $T$ and the observable $\widetilde\varphi$.\\
(3) If $\Sigma^2>0$, then for a.e. $\omega\in \Omega$, $W_n^\omega\to_w W$ in $C[0,1]$, where $W$ is a Brownian motion with variance $\Sigma^2>0$. Indeed,
$W_n^\omega=_d W_n^\omega\circ\pi_\omega.$

Similarly, consider
\begin{align*}
\widetilde W_{n}^{\omega}(t):=\frac{1}{\Sigma_n(\omega)}\Big[\sum_{i=0}^{N_n^\omega(t)-1}\widetilde\varphi_{\sigma^i\omega}\circ f_{\omega}^{i}+\frac{t\Sigma_n^2-\Sigma_{N_n^{\omega}(t)-1}^2}{\Sigma_{N_n^{\omega}(t)}^2-\Sigma_{N_n^\omega(t)-1}^2}\widetilde\varphi_{\sigma^{N_n^\omega(t)}\omega}\circ f_{\omega}^{N_n^\omega(t)}\Big],\quad t\in[0,1],
\end{align*}
where $N_n^\omega$ is defined as \eqref{Nt}. Then Theorem~\ref{thnon2} can be applied to the RDS, that is,
\[
\mathcal{W}_{\frac{q}{4}}(\widetilde W_{n}^{\omega},B)=\mathcal{W}_{\frac{q}{4}}(\widetilde W_{n}^{\omega}\circ\pi_\omega,B)=O(n^{-\frac{1}{4}+\frac{1}{2q}}).
\]

We now apply these results to specific examples. Our goal is to verify that the following RDS satisfy \eqref{exp}, \eqref{ktr},
and admit a RYT satisfying the conditions (P1)--(P8).
Specifically, we will consider i.i.d. perturbations of Viana maps, i.i.d. perturbations of  interval maps of  with a neutral fixed point,
and  small random perturbations of Anosov diffeomorphisms with an ergodic driving system.
It is well established that these systems admit a RYT. Upon construction of the RYT, conditions (P1)--(P3), (P5) and \eqref{exp} are satisfied.
Thus it remains to verify conditions (P4), (P6)--(P8) and \eqref{ktr}.

\begin{exam}[i.i.d. translations of Viana maps]
Consider the random perturbations of Viana maps \cite{AA03, AV13}. It was pointed out in Section~5.2.2 of \cite{AV13} that the return times have stretched exponential tails. So the property (P4) holds. By Remark~\ref{com}, we know that (P8) holds without (P6) and (P7). The condition \eqref{ktr} follows from Proposition~4.9 of \cite{AV13}. Hence, for a.e. $\omega\in \Omega$ and for any  $\delta>0$, $q\ge 4$, we have $\mathcal{W}_{\frac{q}{4}}(\widetilde W_{n}^{\omega},B)=O(n^{-\frac{1}{4}+\delta})$ for all
$n\ge 1$.
\end{exam}

\begin{exam}[i.i.d. perturbations of  interval maps with a neutral fixed point]
Our setting is same as that in \cite{BBR19}, where the authors considered a random family of the intermittent  maps introduced in \cite{LSV} as an application.
They constructed a random Young tower and verified conditions (P4), (P6)--(P7), which in turn implies that (P8) holds;
see Section 5 of \cite{BBR19} for details.
Since the derivative of such maps is bounded below by $1$, it follows that
\[
d(f_\omega^k(x),f_\omega^k(y))\le d(f_\omega^{R_\omega}(x),f_\omega^{R_\omega}(y)),
\]
and thus condition \eqref{ktr} is satisfied.
Let $\Omega=[\alpha_0,\alpha_1]^{\Z}$, where $0<\alpha_0<1/5$, and $\alpha_1<1$.
Let $\delta$ be small s.t. $q=\frac{(\frac{1}{\alpha_0}-1-\alpha)\alpha}{\alpha+1}\ge4$.
Then for a.e. $\omega\in \Omega$ and any H\"older function $\varphi$, we obtain the Wasserstein convergence rate
$\mathcal{W}_{\frac{q}{4}}(\widetilde W_{n}^{\omega},B)=O(n^{-\frac{1}{4}+\frac{1}{2q}})$ for all $n\geq 1$.
This result also applies to the examples discussed in \cite{MR24}.
\end{exam}

\begin{exam}[small random perturbations of Anosov diffeomorphisms with an ergodic driving system]
Let $X$ be a smooth compact connected Riemannian manifold of finite dimension and let $T$ be a topologically transitive Anosov map of
class $\mathcal C^{r+1}$ with $r>2$.
Consider a small neighborhood $\mathcal U$ of $T$ in the  $\mathcal C^{r+1}$ topology, and
let $\sigma:\Omega\to \Omega$ be an ergodic automorphism of the probability space $(\Omega,\mathbb P)$.
It follows from \cite{ABR22} that the corresponding RDS admits a random hyperbolic tower, as defined in \cite{ABR22}, with exponential tails.
From this structure, we obtain a quotient random tower that satisfies (P1)-(P5).
Based on the proofs of Theorems \ref{thnon1} and \ref{thnon2}, we observe that (P8) is crucial.
If (P8) holds, the results still follow even in the absence of conditions (P6) and (P7).
In \cite{DH21}, the authors proved (P8) with $C_\omega=1$ using spectral methods.
Moreover, condition (7.2) is a direct consequence of \cite[Proposition~3.3]{ABR22}.
Hence, for a.e. $\omega\in \Omega$ and for any  $\delta>0$, $q\ge 4$, we have $\mathcal{W}_{\frac{q}{4}}(\widetilde W_{n}^{\omega},B)=O(n^{-\frac{1}{4}+\delta})$ for all
$n\ge 1$.
\end{exam}
\appendix

\section{}

\subsection{Quenched invariance principle for reverse martingale}

Consider the random dynamical systems $(\Omega, \P, \sigma, (\Delta_\omega)_{\omega\in\Omega}, (\mu_\omega)_{\omega\in\Omega},(F_\omega)_{\omega\in\Omega})$. Let $q\ge4$. Suppose that $\psi\in L^q(\Delta, \mu)$ with $\psi(\omega,\cdot)=\psi_\omega(\cdot)$, $\int \psi_\omega \rmd\mu_\omega=0$ and $\psi_\omega\in {\rm ker} P_\omega$.

For a.e. $\omega\in \Omega$ and $t\in [0,1]$, consider
\[M_n^\omega(t)=\frac{1}{\sqrt n}\Big(\sum_{i=0}^{[nt]-1}\psi_{\sigma^i\omega}\circ F_\omega^i+(nt-[nt])\psi_{\sigma^{[nt]}\omega}\circ F_\omega^{[nt]}\Big),\quad t\in [0,1].\]

\begin{thm}
Suppose that there exists a constant $\Sigma^2>0$ such that for a.e. $\omega\in \Omega$ and each $t\in[0,1]$,
\[
\frac{1}{n}\sum_{i=0}^{[nt]-1}[P_{\sigma^i\omega}(\psi_{\sigma^i\omega}^2)]\circ F_\omega^{i+1}\to_{w}t\Sigma^2, \quad \hbox{~as~}n\to \infty.
\]
Then for a.e. $\omega\in \Omega$, $M_n^\omega\to_{w}W$ in $C[0,1]$, where $W$ is a Brownian motion with variance $\Sigma^2$.
\end{thm}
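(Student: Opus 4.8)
The plan is to reduce the statement, via a time reversal, to the classical functional central limit theorem for martingale difference arrays. First I would fix $n\ge1$ and, for a.e.\ $\omega\in\Omega$, relabel the observable by setting $D_{n,j}:=\psi_{\sigma^{n-j}\omega}\circ F_\omega^{n-j}$ and $\mathcal G_{n,j}:=(F_\omega^{n-j})^{-1}\mathcal B_{\sigma^{n-j}\omega}$ for $1\le j\le n$, together with $\mathcal G_{n,0}:=(F_\omega^{n})^{-1}\mathcal B_{\sigma^{n}\omega}$; since $(F_\omega^{k})^{-1}\mathcal B_{\sigma^{k}\omega}$ is decreasing in $k$, the array $(\mathcal G_{n,j})_{0\le j\le n}$ is a filtration. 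Because $\psi_\omega\in{\rm ker}\,P_\omega$, applying \eqref{cvm} to $\psi$ and to $\psi^{2}$ gives, for $1\le j\le n$,
\[
\E_{\mu_\omega}\big(D_{n,j}\,\big|\,\mathcal G_{n,j-1}\big)=\big[P_{\sigma^{n-j}\omega}\psi_{\sigma^{n-j}\omega}\big]\circ F_\omega^{n-j+1}=0,\qquad \E_{\mu_\omega}\big(D_{n,j}^{2}\,\big|\,\mathcal G_{n,j-1}\big)=\big[P_{\sigma^{n-j}\omega}(\psi_{\sigma^{n-j}\omega}^{2})\big]\circ F_\omega^{n-j+1},
\]
so $\{D_{n,j},\mathcal G_{n,j}\}_{1\le j\le n}$ is a square-integrable martingale difference array on $(\Delta_\omega,\mu_\omega)$. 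Writing $R_{n,l}:=\sum_{j=1}^{l}D_{n,j}$ and using the identity $\sum_{i=0}^{[nt]-1}\psi_{\sigma^i\omega}\circ F_\omega^i=R_{n,n}-R_{n,n-[nt]}$, a short computation (distinguishing whether $nt\in\Z$) shows that the polygonal process $\widehat R_n(s):=n^{-1/2}\big(R_{n,[ns]}+(ns-[ns])D_{n,[ns]+1}\big)$ satisfies $M_n^\omega=\Theta(\widehat R_n)$, where $\Theta:C[0,1]\to C[0,1]$, $\Theta(u)(t):=u(1)-u(1-t)$, is continuous and $2$-Lipschitz, with $\Theta\circ\Theta=\mathrm{Id}$ on $\{u\in C[0,1]:u(0)=0\}$. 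Hence it suffices to prove $\widehat R_n\to_w\Sigma B$ in $C[0,1]$ for a standard Brownian motion $B$, since then $M_n^\omega=\Theta(\widehat R_n)\to_w\Theta(\Sigma B)$ and $t\mapsto\Sigma B(1)-\Sigma B(1-t)$ is a Brownian motion with variance $\Sigma^{2}$.

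Next I would verify the hypotheses of the martingale functional CLT for the array $(D_{n,j})$. For the conditional-variance condition, set $\langle R\rangle_{n,l}:=\sum_{j=1}^{l}\E_{\mu_\omega}(D_{n,j}^{2}\mid\mathcal G_{n,j-1})$; by the identity above, $\sum_{i=0}^{[nt]-1}[P_{\sigma^i\omega}(\psi_{\sigma^i\omega}^{2})]\circ F_\omega^{i+1}=\langle R\rangle_{n,n}-\langle R\rangle_{n,n-[nt]}$, so the assumption says exactly that $n^{-1}(\langle R\rangle_{n,n}-\langle R\rangle_{n,n-[nt]})$ converges in $\mu_\omega$-probability to $t\Sigma^{2}$ for every $t$ (weak convergence to a constant being convergence in probability). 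Taking $t=1$ gives $n^{-1}\langle R\rangle_{n,n}\to\Sigma^{2}$, whence, subtracting, $n^{-1}\langle R\rangle_{n,n-[nt]}\to(1-t)\Sigma^{2}$; replacing the index $n-[nt]$ by $[n(1-t)]$ changes $\langle R\rangle$ by at most one term $\E_{\mu_\omega}(D_{n,j}^{2}\mid\mathcal G_{n,j-1})$, which is negligible compared with $n$ because $\big\|\max_{1\le j\le n}\E_{\mu_\omega}(D_{n,j}^{2}\mid\mathcal G_{n,j-1})\big\|_{L^{q/2}(\mu_\omega)}=O_\omega(n^{2/q})$, obtained by bounding the conditional expectation by its conditional $L^{q/2}$-norm, summing, and applying Birkhoff's ergodic theorem together with $\E\,\|\psi_\omega^{2}\|_{L^{q/2}(\mu_\omega)}\le(\E\int|\psi_\omega|^{q}\rmd\mu_\omega)^{2/q}<\infty$. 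Thus $n^{-1}\langle R\rangle_{n,[ns]}\to s\Sigma^{2}$ in $\mu_\omega$-probability for every $s\in[0,1]$. For asymptotic negligibility, Lemma~\ref{me} gives $\big\|\max_{1\le j\le n}|D_{n,j}|\big\|_{L^{q}(\mu_\omega)}=O_\omega(n^{1/q})$, hence $n^{-1}\E_{\mu_\omega}\big(\max_{1\le j\le n}D_{n,j}^{2}\big)=O_\omega(n^{2/q-1})\to0$ since $q\ge4>2$; this delivers the conditional Lindeberg condition and the negligibility of the jumps, so that the limit is continuous and the polygonal and piecewise-constant embeddings share the same weak limit.

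With these verifications, the functional central limit theorem for martingale difference arrays (see \cite{HH80}) yields $\widehat R_n\to_w\Sigma B$ in $C[0,1]$ for a.e.\ $\omega\in\Omega$, and applying the continuous map $\Theta$ gives $M_n^\omega\to_w W$ with $W$ a Brownian motion of variance $\Sigma^{2}$. I expect the main obstacle to be the bookkeeping around the time reversal --- namely, checking that the hypothesis, which is phrased as the convergence of a \emph{tail} sum of conditional variances, translates precisely into the conditional-variance condition for the reversed martingale $(R_{n,l})_l$ --- and ensuring that every moment estimate above is carried out in the quenched (a.e.\ $\omega$) sense through Birkhoff's ergodic theorem rather than through annealed bounds.
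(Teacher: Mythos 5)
Your proposal is correct and follows essentially the same route as the paper: reverse the time index so that $\{\psi_{\sigma^{n-j}\omega}\circ F_\omega^{n-j}\}_j$ becomes a forward martingale difference array, verify the conditional-variance and Lindeberg conditions from the hypothesis and from Lemma~\ref{me} via Birkhoff, and invoke the martingale functional CLT. The only cosmetic difference is that you package the reversal through the exact identity $M_n^\omega=\Theta(\widehat R_n)$ with the $2$-Lipschitz map $\Theta(u)(t)=u(1)-u(1-t)$ (the paper's map $g$ from Section~6), whereas the paper treats finite-dimensional distributions and tightness separately via Billingsley's Theorems~18.1 and~18.2 and transfers the modulus of continuity of the reversed process back to $M_n^\omega$ by hand.
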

\begin{proof}
{\bf Convergence of finite-dimensional distributions.} For a.e. $\omega\in \Omega$, fix $0<t_1<t_2<\cdots<t_k<1$, $c_1,c_2,\cdots,c_k\in \R$.
Define
\[
Z_n^\omega=\sum_{l=1}^{k}c_l(M_n^\omega(t_l)-M_n^\omega(t_{l-1})), \quad Z=\sum_{l=1}^{k}c_l(W(t_l)-W(t_{l-1})).
\]
We aim to show that $Z_n^\omega\to_d Z$. Now, $Z=_dN(0,V)$, where $V=\sum_{l=1}^{k}c_l\Sigma^2(t_l-t_{l-1})$.
We can write
\[
Z_n^\omega=\frac{1}{\sqrt n}\sum_{l=1}^{k}c_l\sum_{i=[nt_{l-1}]}^{[nt_l]-1}\psi_{\sigma^i\omega}\circ F_\omega^i+E_n^\omega=\sum_{j=1}^{[nt_k]}X_{n,j}^\omega+E_n^\omega,
\]
where $X_{n,j}^\omega=n^{-1/2}d_j\psi_{\sigma^{[nt_k]-j}\omega}\circ F_\omega^{[nt_k]-j}$ for appropriate choice of $d_j\in\{c_1,c_2,\cdots,c_k\}$ and  $\|E_n^\omega\|_{L^q(\mu_\omega)}=O_\omega(n^{-1/2+1/q})$. Then $\{X_{n,j}^\omega\}_{1\le j\le [nt_k]}$ is a martingale difference array w.r.t. the filtration $\mathcal G_{n,j}^\omega=(F_\omega^{[nt_k]-j})^{-1}\mathcal B_{\sigma^{[nt_k]-j}\omega}$.

We now apply a CLT for martingale difference arrays. To show $Z_n^\omega\to_d N(0,V)$, by \cite[Theorem~18.1]{B99a}, it suffices to show that\\
(A1) $\sum_{j=1}^{[nt_k]}\E_{\mu_\omega}((X_{n,j}^\omega)^2|\mathcal G_{n,j-1}^\omega)\to_w V$ as $n\to\infty$.\\
(A2) $\lim_{n\to\infty}\sum_{j=1}^{[nt_k]}\E_{\mu_\omega}((X_{n,j}^\omega)^21_{\{|X_{n,j}^\omega|\ge \eps\}})=0$ for all $\eps>0$.\\

By \eqref{cvm}, $\E_{\mu_\omega}((X_{n,j}^\omega)^2|\mathcal G_{n,j-1}^\omega)=n^{-1}\big[P_{\sigma^{[nt_k]-j}\omega}(d_j^2\psi_{\sigma^{[nt_k]-j}\omega}^2)\big]\circ F_\omega^{[nt_k]-j+1}$. Hence
\begin{align*}
&\sum_{j=1}^{[nt_k]}\E_{\mu_\omega}((X_{n,j}^\omega)^2|\mathcal G_{n,j-1}^\omega)\\
=&n^{-1}\sum_{j=1}^{[nt_k]}\big[P_{\sigma^{[nt_k]-j}\omega}(d_j^2\psi_{\sigma^{[nt_k]-j}\omega}^2)\big]\circ F_\omega^{[nt_k]-j+1}\\
=&n^{-1}\sum_{l=1}^{k}\sum_{j=[nt_{l-1}]}^{[nt_l]-1}c_l^2\big[P_{\sigma^{j}\omega}\psi_{\sigma^{j}\omega}^2\big]\circ F_\omega^{j+1}\\
=&\sum_{l=1}^{k}c_l^2\Big[\frac{1}{n}\sum_{j=0}^{[nt_l]-1}\big[P_{\sigma^{j}\omega}\psi_{\sigma^{j}\omega}^2\big]\circ F_\omega^{j+1}-\frac{1}{n}\sum_{j=0}^{[nt_{l-1}]-1}\big[P_{\sigma^{j}\omega}\psi_{\sigma^{j}\omega}^2\big]\circ F_\omega^{j+1}\Big]\\
\to_w&\sum_{l=1}^{k}c_l^2(t_l\Sigma^2-t_{l-1}\Sigma^2)=V,\quad \hbox{~as~}n\to\infty.
\end{align*}
Next, we set $K=\max\{c_1,c_2,\cdots,c_k\}$. Then by the H\"older and Chebyshev inequality, we have
\begin{align*}
&\sum_{j=1}^{[nt_k]}\E_{\mu_\omega}((X_{n,j}^\omega)^21_{\{|X_{n,j}^\omega|\ge \eps\}})\\
\le&\frac{K^2}{n}\sum_{j=1}^{[nt_k]}\big\|\psi_{\sigma^{[nt_k]-j}\omega}\circ F_\omega^{[nt_k]-j}\big\|_{L^4(\mu_\omega)}^2\mu_\omega\Big(|\psi_{\sigma^{[nt_k]-j}\omega}\circ F_\omega^{[nt_k]-j}|>\frac{\eps\sqrt n}{K}\Big)^{1/2}\\
\le&\frac{K^2}{n}\sum_{j=1}^{[nt_k]}\big\|\psi_{\sigma^{[nt_k]-j}\omega}\circ F_\omega^{[nt_k]-j}\big\|_{L^4(\mu_\omega)}^2\Big(\frac{K^4}{(\eps{\sqrt n})^4}\E_{\mu_\omega}\big|\psi_{\sigma^{[nt_k]-j}\omega}\circ F_\omega^{[nt_k]-j}\big|^4\Big)^{1/2}\\
=&\frac{K^{4}}{\eps^{2}n^{2}}\sum_{j=1}^{[nt_k]}\big\|\psi_{\sigma^{[nt_k]-j}\omega}\circ F_\omega^{[nt_k]-j}\big\|_{L^4(\mu_\omega)}^{4}\to 0
\end{align*}
by Birkhoff's ergodic theorem. These show that the finite-distributions converge.

{\bf Tightness.} Since $M_n^\omega(0)=0$, by \cite[Theorem~7.3]{B99a}, proving tightness of $M_n^\omega$ is equivalent to showing that
\begin{align}\label{tight}
\lim_{\delta\to 0}\limsup_{n\to\infty}\mu_\omega\Big(\sup_{0\le s,t\le 1\atop|s-t|\le\delta}|M_n^\omega(t)-M_n^\omega(s)|>\eps\Big)=0\hbox{~for~every~} \eps>0.
\end{align}
Define
\[\widetilde M_n^\omega(t)=\frac{1}{\sqrt n}\Big(\sum_{i=1}^{[nt]}\psi_{\sigma^{n-i}\omega}\circ F_\omega^{n-i}+(nt-[nt])\psi_{\sigma^{n-([nt]+1)}\omega}\circ F_\omega^{n-([nt]+1)}\Big).\]
We claim that the hypothesis in \cite[Theorem~18.2]{B99a} are satisfied. Hence in particular $\widetilde M_n^\omega$ is tight. So \eqref{tight} holds with $M_n^\omega$ replaced by $\widetilde M_n^\omega$.

Let $u_{n,t}\in[0,1]$ be such that $[nu_{n,t}]=n-[nt]$.
Then
\begin{align*}
&\sup_{0\le s,t\le 1\atop|s-t|\le\delta}|M_n^\omega(t)-M_n^\omega(s)|\\
\le&\sup_{0\le s,t\le 1\atop|s-t|\le\delta}|\widetilde M_n^\omega(u_{n,s})-\widetilde M_n^\omega(u_{n,t})|+\frac{2}{\sqrt n}\max_{1\le k\le n}|\psi_{\sigma^{k}\omega}\circ F_\omega^k|\\
=&\sup_{0\le s,t\le 1\atop|s-t|\le\frac{2}{n}+\delta}|\widetilde M_n^\omega(t)-\widetilde M_n^\omega(s)|+\frac{2}{\sqrt n}\max_{1\le k\le n}|\psi_{\sigma^{k}\omega}\circ F_\omega^k|.
\end{align*}
Since
\[
\mu_\omega\big(\max_{1\le k\le n}|\psi_{\sigma^{k}\omega}\circ F_\omega^k|>\eps\sqrt n)=\frac{1}{(\eps\sqrt n)^4}\E_{\mu_\omega}\max_{1\le k\le n}|\psi_{\sigma^{k}\omega}\circ F_\omega^k|^4\to 0
\]
by Birkhoff's ergodic theorem. Hence the tightness of $M_n^\omega$ is obtained.

In the following, we need to verify the hypothesis. Consider the martingale difference arrays $\{X_{n,j}^\omega, \mathcal G_{n,j}^\omega\}$ with $X_{n,j}^\omega=n^{-1/2}\psi_{\sigma^{n-j}\omega}\circ F_\omega^{n-j}$ and $\mathcal G_{n,j}^\omega=(F_\omega^{n-j})^{-1}\mathcal B_{\sigma^{n-j}\omega}$.
It suffices to show that for each $t\in[0,1]$,\\
(A3) $\sum_{j=1}^{[nt_k]}\E_{\mu_\omega}((X_{n,j}^\omega)^2|\mathcal G_{n,j-1}^\omega)\to_w V$ as $n\to\infty$.\\
(A4) $\lim_{n\to\infty}\sum_{j=1}^{[nt_k]}\E_{\mu_\omega}((X_{n,j}^\omega)^21_{\{|X_{n,j}^\omega|\ge \eps\}})=0$ for all $\eps>0$.\\
They are proved in the same way as (A1) and (A2) above; we omit it.
\end{proof}

\section*{Acknowledgements}
Zhenxin Liu is supported by NSFC (Grant 11925102) and Liaoning Revitalization Talents Program (Grant XLYC2202042). Sandro Vaienti thanks the CNRS International Research Laboratory IRL 2032 FAMSI, France Australia, at the Australian National University in Canberra, where this work was completed. Zhe Wang would like to acknowledge support from China Scholarship Council and the warm hospitality of Aix-Marseille University at the Institut de Mathématique de Marseille.


\end{document}